\documentclass[10pt,a4paper]{amsart}
\usepackage{verbatim}
\usepackage{hyperref}
\usepackage[toc]{appendix}
\usepackage[T1]{fontenc}
\usepackage{graphicx}
\usepackage{enumerate}
\usepackage{amsmath,amsfonts,amssymb}
\usepackage{color}
\def\loc{\operatorname{loc}}
\usepackage{cite}
\definecolor{citation}{rgb}{0.11,0.67,0.84}
\definecolor{formula}{rgb}{0.1,0.2,0.6}
\definecolor{url}{rgb}{0.11,0.67,0.84}
\usepackage{pgf,tikz}
\usepackage{mathrsfs}
\usepackage{fancyhdr}
\usepackage{dutchcal}

\usepackage{dsfont}

\newcommand{\reqnomode}{\tagsleft@false}

\vfuzz3pt 
\hfuzz2pt 


\textwidth = 16 cm
\textheight = 21.4 cm 
\oddsidemargin = 0.4cm
\evensidemargin = 0.2cm 
\topmargin = 1mm
\headheight = 0.2 cm
\headsep = 0.7 cm

%
%

\def\dx{\,{\rm d}x}

\def\ds{\,{\rm d}s}
\def\dt{\,{\rm d}t}
\def\dy{\,{\rm d}y}

\def \d{\,{\rm d}}
\def \diver{\,{\rm div}}
\def\dist{\,{\rm dist}}

\def\supp{\,{\rm supp}}
\def\diam{\,{\rm diam}}

\newcommand{\mf}[1]{\mathfrak{#1}}

\allowdisplaybreaks
\makeatletter
\DeclareRobustCommand*{\bfseries}{%
	\not@math@alphabet\bfseries\mathbf
	\fontseries\bfdefault\selectfont
	\boldmath
}

\makeatother

\newlength{\defbaselineskip}
\setlength{\defbaselineskip}{\baselineskip}
\newcommand{\setlinespacing}[1]
{\setlength{\baselineskip}{#1 \defbaselineskip}}
\newcommand{\mint}{\mathop{\int\hskip -1,05em -\, \!\!\!}\nolimits}

\def\XXint#1#2#3{{\setbox0=\hbox{$#1{#2#3}{\int}$}
		\vcenter{\hbox{$#2#3$}}\kern-.5\wd0}}


\newtheorem{theorem}{Theorem}[section]

\newtheorem{remark}{Remark}[section]
\newtheorem{lemma}{Lemma}[section]
\newtheorem{proposition}{Proposition}[section]
\numberwithin{equation}{section}
\setlength{\delimitershortfall}{-0.1pt}
\allowdisplaybreaks[4]

\newcommand{\kk}{\kappa}

\def\er{\mathbb R}

\newcommand{\ti}[1]{\tilde{#1}}

\newcommand\eps\varepsilon

\def\eqn#1$$#2$${\begin{equation}\label#1#2\end{equation}}

\newcommand{\be}{\begin{equation}}

	\newcommand{\ee}{\end{equation}}

\newcommand{\rr}{\varrho}

\newcommand{\snr}[1]{\lvert #1\rvert}

\newcommand{\nr}[1]{\lVert #1 \rVert}

\newcommand{\N}{\mathbb{N}}

\def\name[#1, #2]{#1 #2}

\delimitershortfall=-0.1pt

\title[Borderline global regularity for nonuniformly elliptic systems]{Borderline global regularity for nonuniformly elliptic systems}

\author[De Filippis]{Cristiana De Filippis}  \address{Cristiana De Filippis\\Dipartimento SMFI, Universit\`a di Parma, Viale delle Scienze 53/a, Campus, 43124 Parma, Italy} \email{\url{cristiana.defilippis@unipr.it}}
\author[Piccinini]{Mirco Piccinini}  \address{Mirco Piccinini\\Dipartimento SMFI, Universit\`a di Parma, Viale delle Scienze 53/a, Campus, 43124 Parma, Italy} \email{\url{mirco.piccinini@unipr.it}}
\begin{document}
	
	\subjclass[2020]{35J60, 49J45, 49N60\vspace{1mm}} 
	
	\keywords{Regularity, vectorial problems, nonuniform ellipticity, $(p,q)$-growth\vspace{1mm}}

	\thanks{{\it Acknowledgements.}\ The authors are supported by the INdAM GNAMPA project "Fenomeni non locali in problemi locali", CUP\_E55F22000270001.
		\vspace{1mm}}
	
	\maketitle

	\begin{abstract}
		We establish sharp global regularity results for solutions to nonhomogeneous, nonunifomrly elliptic systems with zero boundary conditions. In particular, we obtain everywhere Lipschitz continuity under borderline Lorentz assumptions on the forcing term, thus positively settling the optimality issue raised in \cite{bdms}.
		
	\end{abstract}
	\vspace{3mm}
	\setcounter{tocdepth}{1}
	
	\setlinespacing{1.08}
 \section{Introduction}\label{si}
 In this paper we prove global Lipschitz continuity and gradient higher differentiability up to the boundary for solutions to possibly degenerate/singular nonuniformly elliptic systems of the form
 \eqn{i.0}
 $$
 -\diver\ a(Du)=f\qquad  \mbox{in} \ \ \Omega,
 $$
 where $\Omega\subset \mathbb{R}^{n}$, $n\ge 3$, is an open, convex domain, the forcing term $f\colon \Omega\to \mathbb{R}^{N}$ satisfies suitable borderline regularity conditions and the (sufficiently smooth) nonlinear tensor $a\colon \mathbb{R}^{N\times n}\to \mathbb{R}^{N\times n}$ is nonuniformly elliptic in the sense that the associated ellipticity ratio
 $$
 \mathcal{R}(z):=\frac{\mbox{highest eigenvalue of} \ \partial a(z)}{\mbox{lowest eigenvalue of} \ \partial a(z)}
 $$
 blows up in correspondence of large values of the gradient variable. Moreover, $a(\cdot)$ features so-called radial "Uhlenbeck" structure in the sense that
 \eqn{i.1}
 $$
 a(z)=\ti{a}(\snr{z})z
 $$
 for some scalar weight function $\ti{a}(\cdot)$. This gives variational structure to \eqref{i.0} in the sense that it coincides with the Euler-Lagrange system of functional
 \begin{flalign}\label{fun}
 \mathcal{F}(w;\Omega):=\int_{\Omega}\left[F(Dw)-f\cdot w\right] \dx,
 \end{flalign}
 with $F(z)\equiv \ti{F}(\snr{z})$ and $\ti{F}(\cdot)$ being a primitive of $t\mapsto \ti{a}(t)t$. Let us enter into the details of the rich literature available on the regularity theory for vectorial problems as \eqref{i.0}. The first interior regularity result for systems the $p$-Laplacian type, i.e. $\ti{a}(t)=t^{p-2}$ in \eqref{i.1}, can be found in Uhlenbeck's seminal paper \cite{uh}, that lays down the foundations of the vectorial counterpart of Ladyzhenskaya \& Ural'tseva landmarking program on quasilinear elliptic equations \cite{LU,Ur}. In this respect, it is worth mentioning that in the genuine vectorial setting there is no hope of getting full regularity, at least for general systems not verifying \eqref{i.1}. In this respect, the first counterexamples were obtained by De Giorgi \cite{dg1} and by Giusti \& Miranda \cite{gimi}, that exhibited strongly elliptic, quadratic systems with discontinuous coefficients and non-smooth solutions. Those findings could lead to the erroneous conclusion that the existence of irregular solutions is due to the way the singularities of coefficients mix up with the components of the gradient variable. Anyway, such irregularity is a purely vectorial phenomenon and appears even when the nonlinear tensor $a(\cdot)$ is smooth, strongly elliptic and depends only on the gradient variable. Starting with these first fundamental results, the matter of regularity for systems driven by the $p$-Laplace operator, or by more general uniformly elliptic structures - meaning that their ellipticity ratio $\mathcal{R}(z)$ stays bounded uniformly with respect to all $z\in \mathbb{R}^{N\times n}$ - received lots of important contributions over the years, see \cite{kumig} for an overview of the most significant advances in the field. In particular, for nonhomogeneous systems it is crucial to establish precise criteria linking the regularity of the datum on the right-hand side to the regularity of solutions in an optimal fashion. For better understanding this issue, let us introduce the Lorentz space $L(n,1)$, that can be characterized as:
$$
w\in L(n,1) \ \Longleftrightarrow \ \nr{w}_{L(n,1)}:=\int_{0}^{\infty}\snr{\{x\in \mathbb{R}^{n}\colon \snr{w(x)}>t\}}^{1/n} \ \dt<\infty.
$$
Stein's related deep outcome \cite{stein} states that 
\eqn{xx.4}
$$
w\in W^{1,n}\ \ \mbox{and} \ \ Dw\in L(n,1) \ \Longrightarrow \ w \ \ \mbox{is continuous},
$$
so combining \eqref{xx.4} and the immersions $L^{n+\varepsilon}\hookrightarrow L(n,1) \hookrightarrow L^{n}$ for all $\varepsilon>0$, lead to the borderline interpretation of $L(n,1)$ as the limiting space with respect to the Sobolev embedding theorem. Implication \eqref{xx.4} has immediately a significant impact on the Poisson equation in connection with standard Calder\'on-Zygmund theory. In fact, it holds that
$$
-\Delta u=f\in L(n,1) \ \Longrightarrow \ Du \ \ \mbox{is continuous},
$$
which is sharp, according to Cianchi's counterexample \cite{CiGA}. It turns out that this result is not related to the linearity of the Poisson equation, nor in general to the specifics of the differential operator governing the system; it is rather a universal feature of elliptic structures. In fact, starting with the groundbreaking results of Duzaar \& Mingione and Kuusi \& Mingione, we see that the same result is true for uniformly elliptic PDE \cite{akm,ba2,dz,dumi2,dumi3,kumi1,kumig,kumi2,tn}, systems of differential forms \cite{sil}, fully nonlinear elliptic equations \cite{bm,dkm} and general systems without Uhlenbeck's structure \cite{by,kumi}. All the aforementioned results hold in the interior of the ambient space domain. Concerning global regularity, sharp results in terms of $L(n,1)$-data have been obtained by Cianchi \& Maz'ya in \cite{cm,cm1} for uniformly elliptic systems and by Kumar \cite{k} for equations in nondivergence form. We further refer to \cite{bl,beck2,bcds,dugrkr,jm,shouhl1} on global everywhere and partial regularity results for systems of the $p$-Laplacian type, to \cite{bcdm,c1,c2,cm2} about gradient higher differentiability in the uniformly elliptic setting and to \cite{dukrmi,km} on the existence of regular boundary points. So far we sketched quite a complete picture of the state of the art for local and global regularity in the uniformly elliptic framework, highlighting in particular that the limiting cases leading to global optimal regularity results are by now well understood. In sharp contrast, the regularity theory for nonuniformly elliptic problems is in large part still open and its treatment requires a different perspective relying on the identification of suitable constraints aimed at slowing down the rate of blow up of the ellipticity ratio to make it quantitatively comparable to the underlying energy. This is the viewpoint originally adopted by Marcellini in a series of seminal papers \cite{ma2,ma4,ma1,ma5}, where he reinterpreted nonuniform ellipticity in terms of so-called $(p,q)$-growth conditions. Precisely, Marcellini considers systems as in \eqref{i.0}-\eqref{i.1} driven by nonlinear tensors $a(\cdot)$ such that
\eqn{i.pq}
$$
\snr{z}^{p-2}\mathds{I}_{N\times n}\lesssim \partial a(z)\lesssim \snr{z}^{q-2}\mathds{I}_{N\times n} \quad \mbox{for} \ \ \snr{z}\ge 1  \ \ \mbox{and} \ \ 1<p\le q.
$$
The above inequality means that the only available bound on the ellipticity ratio is given by $\mathcal{R}(z)\lesssim 1+\snr{z}^{q-p}$, thus naturally suggesting that solutions to system \eqref{i.0} governed by $a(\cdot)$ as in \eqref{i.1} and \eqref{i.pq} are regular only for small values of the difference $q-p$. This is indeed the case: in \cite{ma4,ma1} was proven that necessary and sufficient condition for regular solutions to \eqref{i.0} under \eqref{i.1} and \eqref{i.pq} is that the size of the difference $q-p$ satisfies a restriction of the type
$q-p<\texttt{o}(n)$, where $\texttt{o}(n)\to 0$ as $n\to \infty$. Although Marcellini obtained Lipschitz continuity provided that
\eqn{i.pq1}
$$
0\le q-p<\frac{2p}{n},
$$
constraint later on relaxed by Bella \& Sch\"affner \cite{BS1,BS,s} as
\eqn{i.pq2}
$$
0\le q-p<\frac{2p}{n-1},
$$
the the optimal bound on exponents $(p,q)$ guaranteeing gradient boundedness for solutions to autonomous system as in \eqref{i.0}, \eqref{i.1} and \eqref{i.pq} is still unknown. Since Marcellini's pioneering papers, the regularity theory for functionals with $(p,q)$ growth has been object of intensive investigation, cf. \cite{BS,demi3,sharp,hisc,ko1,s} for an (incomplete) account of recent contributions and \cite{masu2} for a reasonable survey. Concerning nonhomogeneous problems of type \eqref{i.0}-\eqref{fun} or nonuniformly elliptic Stein type theorems, the first interior regularity result has been obtained by Beck \& Mingione \cite{bemi} in the case of autonomous functionals, see also \cite{BS1,bdms,ko} in this respect, \cite{demicz,demi1} for nonautonomous problems and \cite{deqc,ds,ts1} on nonconvex integrals. Let us point out that, apart from scalar cases or very special structures \cite{bo,cdk,djrr,rata,tdp}, global regularity for systems with $(p,q)$-growth is a recent remarkable achievement: in \cite{ko,ko1} Koch obtained higher integrability up to the boundary for free or constrained minima of \eqref{fun}, while B\"ogelein \& Duzaar \& Marcellini \& Scheven \cite{bdms} proved global gradient boundedness for solutions to systems with $(p,q)$-growth and (locally) homogeneous boundary conditions. Let us be more precise: the Lipschitz continuity result in \cite{bdms} holds for solutions to nondegenerate/nonsingular systems as in \eqref{i.0} with $L^{d}$-integrable right-hand side datum $f$ for some $d>n$, under \eqref{i.1}, \eqref{i.pq} and \eqref{i.pq1}. The approach developed in \cite{bdms} relies on Banerjee \& Lewis's technique \cite{bl} that allows dealing with homogeneous boundary conditions prescribed only on a part of the boundary and on Moser iteration that, as pointed out by the authors in \cite[Section 1.2]{bdms}, does not catch the borderline case $f\in L(n,1)$, which is nonetheless the expected one in the light of the results obtained by Cianchi \& Maz'ya \cite{cm1,cm} in the uniformly elliptic setting. There is also room for improving the bound on exponents $(p,q)$: in fact, Koch \cite{ko1} proved global gradient higher integrability for minima of autonomous functionals with \eqref{i.pq2} in force. The contributions to the boundary regularity theory for nonuniformly elliptic problems of our paper are threefold: first, we positively settle the sharpness issue raised in \cite{bdms} on the forcing term $f$ by proving global Lipschitz continuity for solutions to \eqref{i.0} with $f\in L(n,1)$; second, we improve the bound from \eqref{i.pq1} to \eqref{i.pq2}; third, our results cover possibly degenerate or singular systems as well. In fact, the main outcome of this paper is the following
\begin{theorem}
 \label{t1}
 Under assumptions \eqref{om} and \eqref{f.0}-\eqref{ff}, let $u\in W^{1,p}(\Omega,\mathbb{R}^{N})$ be a minimizer of functional \eqref{fun} in its own Dirichlet class. There is a threshold radius $\rr_{*}\equiv \rr_{*}(n,\Omega)>0$ such that if for some $x_{0}\in \partial \Omega$ and $0<\rr\le \rr_{*}$ it is
 $$
 u=0 \ \ \mbox{on} \ \ \partial \Omega\cap B_{\rr}(x_{0}) \ \ \mbox{in the sense of traces},
 $$
 then, whenever $B_{\sigma}(x_{0})\subset B_{\rr}(x_{0})$ are concentric balls with $0<\sigma<\rr$ it is $u\in W^{1,\infty}(\Omega\cap B_{\sigma}(x_{0}),\mathbb{R}^{N})$ with
 \begin{flalign}\label{lipfin}
 \nr{Du}_{L^{\infty}(\Omega\cap B_{\sigma}(x_{0}))}\le& c(\rr-\sigma)^{-a_{1}}\left(\mathcal{F}(u;\Omega\cap B_{\rr}(x_{0}))+\nr{u}_{W^{1,p}(\Omega\cap B_{\rr}(x_{0}))}^{p}+1\right)^{d_{1}}\nonumber \\
 &+c(\rr-\sigma)^{-a_{2}}[f]_{n,1;\Omega\cap B_{\rr}(x_{0})}^{d_{2}},
 \end{flalign}
 where $c\equiv c(n,N,\Omega,\Lambda,p,q)$, $a_{1},a_{2}\equiv a_{1},a_{2}(n,p,q)$ and $d_{1},d_{2}\equiv d_{1},d_{2}(n,p,q)$.
 \end{theorem}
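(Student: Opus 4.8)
The plan is to obtain \eqref{lipfin} through a multi-layer approximation scheme: derive the bound first for smooth solutions of regularized problems on smooth convex subdomains, with constants independent of the regularization, and then pass to the limit. \emph{Regularization.} I would replace $F$ by $C^{2}$, uniformly elliptic integrands $F_{j}$ that agree with $p$-growth from below and $q$-growth from above, augmented by a vanishing-viscosity term of the type $\sigma_{j}(1+\snr{z}^{2})^{q/2}$, so that the approximating minimizers $u_{j}$ are genuinely smooth and free of Lavrentiev-type obstructions; mollify $f$ to $f_{j}:=f*\phi_{j}$ with $[f_{j}]_{n,1;\cdot}\le[f]_{n,1;\cdot}$ and $f_{j}\to f$ in $L^{p}$; and approximate $\Omega$ from the inside by smooth convex domains. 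Minimality together with strict convexity gives $u_{j}\to u$ in $W^{1,p}$ and $\mathcal{F}_{j}(u_{j};\cdot)\to\mathcal{F}(u;\cdot)$, so all subsequent bounds need only be established for the $u_{j}$ with constants independent of $j$.

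\emph{Boundary handling and the core estimate.} Near a point $x_{0}\in\partial\Omega$ where $u=0$ in the trace sense, I would use the Banerjee--Lewis device: differentiate the Euler--Lagrange system of $\mathcal{F}_{j}$ only in directions tangent to $\partial\Omega$, which preserve the homogeneous boundary condition, and recover the remaining normal-normal second derivative algebraically from the equation; convexity of $\Omega$ is precisely what forces the boundary-curvature terms in the tangential Caccioppoli inequality to carry a favorable sign, so they can be discarded (equivalently, one extends $u_{j}$ by odd reflection across the locally graph-like boundary and works on a full ball). The core of the argument is then a second-order Caccioppoli inequality for $V_{p}(Du_{j}):=(\mu^{2}+\snr{Du_{j}}^{2})^{(p-2)/4}Du_{j}$, into which I would insert the Bella--Sch\"affner anisotropic test functions — weights built from powers of $\snr{Du_{j}}$ tuned to the gap $q-p$ — this being exactly the mechanism that upgrades Marcellini's range \eqref{i.pq1} to Bella--Sch\"affner's \eqref{i.pq2}: the constraint $q-p<2p/(n-1)$ is what lets one absorb the bad terms and close the estimate. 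The forcing enters through the term $\int f_{j}\cdot(\cdots)$, to be controlled by $[f_{j}]_{n,1}$ on annuli.

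\emph{Borderline iteration and passage to the limit.} To reach the borderline Lorentz space one must avoid plain Moser iteration — which, as noted for \cite{bdms}, would cost $f\in L^{d}$ with $d>n$. Instead I would run a De Giorgi-type iteration on the super-level sets $\{\snr{Du_{j}}>k\}$ over a dyadic family of shrinking balls centered at $x_{0}$, arranged so that at the $m$-th step the contribution of the datum is estimated by $\snr{B_{r_{m}}}^{1/n}$ times a supremum of $\snr{f_{j}}$ over the corresponding annulus; the geometric sum over $m$ is then controlled by $\int_{0}^{\infty}\snr{\{\snr{f}>t\}}^{1/n}\,\mathrm{d}t=[f]_{n,1}$, which is the quantitative incarnation of Stein's implication \eqref{xx.4} adapted to the nonuniformly elliptic energy. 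This produces the $j$-uniform bound of the shape \eqref{lipfin}. Strong $W^{1,p}_{\loc}$ convergence $u_{j}\to u$ (hence $Du_{j}\to Du$ a.e.\ along a subsequence) transfers the estimate to $u$, and the convergence of energies identifies the right-hand side, so that the exponents $a_{1},a_{2},d_{1},d_{2}$ depend only on $n,p,q$ as claimed.

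\emph{Main obstacle.} I expect the hardest point to be making the Bella--Sch\"affner anisotropic test functions coexist \emph{simultaneously} with the boundary construction and with the $L(n,1)$ iteration: tangential differentiation interacts non-trivially with the degenerate weights, the curvature terms must be dominated purely by convexity without any smoothness of $\partial\Omega$ beyond Lipschitz regularity, and the super-level-set iteration must be organized so that the Lorentz norm of $f$ appears in the summable, scale-invariant form. Threading all three through a single Caccioppoli inequality, uniformly in the regularization parameters, is the delicate part of the proof.
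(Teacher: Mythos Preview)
Your overall strategy matches the paper's: regularize integrand and datum, work on smooth convex domains, derive a Caccioppoli inequality on level sets whose boundary contribution is non-positive by convexity, feed it into a De Giorgi-type iteration that sums to the Lorentz norm of $f$, and pass to the limit. Three points of mechanism deserve correction.

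\emph{Direction of domain approximation.} You propose approximating $\Omega$ from the \emph{inside}, but this does not preserve the homogeneous datum: $u$ does not vanish on an inner boundary $\partial\Omega_{j}\cap B_{\rr}(x_{0})$, so the sign argument for the boundary term collapses and the approximating minimizers need not converge to $u$. The paper approximates from the \emph{outside}, $\Omega\subset\Omega_{\varepsilon}$, extends $u$ by zero to $\mathbb{R}^{n}\setminus\Omega$ and mollifies at scale $\texttt{d}_{\varepsilon}/4$; then $\tilde u_{\varepsilon}$ automatically vanishes on $\partial\Omega_{\varepsilon}\cap B_{\rr}(x_{0})$ and $f_{\varepsilon}$ is compactly supported in $\Omega_{\varepsilon}$, which is exactly what the boundary computation needs.

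\emph{Boundary mechanism.} The paper does not differentiate tangentially and recover $\partial_{\nu\nu}^{2}u$ from the equation. Instead it applies a scalar second-order operator $\mathcal{L}$, built from the Uhlenbeck coefficients, to $\tilde F(\ell_{\tilde\mu}(Dv))$, obtaining the identity $\mathcal{L}(\tilde F(\ell_{\tilde\mu}(Dv)))=\langle\partial a(Dv)D^{2}v,D^{2}v\rangle-\langle Dv,Df\rangle$; after one integration by parts the boundary integral is rewritten via the second-fundamental-form identity \eqref{3.8} and is $\le 0$ by convexity (and vanishes altogether because $\supp f_{\varepsilon}\Subset\Omega_{\varepsilon}$). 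Odd reflection across a flattened boundary is used only to certify that the approximating minimizer is $C^{3}$ up to the boundary, so that the pointwise computation is legitimate; it plays no role in the quantitative estimate.

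\emph{Where Bella--Sch\"affner enters.} It is not an anisotropic test function in the Caccioppoli step. The Caccioppoli inequality \eqref{3.11} is obtained with the ordinary choice $H_{\kappa}(t)=(h(t)-\kappa)_{+}$. The $(n-1)$-dimensional gain comes afterwards, inside the global De Giorgi lemma (Lemma~\ref{i++}): one optimizes over cut-offs via $\inf_{\eta}\int\snr{D\eta}^{2}(w-\kappa)_{+}^{2}$, bounds this by spherical integrals through \eqref{2.3}, and applies Sobolev embedding on the spheres $\partial B_{s}$, producing the exponent $2^{*}_{\mathfrak{s}}$ and hence the range $q-p<2p/(n-1)$. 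The Lorentz norm of $f$ then appears by summing the level-set contributions against the rearrangement $\tilde f^{*}$, as in the paper's Step~2 of that lemma.

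With these adjustments your outline coincides with the paper's proof.
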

The proof of Theorem \ref{t1} shares the same premises of \cite[Theorem 1.1]{bl} and \cite[Theorem 1.1]{bdms}, but instead of deriving gradient boundedness via Moser iteration, we adopt a global De Giorgi type iterative technique in the spirit of those employed in \cite{bemi,BS1,demi1} for local regularity results. The advantage in using De Giorgi's technique instead of Moser iteration lies in the possibility of running the iterative process via dyadic sums rather than using geometric progressions. In the summation procedure the nonhomogeneity $f$ contributes to the construction of a Wolff type potential that embeds in $L(n,1)$, \cite{hm1,kumi1}. As a byproduct of Theorem \ref{t1} we also obtain higher differentiability for suitable nonlinear functions of the gradient. This is the content of
 \begin{theorem}\label{t2}  Under the same assumptions of Theorem \ref{t1}, the following higher differentiability results hold true. If $1<p\le 2$ and $B_{\sigma}(x_{0})\subset B_{\rr}(x_{0})$ are concentric balls with $0<\sigma<\rr\le \rr_{*}$, then $u\in W^{2,2}(\Omega\cap B_{\sigma}(x_{0}),\mathbb{R}^{N})$, $V_{\mu}(Du)\in W^{1,2}(\Omega\cap B_{\sigma}(x_{0}),\mathbb{R}^{N\times n})$ with
     \begin{flalign}\label{t2.1}
     &\nr{DV_{\mu}(Du)}_{L^{2}(\Omega\cap B_{\sigma}(x_{0}))}+ \nr{D^{2}u}_{L^{2}(\Omega\cap B_{\sigma}(x_{0}))}\nonumber\\
     &\qquad \qquad \le c(\rr-\sigma)^{-a}\left(\mathcal{F}(u;\Omega\cap B_{\rr}(x_{0}))+\nr{u}_{W^{1,p}(\Omega\cap B_{\rr}(x_{0}))}^{p}+1\right)^{d_{1}}\left([f]_{n,1;\Omega\cap B_{\rr}(x_{0})}^{d_{2}}+1\right),
     \end{flalign}
     for $c\equiv c(n,N,\Omega,\Lambda,p,q)$, $a\equiv a(n,p,q)$ and $d_{1},d_{2}\equiv d_{1},d_{2}(n,p,q)$.\\
     If $p>2$, then $W_{\mu}(Du)\in W^{1,2}(\Omega\cap B_{\sigma}(x_{0}),\mathbb{R}^{N\times n})$ with
     \begin{flalign}\label{t2.2}
     \nr{DW_{\mu}(Du)}_{L^{2}(\Omega\cap B_{\sigma}(x_{0}))}\le& c(\rr-\sigma)^{-a_{1}}\left(\mathcal{F}(u;\Omega\cap B_{\rr}(x_{0}))+\nr{u}_{W^{1,p}(\Omega\cap B_{\rr}(x_{0}))}^{p}+1\right)^{d_{1}}\nonumber \\
     &+c(\rr-\sigma)^{-a_{2}}[f]_{n,1;\Omega\cap B_{\rr}(x_{0})}^{d_{2}},
     \end{flalign}
     for $c\equiv c(n,N,\Omega,\Lambda,p,q)$, $a_{1},a_{2}\equiv a_{1},a_{2}(n,p,q)$ and $d_{1},d_{2}\equiv d_{1},d_{2}(n,p,q)$ - in particular, if $\mu>0$ in \eqref{f.1}, then $u\in W^{2,2}(\Omega\cap B_{\sigma}(x_{0}),\mathbb{R}^{N})$ and
     \begin{flalign}\label{t2.3}
     &\nr{D^{2}u}_{L^{2}(\Omega\cap B_{\sigma}(x_{0}))}+\nr{DW_{\mu}(u)}_{L^{2}(\Omega\cap B_{\sigma}(x_{0}))} \nonumber\\
     &\qquad \qquad \le c(\rr-\sigma)^{-a_{1}}\left(\mathcal{F}(u;\Omega\cap B_{\rr}(x_{0}))+\nr{u}_{W^{1,p}(\Omega\cap B_{\rr}(x_{0}))}^{p}+1\right)^{d_{1}}+c(\rr-\sigma)^{-a_{2}}[f]_{n,1;\Omega\cap B_{\rr}(x_{0})}^{d_{2}},
     \end{flalign}
     is verified with $c\equiv c(n,N,\Omega,\Lambda,p,q,\mu)$, $a_{1},a_{2}\equiv a_{1},a_{2}(n,p,q)$ and $d_{1},d_{2}\equiv d_{1},d_{2}(n,p,q)$.
 \end{theorem}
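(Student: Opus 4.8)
\textit{Proof proposal for Theorem \ref{t2}.} The guiding principle is that, once Theorem \ref{t1} is available, the nonuniform ellipticity of the system no longer plays any role: on $\Omega\cap B_{\sigma}(x_{0})$ the a priori bound \eqref{lipfin} turns \eqref{i.pq} into genuine $p$-uniform ellipticity with constants depending only on $\nr{\dd u}_{L^{\infty}}$, and what remains is a second–order Caccioppoli estimate carried out inside the very approximation scheme already built for Theorem \ref{t1}. Concretely, I would fix intermediate radii $\sigma<\tau<\rr$, keep the boundary–adapted regularized minimizers (smooth solutions of the penalized systems, satisfying the uniform $L^{\infty}$ gradient bound inherited from the proof of Theorem \ref{t1}), and differentiate their Euler–Lagrange systems in a coordinate direction $x_{s}$ — or, near the boundary, form the discrete system satisfied by $\sh u_{j}$. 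Testing with $\eta^{2}\del_{s}u_{j}$, respectively $\hs(\eta^{2}\sh u_{j})$, with $\eta$ a cut–off between $B_{\sigma}$ and $B_{\tau}$, and using ellipticity from below, I would bound $\int\eta^{2}\snr{\dd V_{\mu}(\dd u_{j})}^{2}$ (and, when $p>2$, $\int\eta^{2}\snr{\dd W_{\mu}(\dd u_{j})}^{2}$) from above.

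Three families of terms must be controlled. First, the Hessian term carrying the $(p,q)$–gap: here I would write $\snr{\del a(\dd u_{j})}\lesssim (1+\snr{\dd u_{j}})^{q-p}(\mu^{2}+\snr{\dd u_{j}}^{2})^{(p-2)/2}$ and bound the $q-p$ excess by the Lipschitz bound, so that the surviving factor matches the ellipticity weight and is reabsorbed after Young's inequality. Second, the cut–off cross terms, handled in the same way, producing $\snr{\dd\eta}^{2}\sim(\tau-\sigma)^{-2}$ times a power of $\nr{\dd u}_{L^{\infty}}$. Third, the forcing term: rather than differencing $f$, I would integrate by parts once more so that $f$ pairs against $2\eta\,\del_{s}\eta\,\del_{s}u_{j}+\eta^{2}\del_{s}^{2}u_{j}$; the first piece is controlled by $\nr{f}_{L^{2}(\supp\eta)}\nr{\dd u_{j}}_{L^{\infty}}$ and the second by $\nr{f}_{L^{2}(\supp\eta)}$ times the Hessian term, which is reabsorbed. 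Since $n\ge 3$ and $\Omega\cap B_{\rr}(x_{0})$ is bounded, $L(n,1)\hookrightarrow L^{n}\hookrightarrow L^{2}$, so $\nr{f}_{L^{2}(\Omega\cap B_{\rr}(x_{0}))}\lesssim [f]_{n,1;\Omega\cap B_{\rr}(x_{0})}$; this is how $[f]_{n,1}$ enters \eqref{t2.1}–\eqref{t2.3} (in fact $L^{2}$–integrability of $f$ would already suffice).

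For the global character of the estimate I would exploit, exactly as in the scheme of \cite{bl,bdms} used for Theorem \ref{t1}, that $u_{j}=0$ on $\partial\Omega\cap B_{\rr}(x_{0})$ and that $\Omega$ is convex: tangential difference quotients are admissible and the argument above directly controls the tangential part of $\dd^{2}u_{j}$, while the normal second derivatives are recovered from the system itself, solving $-\diver a(\dd u_{j})=f_{j}$ for the normal–normal component of $\del a(\dd u_{j})\dd^{2}u_{j}$ in terms of tangential data and $f_{j}$; convexity gives the favourable sign of the boundary integrals that appear in the tangential difference–quotient step. Passing to the limit (first $j\to\infty$, with $\mu$ fixed) yields the estimates for $u$, and here the dichotomy in the statement emerges: when $p\le 2$ the weight $(\mu^{2}+\snr{z}^{2})^{(p-2)/2}$ is bounded below by $(\mu^{2}+\nr{\dd u}_{L^{\infty}}^{2})^{(p-2)/2}>0$ on $\{\snr{z}\le\nr{\dd u}_{L^{\infty}}\}$, so $\dd^{2}u\in L^{2}$ together with $V_{\mu}(\dd u)\in W^{1,2}$, i.e. \eqref{t2.1}; when $p>2$ only the weighted estimate $W_{\mu}(\dd u)\in W^{1,2}$ survives in general, giving \eqref{t2.2}, and if in addition $\mu>0$ the weight is again bounded below, whence $\dd^{2}u\in L^{2}$ and \eqref{t2.3}. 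Finally, inserting \eqref{lipfin} (applied on $B_{\tau}$ with $\tau-\sigma\sim\rr-\sigma$) in place of $\nr{\dd u}_{L^{\infty}}$ converts every dependence on the Lipschitz bound into the powers of $\mathcal{F}(u;\Omega\cap B_{\rr}(x_{0}))+\nr{u}_{W^{1,p}(\Omega\cap B_{\rr}(x_{0}))}^{p}+1$ and $[f]_{n,1;\Omega\cap B_{\rr}(x_{0})}$ displayed, the exponents $a,a_{1},a_{2},d_{1},d_{2}$ being the resulting combinations.

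I expect the main obstacle not to be the interior computation — which, once the $(p,q)$–gap bookkeeping is defused by Theorem \ref{t1}, is classical — but rather organizing the difference–quotient argument so that it is compatible with the \emph{partial} homogeneous boundary datum on a merely convex (non–smooth) domain, and tracking constants through the nested approximation so that the degenerate/singular limit $\mu\to 0$ is handled correctly and $\mu$–uniformly in both regimes $p\le 2$ and $p>2$.
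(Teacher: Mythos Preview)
Your overall plan---derive a second--order Caccioppoli inequality for the smooth approximating solutions, feed in the uniform Lipschitz bound from Theorem~\ref{t1} to kill the $(p,q)$-gap, and pass to the limit---is exactly what the paper does. The a~priori estimate is obtained somewhat differently: rather than differentiating \eqref{ela} in a direction and testing with $\eta^{2}\partial_{s}u_{\delta\varepsilon}$, the paper exploits the Uhlenbeck structure to write the key identity \eqref{3.7}, namely $\mathcal{L}(\ti{F}(\ell_{\ti{\mu}}(Dv)))=\langle\partial a(Dv)D^{2}v,D^{2}v\rangle-\langle Dv,Df\rangle$, and then multiplies by $\eta^{2}H(\snr{Dv})$ with $H\equiv 1$ (for $p\le 2$) or $H(t)=(\mu^{2}+t^{2})^{(p-2)/2}$ (for $p>2$). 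The boundary term is dispatched in one stroke via the second fundamental form identity \eqref{3.8} on the smooth approximate domain $\Omega_{\varepsilon}$, rather than through a tangential/normal decomposition as you suggest. Your route would work too, but the paper's choice avoids the separate tangential difference-quotient and normal-recovery steps.

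There is, however, a genuine gap in your proposal when $p>2$. You say ``passing to the limit yields the estimates for $u$'', but the uniform bound on $\nr{DW_{\mu_{\delta}}(Du_{\delta\varepsilon})}_{L^{2}}$ only gives a weak $W^{1,2}$-limit $W$, and you must \emph{identify} $W=W_{\mu}(Du)$. For $p\le 2$ this is free: the bound on $DV_{\mu_{\delta}}(Du_{\delta\varepsilon})$ combined with the Lipschitz estimate yields a uniform $W^{2,2}$-bound on $u_{\delta\varepsilon}$, hence strong $L^{2}$-convergence of $Du_{\delta\varepsilon}$ by Rellich, and the identification follows. For $p>2$ with $\mu\ge 0$ there is no such $W^{2,2}$-bound (the weight $\ell_{\mu}(z)^{p-2}$ vanishes at $z=0$ when $\mu=0$), so weak convergence of $Du_{\delta\varepsilon}$ is all you have, and this does \emph{not} suffice to conclude $W_{\mu_{\delta}}(Du_{\delta\varepsilon})\to W_{\mu}(Du)$. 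The paper devotes a substantial block (displays \eqref{5.29}--\eqref{5.35} and the following computations) to proving \emph{energy convergence} $\mathcal{F}_{\delta\varepsilon}(u_{\delta\varepsilon})\to\mathcal{F}(u)$, which via strict convexity yields $V_{\mu_{\delta}}(Du_{\delta\varepsilon})\to V_{\mu}(Du)$ strongly in $L^{2}$, and only then bootstraps to $W_{\mu_{\delta}}(Du_{\delta\varepsilon})\to W_{\mu}(Du)$ strongly in $L^{2}$ using the Lipschitz bound and \eqref{dvp.1}. This identification step is the real content of the $p>2$ case and is missing from your sketch.
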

 We refer to Sections \ref{bt} and \ref{sa} for more details on the various quantities appearing in the statement of Theorems \ref{t1}-\ref{t2}. Let us point out that while in the singular case $1<p\le 2$ Theorem \ref{t2} yields higher differentability for the usual nonlinear vector field $V_{\mu}(Du)$ incorporating the scaling features of the $p$-Laplacian; in the degenerate one $p>2$ we need to consider a different quantity $W_{\mu}(Du)$, still related to the $p$-Laplace operator that seems to better fits degenerate problems, cf. \cite{bcdm,cm2}. We conclude by pointing out that the approach presented in this paper allows extending the results of Theorems \ref{t1}-\ref{t2} to a more general class of nonuniformly elliptic systems than those with $(p,q)$-growth. In fact, with minor modification to the approximation scheme designed in Section \ref{as}\footnote{The approximation procedure described in Section \ref{as} is based on the mollification of the integrand in order to make it smooth and on the rebalancing of its growth by adding a term proportional to the $L^{q}$-energy via an infinitesimal parameter. In alternative, one could correct the growth of the integrand as done in \cite{bemi,demi1} by connecting it at infinity to the $L^{p}$-energy with a $W^{1,\infty}$-junction and then regularize via convolution the truncated integrand. This produces a family of smooth, uniformly elliptic problems, to whom the estimates derived in Section \ref{apr} apply, and that suitably converge to the original problem even if $\Delta_{2}$ or $\nabla_{2}$ conditions fail.} it is possible to apply the a priori estimates obtained in Section \ref{apr} to systems with fast exponential growth or to second order elliptic differential operators governed by Orlicz functions not satisfying $\Delta_{2}$ or $\nabla_{2}$ conditions, see \cite{bemi,demi1,mm2} for more details.
 \section{Preliminaries}
 \subsection{Notation} 
 In this paper, $\Omega\subset \mathbb{R}^{n}$ will always be (at least) an open, bounded, convex set and $n\ge 3$. We denote by $c$ a general constant larger than one possibly changing from line to line and depending on the data of the problem. Special occurrences will be highlighted as $\ti{c}$, $c_{*}$, $c'$ or the like. Specific dependencies on certain parameters will be outlined by putting them in parentheses, i.e. $c\equiv c(n,p)$ means that $c$ depends on $n$ and $p$. By $ B_r(x_0):= \{x \in \er^n  :   |x-x_0|< r\}$ we indicate the open ball with center in $x_0$ and radius $r>0$; we shall avoid denoting the center when this is clear from the context, i.e., $B \equiv B_r \equiv B_r(x_0)$; this happens in particular with concentric balls. The Lebesgue measure of the $n$-dimensional unitary ball is denoted by $\omega_{n}$, $\texttt{tr}(\cdot)$ refers to the usual trace operator and with $\Phi$ being a Lipschitz-regular map, $\mathcal{J}_{\Phi}$ denotes its Jacobian. We further introduce symbols
 \begin{flalign}\label{*}
 p^{*}_{\mf{s}}:=\begin{cases}
 \ p(n-1)/(n-1-p)\quad &\mbox{if} \ \ n-1>p \\
 \ p+p\nu^{-1}\quad &\mbox{if} \ \ n-1=p,
 \end{cases}\qquad \quad p^{*}:=\begin{cases}
 \ pn/(n-p)\quad &\mbox{if} \ \ n>p\\
 \ p+p\nu^{-1}\quad &\mbox{if} \ \ n=p,
 \end{cases}
 \end{flalign}
 for arbitrary $\nu\in (0,1/2)$, representing the $(n-1)$-dimensional and the $n$-dimensional critical $p$-Sobolev exponent respectively. Moreover, given parameters $\delta,\varepsilon\in (0,1]$, saying that a sequence $\{a_{\varepsilon}\}_{\varepsilon\in (0,1]}$ is $\texttt{o}(\varepsilon)$ means that $a_{\varepsilon}\to_{\varepsilon\to 0}0$, while a sequence $\{b_{\delta\varepsilon}\}_{\delta,\varepsilon\in (0,1]}$ is such that $b_{\delta\varepsilon}=\texttt{o}_{\varepsilon}(\delta)$ when $b_{\delta\varepsilon}\to_{\delta\to 0}0$ for fixed $\varepsilon$, whilst we say that $b_{\delta\varepsilon}=\texttt{o}(\delta,\varepsilon)$ if $b_{\delta\varepsilon}\to_{\delta\to 0}b_{\varepsilon}$ for fixed $\varepsilon$ and $b_{\varepsilon}=\texttt{o}(\varepsilon)$. 
 
 \subsection{Basic tools for the $p$-Laplacian}\label{bt}
 With $s\ge 0$ and $z\in \mathbb{R}^{N\times n}$ we shall always set $\ell_{s}(z):=\sqrt{s^{2}+\snr{z}^{2}}$. Moreover, we will often employ the vector fields
 $$
 V_{s}(z):=(s^{2}+\snr{z}^{2})^{\frac{p-2}{4}}z\qquad \mbox{and}\qquad W_{s}(z):=(s^{2}+\snr{z}^{2})^{\frac{p-2}{2}}z,
 $$
 that encode the scaling features of the $p$-Laplacian. It is well-known that if $w$ is a $W^{2,2}$-regular map, the equivalences
\begin{flalign}\label{dvp}
 \ell_{s}(Dw)^{\frac{p-2}{2}}\snr{D^{2}w}\stackrel{p>1}{\approx} \snr{DV_{s}(Dw)},\qquad \qquad \ell_{s}(Dw)^{p-2}\snr{D^{2}w}\stackrel{p>3/2}{\approx} \snr{DW_{s}(Dw)}
\end{flalign}
and 
\begin{flalign}\label{dvp.1}
\begin{cases}
\ \snr{V_{s}(z_{1})-V_{s}(z_{2})}\stackrel{p>1}{\approx}\left(s^{2}+\snr{z_{1}}^{2}+\snr{z_{2}}^{2}\right)^{\frac{p-2}{4}}\snr{z_{1}-z_{2}}\\
\ \snr{W_{s}(z_{1})-W_{s}(z_{2})}\stackrel{p>3/2}{\approx}\left(s^{2}+\snr{z_{1}}^{2}+\snr{z_{2}}^{2}\right)^{\frac{p-2}{2}}\snr{z_{1}-z_{2}}
\end{cases}
\end{flalign}
hold for all $z,z_{1},z_{2}\in \mathbb{R}^{N\times n}$, with constants implicit in "$\approx$" depending only on $(n,N,p)$, cf. \cite[Section 2]{ham}.
We conclude this section by recalling a classical iteration lemma.
\begin{lemma} \label{l5}
Let $h\colon [t,s]\to \mathbb{R}$ be a non-negative and bounded function, and let $a,b, \gamma$ be non-negative numbers. Assume that the inequality 
$ 
h(\tau_1)\le  (1/2) h(\tau_2)+(\tau_2-\tau_1)^{-\gamma}a+b,
$
holds whenever $t\le \tau_1<\tau_2\le s$. Then $
h(t)\le c( \gamma)[a(s-t)^{-\gamma}+b]
$, holds too. 
\end{lemma}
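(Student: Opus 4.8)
The plan is to iterate the hypothesis along a geometrically refined partition of $[t,s]$ anchored at the left endpoint $t$. The case $\gamma=0$ is immediate: the assumption applied to $(\tau_1,\tau_2)=(t,s)$ gives $h(t)\le\tfrac12 h(s)+a+b$, and a trivial iteration on a sequence increasing to $s$, together with the boundedness of $h$, yields the claim. So assume $\gamma>0$. First I would fix a parameter $\lambda=\lambda(\gamma)\in(0,1)$ chosen so that $\lambda^{-\gamma}/2<1$, for instance $\lambda:=2^{-1/(2\gamma)}$, for which $\lambda^{-\gamma}/2=1/\sqrt2<1$. Then I set $\tau_i:=t+(1-\lambda^i)(s-t)$ for each integer $i\ge0$, so that $\tau_0=t$, the sequence $\{\tau_i\}$ is increasing with $\tau_i\to s$, and $\tau_{i+1}-\tau_i=\lambda^i(1-\lambda)(s-t)$.

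Next I would apply the assumed inequality with the pair $(\tau_1,\tau_2)=(\tau_i,\tau_{i+1})$ for each $i$, obtaining
\[
h(\tau_i)\le \tfrac12\, h(\tau_{i+1})+\lambda^{-\gamma i}(1-\lambda)^{-\gamma}(s-t)^{-\gamma}a+b,
\]
and then iterate this relation $k$ times starting from $i=0$. A straightforward induction gives
\[
h(t)=h(\tau_0)\le 2^{-k}h(\tau_k)+(1-\lambda)^{-\gamma}(s-t)^{-\gamma}a\sum_{i=0}^{k-1}\Big(\tfrac{\lambda^{-\gamma}}{2}\Big)^{i}+b\sum_{i=0}^{k-1}2^{-i}.
\]

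Finally I would let $k\to\infty$. Since $h$ is bounded on $[t,s]$, the remainder $2^{-k}h(\tau_k)$ vanishes in the limit; since $\lambda^{-\gamma}/2<1$ by the choice of $\lambda$, the first geometric series converges to $(1-\lambda^{-\gamma}/2)^{-1}$, and the second converges to $2$. This yields
\[
h(t)\le \frac{(1-\lambda)^{-\gamma}}{1-\lambda^{-\gamma}/2}\,(s-t)^{-\gamma}a+2b,
\]
which is the assertion with $c(\gamma):=\max\big\{(1-\lambda)^{-\gamma}(1-\lambda^{-\gamma}/2)^{-1},\,2\big\}$ and $\lambda=\lambda(\gamma)$ as above.

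The only genuinely delicate point is the very first one: calibrating $\lambda$ as a function of $\gamma$ so that the ratio $\lambda^{-\gamma}/2$ governing the resulting geometric series is strictly less than $1$ (this is exactly why the factor $1/2$ in the hypothesis, rather than an arbitrary constant, or rather any constant $\theta<1$, is what makes the scheme work). Everything else is bookkeeping, and the boundedness hypothesis on $h$ is used solely to discard the tail term $2^{-k}h(\tau_k)$ when passing to the limit.
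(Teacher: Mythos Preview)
Your proof is correct and is precisely the standard argument for this classical iteration lemma. The paper itself does not supply a proof: it simply recalls the statement as a well-known tool, so there is nothing to compare against beyond noting that your geometric sequence $\tau_i=t+(1-\lambda^i)(s-t)$ with $\lambda=\lambda(\gamma)$ chosen so that $\lambda^{-\gamma}/2<1$ is exactly the textbook approach.
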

 \subsection{Convex domains}\label{cd}
We assume that
\begin{flalign}\label{om}
\Omega\subset \mathbb{R}^{n}, \ \ n\ge 3, \ \ \mbox{is an open, bounded, convex domain}. 
\end{flalign}
For $x_{0}\in \mathbb{R}^{n}$, $\sigma\in (0,\diam(\Omega)]$, let us introduce the density function  $\Gamma_{\Omega}(x_{0};\sigma):=\sigma^{n}\snr{\Omega\cap B_{\sigma}(x_{0})}^{-1}$, defined whenever $\snr{\Omega\cap B_{\sigma}(x_{0})}>0$. If $\Omega_{1}$, $\Omega_{2}$ are two sets and $B_{\sigma_{1}}(x_{1})$, $B_{\sigma_{2}}(x_{2})$ two balls such that $0<\theta\sigma_{1}\le \sigma_{2}\le \sigma_{1}\le \min\{\diam(\Omega_{1}),\diam(\Omega_{2})\}$ for some $\theta\in (0,1)$, $\Omega_{2}\cap B_{\sigma_{2}}(x_{2})\subset \Omega_{1}\cap B_{\sigma_{1}}(x_{1})$ and $\snr{\Omega_{2}\cap B_{\sigma_{2}}(x_{2})}>0$, then
\eqn{2.2.2}
 $$
 \Gamma_{\Omega_{1}}(x_{1};\sigma_{1})=\frac{\sigma_{1}^{n}}{\snr{\Omega_{1}\cap B_{\sigma_{1}}(x_{1})}}\le \frac{\sigma_{2}^{n}}{\theta^{n}\snr{\Omega_{2}\cap B_{\sigma_{2}}(x_{2})}}=\theta^{-n}\Gamma_{\Omega_{2}}(x_{2};\sigma_{2}).
 $$
Convex domains feature reasonable uniformity properies in terms of the density function $\Gamma_{\Omega}$, as the next lemma shows.
\begin{lemma}
Let $\Omega\subset \mathbb{R}^{n}$ be a convex domain. There exists a positive constant $\Upsilon_{\Omega}\equiv \Upsilon_{\Omega}(n,\Omega)$ such that
\begin{flalign}\label{2.2}
 \Gamma_{\Omega}(x_{0};\sigma):=\frac{\sigma^{n}}{\snr{\Omega\cap B_{\sigma}(x_{0})}}\le \Upsilon_{\Omega}\qquad \mbox{for all} \ \ x_{0}\in \bar{\Omega}, \ \ \sigma\in (0,\diam(\Omega)].
 \end{flalign}
\end{lemma}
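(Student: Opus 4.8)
The plan is to prove the density bound \eqref{2.2} by a compactness–scaling argument, exploiting the scale invariance of the quantity $\Gamma_{\Omega}(x_{0};\sigma)$ built into its definition. First I would record the trivial upper bound for large radii: if $\sigma\ge\diam(\Omega)$ were allowed, then $\Omega\cap B_{\sigma}(x_{0})=\Omega$ for every $x_{0}\in\bar\Omega$, so $\Gamma_{\Omega}(x_{0};\sigma)=\sigma^{n}/\snr{\Omega}$, which is already under control for $\sigma\le\diam(\Omega)$ once we know $\snr{\Omega}>0$. Hence the only issue is small $\sigma$, i.e.\ the behaviour of $\Gamma_{\Omega}(x_{0};\sigma)$ as $\sigma\to0^{+}$ with $x_{0}\in\bar\Omega$, and the heart of the matter is the uniformity in $x_{0}$ and $\sigma$ simultaneously.

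The key geometric step is the following \emph{cone property of convex sets}: for a bounded convex domain $\Omega$ there exist $r_{0}>0$ and an aperture $\beta\in(0,\pi)$ (both depending only on $\Omega$, and on $n$) such that for every $x_{0}\in\bar\Omega$ there is a unit vector $e=e(x_{0})$ with the truncated open cone
$$
\mathcal{C}(x_{0},e,\beta,r_{0}):=\bigl\{\,x\in\er^{n}\ :\ 0<\snr{x-x_{0}}<r_{0},\ \ (x-x_{0})\cdot e>\snr{x-x_{0}}\cos(\beta/2)\,\bigr\}\subset\Omega.
$$
One proves this by picking any fixed ball $B_{2r_{0}}(y_{*})\subset\Omega$ (possible since $\Omega$ is open and nonempty) and, for each $x_{0}\in\bar\Omega$, taking $e$ in the direction of $y_{*}-x_{0}$; convexity guarantees that the convex hull of $\{x_{0}\}$ and $B_{r_{0}}(y_{*})$ lies in $\bar\Omega$, and this convex hull contains a truncated cone at $x_{0}$ whose aperture is bounded below only in terms of $r_{0}/\diam(\Omega)$, hence only in terms of $\Omega$ and $n$. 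Consequently, for every $x_{0}\in\bar\Omega$ and every $\sigma\in(0,r_{0}]$,
$$
\snr{\Omega\cap B_{\sigma}(x_{0})}\ \ge\ \snr{\mathcal{C}(x_{0},e,\beta,r_{0})\cap B_{\sigma}(x_{0})}\ =\ \mf{c}(n,\beta)\,\sigma^{n},
$$
since the cone is scale invariant about its vertex $x_{0}$ and intersecting with $B_{\sigma}(x_{0})$ for $\sigma\le r_{0}$ simply rescales the fixed spherical sector. This gives $\Gamma_{\Omega}(x_{0};\sigma)\le\mf{c}(n,\beta)^{-1}=:\Upsilon_{\Omega}'$ for all $x_{0}\in\bar\Omega$ and $0<\sigma\le r_{0}$.

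It remains to handle the intermediate range $r_{0}<\sigma\le\diam(\Omega)$, which is elementary: there $\snr{\Omega\cap B_{\sigma}(x_{0})}\ge\snr{\Omega\cap B_{r_{0}}(x_{0})}\ge\mf{c}(n,\beta)\,r_{0}^{n}$ by the previous step, so $\Gamma_{\Omega}(x_{0};\sigma)\le\sigma^{n}/(\mf{c}(n,\beta)r_{0}^{n})\le\diam(\Omega)^{n}/(\mf{c}(n,\beta)r_{0}^{n})=:\Upsilon_{\Omega}''$. Taking $\Upsilon_{\Omega}:=\max\{\Upsilon_{\Omega}',\Upsilon_{\Omega}''\}$, which depends only on $n$ and on the geometry of $\Omega$ (through $r_{0}$, $\beta$ and $\diam(\Omega)$, all of which are determined by $\Omega$), yields \eqref{2.2}. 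The main obstacle is the geometric cone lemma — more precisely, making the aperture $\beta$ and the radius $r_{0}$ uniform over all boundary points $x_{0}\in\partial\Omega$; this is exactly where convexity (and boundedness, to get a fixed interior ball and a finite diameter) is used, and once that uniform interior cone is in hand the measure-theoretic estimates are routine. An alternative to the explicit cone construction would be a compactness argument: if \eqref{2.2} failed there would be sequences $x_{k}\in\bar\Omega$, $\sigma_{k}\in(0,\diam(\Omega)]$ with $\Gamma_{\Omega}(x_{k};\sigma_{k})\to\infty$; after extracting a convergent subsequence $x_{k}\to\bar x\in\bar\Omega$ and rescaling by $\sigma_{k}$ (necessarily $\sigma_{k}\to0$ by the easy large-scale bound), the rescaled domains $\sigma_{k}^{-1}(\Omega-x_{k})$ converge locally in Hausdorff distance to a convex set containing a half-space through the origin, whose intersection with the unit ball has measure $\ge\omega_{n}/2$, contradicting $\Gamma_{\Omega}(x_{k};\sigma_{k})\to\infty$. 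Either route closes the proof.
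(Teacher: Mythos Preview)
Your main argument via the uniform interior cone is correct and is essentially the same mechanism the paper uses: the paper invokes the interior cone condition on $\partial\Omega$ (citing \cite[Section~2.1]{bdms}) and then reduces interior points either to a full ball (when $\dist(x_{0},\partial\Omega)>\sigma/4$) or to a nearby boundary point (when $\dist(x_{0},\partial\Omega)\le\sigma/4$), whereas you establish the cone directly and uniformly at every $x_{0}\in\bar\Omega$ via the convex hull of $\{x_{0}\}$ with a fixed interior ball, and then split only on the scale $\sigma\lessgtr r_{0}$. Your route is a bit more self-contained (you actually construct the cone rather than cite it) and avoids the boundary/interior case distinction; the paper's route makes the dependence of $\Upsilon_{\Omega}$ on the boundary constant more explicit. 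Either way the content is the uniform cone property of bounded convex sets.

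One correction to your alternative compactness sketch: the blow-up $\sigma_{k}^{-1}(\Omega-x_{k})$ need \emph{not} converge to a set containing a half-space. If $\bar x\in\partial\Omega$ is a genuine corner (say the vertex of a simplex), the limit is the tangent cone at $\bar x$, which can have solid angle much smaller than $\omega_{n}/2$. The argument is still salvageable because the tangent cone of a bounded convex body at any point of $\bar\Omega$ is a convex cone with nonempty interior, hence $|C\cap B_{1}(0)|>0$, and Hausdorff convergence of convex sets passes to volumes on bounded regions; that positivity is what contradicts $\Gamma_{\Omega}(x_{k};\sigma_{k})\to\infty$. So replace ``containing a half-space'' and ``$\ge\omega_{n}/2$'' by ``a convex cone with nonempty interior'' and ``$>0$'' and the compactness route closes as well.
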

\begin{proof}
Since a convex domain satisfies the interior cone condition, \eqref{2.2} holds for all $x_{0}\in \partial \Omega$ and any $\sigma\in (0,\diam(\Omega)]$ for some $\tilde{\Upsilon}_{\Omega}\equiv \ti{\Upsilon}_{\Omega}(n,\Omega)>0$, see \cite[Section 2.1]{bdms}. Next, if $x_{0}\in \Omega$ with $\dist(x_{0},\partial\Omega)>0$, we take any $\sigma\in (0,\diam(\Omega)]$ and distinguish two cases: $\dist(x_{0},\partial\Omega)>\sigma/4$ and $0<\dist(x_{0},\partial\Omega)\le \sigma/4$. If $\dist(x_{0},\partial\Omega)>\sigma/4$, then $B_{\sigma/8}(x_{0})\Subset \Omega$ and obviously $\Omega\cap B_{\sigma/8}(x_{0})\subset \Omega\cap B_{\sigma}(x_{0})$, so $\snr{\Omega\cap B_{\sigma/8}(x_{0})}=\snr{B_{\sigma/8}(x_{0})}$ and by \eqref{2.2.2} inequality $\Gamma_{\Omega}(x_{0};\sigma)\le 2^{3n}\Gamma_{\Omega}(x_{0};\sigma/8)=2^{3n}\omega_{n}^{-1}$ holds true. In case $0<\dist(x_{0},\partial \Omega)\le \sigma/4$, we let $\ti{x}_{0}\in \partial \Omega$ verifying $\snr{x_{0}-\ti{x}_{0}}=\dist(x_{0},\partial \Omega)$ and observe that if $x\in \Omega\cap B_{\sigma/8}(\ti{x}_{0})$, then $\snr{x-x_{0}}\le \snr{x-\ti{x}_{0}}+\snr{\ti{x}_{0}-x_{0}}\le \dist(x_{0},\partial \Omega)+\sigma/8\le (\sigma/4)+(\sigma/8)<\sigma$, so $\Omega\cap B_{\sigma/8}(\ti{x}_{0})\subset \Omega\cap B_{\sigma}(x_{0})$. Since \eqref{2.2} is true for boundary points we estimate
$\Gamma_{\Omega}(x_{0};\sigma)\le \sigma^{n}\snr{\Omega\cap B_{\sigma/8}(\ti{x}_{0})}^{-1}=2^{3n}\Gamma_{\Omega}(\ti{x}_{0};\sigma/8)\le 2^{3n}\ti{\Upsilon}_{\Omega}$, and \eqref{2.2} is verified for all $x_{0}\in \bar{\Omega}$ with $\Upsilon_{\Omega}:=2^{3n}\max\{\ti{\Upsilon}_{\Omega},\omega_{n}^{-1}\}$.
\end{proof}
Combining \eqref{2.2.2}-\eqref{2.2} with the relative isoperimetric inequality \cite[Theorems 5.10 and 5.11]{eg}, we obtain that if $\Omega$ is a convex domain, there is $\rr':=\min\left\{\diam(\Omega),\left(\snr{\Omega}/(2\omega_{n})\right)^{1/n}\right\}$ such that
 \eqn{2.2.1}
 $$\mathcal{H}^{n-1}(\partial B_{\sigma}(x_{0})\cap \Omega)\approx\sigma^{n-1},\qquad\quad \snr{\Omega\cap B_{\sigma}(x_{0})}\approx \snr{B_{\sigma}(x_{0})}, $$ for all $x_{0}\in \bar{\Omega}$, $\sigma\in (0,\rr']$, with constants implicit in "$\approx$" depending only on $(n,\Omega)$. It is worth stressing that if $B_{\sigma}(x_{0})\subset \mathbb{R}^{n}$ is a ball so that $0<\sigma\le \rr'$ and $\Gamma_{\Omega}(x_{0};\sigma)\le \Upsilon_{\Omega}$, then \eqref{2.2.1} holds at that radius. We further recall that if $\Omega$ is a $C^{2}$-regular convex set, $\mf{v}_{\Omega}\in C^{1}(\partial \Omega,\mathbb{R}^{n})$ is its outer unitary normal vector, $\mathbf{S}_{\Omega}$ is the second fundamental form of $\partial \Omega$, and $w\in C^{1}(\bar{\Omega},\mathbb{R}^{n})$ is a vector field whose tangential components vanish on $\partial \Omega$, it holds that
\begin{flalign}\label{3.8}
\langle w,\mf{v}_{\Omega}\rangle\diver \ w-\langle\partial_{w}w,\mf{v}_{\Omega}\rangle=-\texttt{tr}(\mathbf{S}_{\Omega})\langle w,\mf{v}_{\Omega}\rangle^{2},\qquad \qquad \texttt{tr}(\mathbf{S}_{\Omega})\le 0,
\end{flalign}
see \cite[Section 2.4]{bdms} for the general form of \eqref{3.8}. An important feature of convex sets is the possibility of approximating them from outside via smooth, convex sets. More precisely, with $\Omega\subset \mathbb{R}^{n}$ as in \eqref{om}, by \cite[Lemma 5.2]{cm2}, see also \cite[Section 4.1]{bdms}, there is a positive limiting radius $\rr''\equiv \rr''(n,\Omega)$ and a sequence $\{\Omega_{\varepsilon}\}_{\varepsilon\in (0,1]}$ of smooth, convex domains such that
\begin{flalign}
\left\{
\begin{array}{c}
\displaystyle 
\ \Omega\subset \Omega_{\varepsilon} \ \ \mbox{for all} \ \ \varepsilon\in (0,1],\qquad \quad \snr{\Omega_{\varepsilon}\setminus \Omega}\to_{\varepsilon\to 0}0,\qquad \quad \texttt{d}_{\varepsilon}:=\dist_{\mathcal{H}}(\Omega_{\varepsilon},\Omega)\to_{\varepsilon\to 0}0\\[8pt]\displaystyle
\ \sup_{\varepsilon\in (0,1]}\frac{\sigma^{n}}{\snr{\Omega_{\varepsilon}\cap B_{\sigma}(x_{0})}}\stackrel{\eqref{2.2}}{\le} \Upsilon_{\Omega} \ \ \mbox{for all} \ \ \sigma\in (0,\rr''),\ \ x_{0}\in \partial \Omega,
\end{array}
\right.\label{3.8.2}
\end{flalign}
and
\begin{flalign}\label{3.8.2.1}
\begin{array}{c}
\mbox{the Lipschitz constants of the graphs locally describing $\partial \Omega_{\varepsilon}$} \\
\mbox{are uniformly bounded in terms of those of $\Omega$},
\end{array}
\end{flalign}
where $\dist_{\mathcal{H}}(\Omega_{\varepsilon},\Omega)$ denotes the Hausdorff distance between $\Omega_{\varepsilon}$ and $\Omega$. Set $\rr_{*}:=\min\{1,\rr',\rr''\}$. From now on, any occurrence of the symbol $\rr_{*}$ throughout the paper - even if not explicitly specified - refers to the threshold radius that we have just introduced. We conclude this section by recording a Sobolev-Poincar\'e inequality for convex sets, cf. \cite[Lemma 2.3]{bdms}.
\begin{lemma}
Let $K\subset \mathbb{R}^{n}$ be a bounded, open, convex set and $1\le p\le n$. Then for any $w\in W^{1,p}(K)$ it holds
\begin{flalign}\label{pk}
\left(\mint_{K}\snr{w}^{p^{*}}\dx\right)^{\frac{1}{p^{*}}}\le \frac{c\diam (K)^{n}\snr{K}^{\frac{1}{n}}}{\snr{K}}\left(\mint_{K}\snr{Dw}^{p}\dx\right)^{\frac{1}{p}}+c\left(\mint_{K}\snr{w}^{p}\dx\right)^{\frac{1}{p}},
\end{flalign}
where $p^{*}$ has been defined in \eqref{*} and it is $c\equiv c(n,p,\nu)$.
\end{lemma}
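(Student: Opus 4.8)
The plan is to first establish the mean‑zero version
\[
\left(\mint_{K}\snr{w-w_{K}}^{p^{*}}\dx\right)^{\frac{1}{p^{*}}}\le \frac{c\diam (K)^{n}\snr{K}^{\frac{1}{n}}}{\snr{K}}\left(\mint_{K}\snr{Dw}^{p}\dx\right)^{\frac{1}{p}},\qquad w_{K}:=\mint_{K}w\dx ,
\]
and then to recover \eqref{pk} by the triangle inequality in $L^{p^{*}}(K)$ together with Jensen's inequality $\snr{w_{K}}\le \mint_{K}\snr{w}\dx\le (\mint_{K}\snr{w}^{p}\dx)^{1/p}$, which produces the last term on the right‑hand side of \eqref{pk}.

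For the range $1<p<n$ I would argue through the classical pointwise representation valid on convex sets: since $K$ is convex, every segment joining a point $x\in K$ to another point of $K$ lies in $K$, so integrating $Dw$ along the rays issuing from $x$ and averaging the result in the second variable over $K$ gives, for a.e.\ $x\in K$,
\[
\snr{w(x)-w_{K}}\le \frac{\diam(K)^{n}}{n\snr{K}}\int_{K}\frac{\snr{Dw(y)}}{\snr{x-y}^{n-1}}\dy .
\]
I would then invoke the $L^{p}(K)\to L^{p^{*}}(K)$ boundedness of the truncated order‑one Riesz potential $g\mapsto \int_{K}\snr{x-y}^{1-n}g(y)\dy$, valid for $1<p<n$ with operator norm depending only on $(n,p)$ (its scale invariance is exactly what prevents any dependence on $\snr{K}$ or $\diam(K)$ from entering at this stage). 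Combining the two displays yields $\nr{w-w_{K}}_{L^{p^{*}}(K)}\le c(n,p)\diam(K)^{n}\snr{K}^{-1}\nr{Dw}_{L^{p}(K)}$, and converting full integrals into averages via the identity $\snr{K}^{-1/p^{*}}\snr{K}^{1/p}=\snr{K}^{1/n}$ gives precisely the mean‑zero estimate above.

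It remains to cover the two endpoints, where the above chain breaks down. For $p=n$ the Riesz potential no longer maps $L^{n}$ into $L^{\infty}$, so I would instead set $p_{0}:=n(\nu+1)/(2\nu+1)$, which lies in $(1,n)$ precisely because $\nu\in(0,1/2)$, and which is chosen so that $p_{0}^{*}=np_{0}/(n-p_{0})=n+n\nu^{-1}=p^{*}$; applying the already‑settled case with exponent $p_{0}$ and then Jensen's inequality $(\mint_{K}\snr{Dw}^{p_{0}}\dx)^{1/p_{0}}\le(\mint_{K}\snr{Dw}^{n}\dx)^{1/n}$ gives the claim immediately, the dependence $c\equiv c(n,p,\nu)$ coming from $c(n,p_{0})$ since $p_{0}$ itself depends on $\nu$. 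For $p=1$ the Riesz potential estimate degenerates to weak type $(1,n/(n-1))$, so I would replace it by the relative isoperimetric inequality for convex sets (available through \cite[Theorems 5.10 and 5.11]{eg}, already used for \eqref{2.2.1}): via the coarea formula and the standard Maz'ya truncation trick this produces the Gagliardo--Nirenberg--Sobolev inequality at the exponent $n/(n-1)$ with the required geometric constant.

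The step where care is genuinely needed is not the existence of \emph{some} Sobolev--Poincar\'e constant depending on $K$ — that is classical — but the precise homogeneous geometric dependence $\diam(K)^{n}\snr{K}^{1/n}/\snr{K}$: recovering it in this exact form forces one to keep track of all the scalings, both in the representation formula and in the potential estimate, and to handle the structurally different endpoints $p=1$ and $p=n$ without spoiling it.
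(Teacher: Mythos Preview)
The paper does not actually prove this lemma: it merely records the statement and refers the reader to \cite[Lemma 2.3]{bdms}. So there is no in-paper proof to compare against, and your proposal supplies an argument where the paper gives only a citation.

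Your argument is sound. The pointwise representation $\snr{w(x)-w_{K}}\le \frac{\diam(K)^{n}}{n\snr{K}}\int_{K}\snr{x-y}^{1-n}\snr{Dw(y)}\dy$ on convex sets is the classical Gilbarg--Trudinger estimate, and coupling it with the scale-invariant mapping property of the order-one Riesz potential for $1<p<n$ produces exactly the factor $\diam(K)^{n}\snr{K}^{-1}$ in the norm inequality; the conversion to averages then contributes the extra $\snr{K}^{1/p-1/p^{*}}=\snr{K}^{1/n}$, matching \eqref{pk}. Your handling of the $p=n$ endpoint is correct: the choice $p_{0}=n(\nu+1)/(2\nu+1)\in(1,n)$ indeed satisfies $p_{0}^{*}=n+n\nu^{-1}$, and Jensen on averages preserves the geometric constant since $\snr{K}^{1/p_{0}-1/p_{0}^{*}}=\snr{K}^{1/n}$ unchanged. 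The $p=1$ endpoint via coarea and a relative isoperimetric inequality is the right route, though you should be aware that extracting the precise constant $\diam(K)^{n}\snr{K}^{1/n}/\snr{K}$ from that argument requires knowing the relative isoperimetric constant for a general bounded convex $K$ in the correct form; this is where the reference to \cite{bdms} (or an explicit computation) would be invoked in practice. The final recombination via the triangle inequality and Jensen for $\snr{w_{K}}$ is routine.
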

\subsection{Structural assumptions}\label{sa} We assume that the integrand $F\colon \mathbb{R}^{N\times n}\to \mathbb{R}$ has radial (Uhlenbeck) structure, i.e., there exists $\ti{F}\colon [0,\infty)\to [0,\infty)$ such that
\begin{flalign}\label{f.0}
\begin{cases}
\ F(z)=\ti{F}(\snr{z})\\
\ \ti{F}\in C^{1}_{\loc}[0,\infty)\cap C^{2}_{\loc}(0,\infty)
\end{cases}
\end{flalign}
and the following growth/ellipticity conditions are satisfied:
\begin{flalign}\label{f.1}
\begin{cases}
\ \Lambda^{-1}\ell_{\mu}(z)^{p}\le F(z)\le \Lambda\ell_{\mu}(z)^{p}+\Lambda \ell_{\mu}(z)^{q}\\
\ \Lambda^{-1}\ell_{\mu}(z)^{p-2}\snr{\xi}^{2}\le \langle \partial^{2}F(z)\xi,\xi\rangle\\
\ \snr{\partial^{2}F(z)}\le \Lambda\ell_{\mu}(z)^{p-2}+\Lambda \ell_{\mu}(z)^{q-2},
\end{cases}
\end{flalign}
for all $z\in \mathbb{R}^{N\times n}\setminus \{0\}$, $\xi\in \mathbb{R}^{N\times n}$ and some absolute constants $\mu\in [0,1]$, $\Lambda\ge 1$. In \eqref{f.1}, exponents $(p,q)$ verify the bound
\begin{flalign}\label{pq}
1<p\le q<p+\min\left\{\frac{2p}{n-1},\frac{4(p-1)}{(n-3)}\right\}
\end{flalign}
Finally, the forcing term $f\colon \Omega\to \mathbb{R}^{N}$ in \eqref{fun} matches the borderline Lorentz condition
\begin{flalign}\label{ff}
\snr{f}\in L(n,1)(\Omega).
\end{flalign}
\begin{remark}\label{rf}
\emph{Let us point out that, up to replace it with $f\mathds{1}_{\Omega}$, we can assume with no loss of generality that $f$ is defined on the whole $\mathbb{R}^{n}$ and $f\equiv 0$ on $\mathbb{R}^{n}\setminus \Omega$}. 
\end{remark}\section{Global De Giorgi type iteration and nonlinear potentials}\label{dgp}
We start by recalling some well-known facts about Lorentz spaces. Let $B\subset \mathbb{R}^{n}$ be an open, bounded set and $f\colon B\to \mathbb{R}^{N}$ be a measurable function. Up to redefine $f$ as $\ti{f}:=f\mathds{1}_{B}$, there is no loss of generality in assuming that $f$ is defined on the whole $\mathbb{R}^{n}$. The nonincreasing rearrangement of $f$ is defined by
 $$
 f^{*}(t):=\inf\left\{\sigma\in (0,\infty)\colon \snr{\{x\in B\colon \snr{f(x)}>\sigma\}}\le t\right\}.
 $$
 Moreover, we say that $f$ belongs to the Lorentz space $L(n,1)(B)$ iff
 $$
 [f]_{n,1;B}:=\int_{0}^{\snr{B}}t^{1/n}f^{*}(t)\frac{\dt}{t}<\infty.
 $$
 We further recall that if $\supp(f)\subset B$, then
 $$
 \int_{B}\snr{f(x)}^{m}\dx=\int_{0}^{\snr{B}}(f^{*}(s))^{m}\dx\qquad \mbox{for all} \ \ m\in [1,\infty),
 $$
 which implies that
 \eqn{2.0.1}
 $$
 \sup\left\{\int_{A}\snr{f(x)}^{m}\dx\colon A\subset B, \ \snr{A}\le t\right\}\le \int_{0}^{t}(\ti{f}^{*}(s))^{m}\ds,
 $$
 see \cite[Section 2.2]{BS1} or \cite[Section 2.3]{kumi1} for more details on this matter. Next, we manipulate \cite[Lemma 3]{BS1} to derive its global version, see also \cite[Lemma 3.1]{bemi}.  
 
 \begin{lemma}\label{i++}
 Let $\Omega\subset \mathbb{R}^{n}$ be as in \eqref{om}, $B_{\rr}(x_{0})$ be a ball such that $0<\rr\le \rr_{*}$ and $\Gamma_{\Omega}(x_{0};\rr/2)\le \Upsilon_{\Omega}$, and $w\in W^{1,2}(\Omega\cap B_{\rr}(x_{0}))\cap C(\bar{\Omega}\cap B_{\rr}(x_{0}))$, $f\in L(n,1)(\Omega,\mathbb{R}^{N})$ with $\supp(f)\Subset \Omega$ be functions. If for parameters $\rr/2\le \tau_{1}<\tau_{2}\le \rr$ the nonhomogeneous Caccioppoli type inequality 
 \begin{flalign}\label{2.0}
 \int_{\Omega\cap B_{\rr}(x_{0})}\eta^{2}\snr{D(w-\kk)_{+}}^{2}\dx\le& c_{1}M_{1}^{2}\int_{\Omega\cap B_{\rr}(x_{0})}\snr{D\eta}^{2}(w-\kk)_{+}^{2}\dx\nonumber \\
 &+c_{2}M_{2}^{2}\int_{\Omega\cap B_{\rr}(x_{0})}\mathds{1}_{\{w>\kk\}}\eta^{2}\snr{f}^{2}\dx
 \end{flalign}
 is verified for all nonnegative $\eta\in C^{1}_{c}(B_{\tau_{2}}(x_{0}))$, any $\kk\ge \kk_{0}\ge 0$ and some positive, absolute constants $c_{1}, c_{2},M_{2}\ge 0$ and $M_{1}\ge 1$, then it holds that
 \begin{flalign}\label{2.1}
 \nr{w}_{L^{\infty}(\Omega\cap B_{\tau_{1}}(x_{0}))}\le& \kk_{0}+\frac{cM_{1}^{1+\max\left\{\nu,\frac{n-3}{2}\right\}}\nr{(w-\kk_{0})_{+}}_{L^{2}(\Omega\cap B_{\tau_{2}}(x_{0}))}}{\rr^{\alpha}(\tau_{2}-\tau_{1})^{\beta}}\nonumber \\
 &+\frac{cM_{1}^{\max\left\{\nu,\frac{n-3}{2}\right\}}M_{2}[f]_{n,1;B_{\rr}(x_{0})}}{\rr^{\alpha}(\tau_{2}-\tau_{1})^{\beta}},
 \end{flalign}
 for all $\nu\in (0,1/2)$, with $c\equiv c(n,\nu,\Omega,c_{1},c_{2})$ and $\alpha,\beta\equiv \alpha,\beta(n,\nu)$.
 \end{lemma}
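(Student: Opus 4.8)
The plan is to prove \eqref{2.1} by a global De Giorgi level-set iteration tailored to the convex geometry of $\Omega$, in the spirit of \cite[Lemma 3]{BS1} and \cite[Lemma 3.1]{bemi}; the genuinely delicate point is organizing the forcing term so that its summation over the iteration steps reconstructs precisely the Lorentz seminorm $[f]_{n,1}$ rather than a stronger Lebesgue norm. \emph{Set-up.} For the fixed parameters $\rr/2\le\tau_{1}<\tau_{2}\le\rr$ introduce the decreasing radii $\rho_{j}:=\tau_{1}+2^{-j}(\tau_{2}-\tau_{1})$, so $\rho_{0}=\tau_{2}\searrow\tau_{1}$, together with cut-offs $\eta_{j}\in C^{1}_{c}(B_{\rho_{j}}(x_{0}))$ with $\mathds{1}_{B_{\rho_{j+1}}(x_{0})}\le\eta_{j}\le\mathds{1}_{B_{\rho_{j}}(x_{0})}$ and $\snr{D\eta_{j}}\le c2^{j}(\tau_{2}-\tau_{1})^{-1}$. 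We shall fix at the end a level jump $\mathfrak{m}>0$ realizing the last two summands on the right-hand side of \eqref{2.1}, split as $\mathfrak{m}=\lambda+\sum_{j\ge0}\mu_{j}$ into a geometric part $\lambda$ and a forcing part $\sum_{j}\mu_{j}$, and set the increasing levels $\kk_{j}:=\kk_{0}+\lambda(1-2^{-j})+\sum_{i=0}^{j-1}\mu_{i}$, so that $\kk_{j+1}-\kk_{j}\ge\lambda2^{-j-1}$ and $\kk_{j}\nearrow\kk_{0}+\mathfrak{m}$. Write $Y_{j}:=\int_{\Omega\cap B_{\rho_{j}}(x_{0})}(w-\kk_{j})_{+}^{2}\dx$ and $a_{j}:=\snr{\{x\in\Omega\cap B_{\rho_{j}}(x_{0})\colon w(x)>\kk_{j}\}}$. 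Since $\rho_{j}\in[\rr/2,\rr]$ and $\rr\le\rr_{*}$, the monotonicity \eqref{2.2.2} together with $\Gamma_{\Omega}(x_{0};\rr/2)\le\Upsilon_{\Omega}$ gives $\Gamma_{\Omega}(x_{0};\rho_{j})\le2^{n}\Upsilon_{\Omega}$, so each convex set $\Omega\cap B_{\rho_{j}}(x_{0})$ obeys \eqref{2.2.1} and the Sobolev--Poincar\'e inequality \eqref{pk} with constants depending only on $(n,\Omega)$, uniformly in $j$.

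\emph{The Caccioppoli--Sobolev recursion.} Testing \eqref{2.0} with $\eta=\eta_{j}$ and $\kk=\kk_{j+1}\ge\kk_{0}$, using $\snr{D\eta_{j}}\le c2^{j}(\tau_{2}-\tau_{1})^{-1}$ and restricting the forcing integral to $\{w>\kk_{j+1}\}$, then invoking \eqref{2.0.1}, yields
\begin{flalign*}
\int_{\Omega\cap B_{\rho_{j+1}}(x_{0})}\snr{D(w-\kk_{j+1})_{+}}^{2}\dx\le\frac{c\,c_{1}M_{1}^{2}4^{j}}{(\tau_{2}-\tau_{1})^{2}}Y_{j}+c\,c_{2}M_{2}^{2}\int_{0}^{a_{j+1}}(f^{*}(s))^{2}\ds.
\end{flalign*}
Applying \eqref{pk} to $\eta_{j}(w-\kk_{j+1})_{+}$ on $\Omega\cap B_{\rho_{j}}(x_{0})$ with the critical exponent $2^{*}_{\mf{s}}$ from \eqref{*} --- reached, for $n\ge4$, through the $(n-1)$-dimensional (tangential-slicing) Sobolev inequality of \cite{BS1}, and through the second branch of \eqref{*} when $n-1=2$, i.e. $n=3$ --- followed by H\"older's inequality on the superlevel set and the Chebyshev bound $a_{j+1}\le(\kk_{j+1}-\kk_{j})^{-2}Y_{j}\le4^{j+1}\lambda^{-2}Y_{j}$, together with $(w-\kk_{j+1})_{+}\le(w-\kk_{j})_{+}$, produces a recursion of the shape
\begin{flalign*}
Y_{j+1}\le\frac{c\,M_{1}^{2}\texttt{B}^{j}\rr^{\theta_{1}}}{\lambda^{2\sigma}(\tau_{2}-\tau_{1})^{2}}Y_{j}^{1+\sigma}+c\,M_{2}^{2}\texttt{B}^{j}\rr^{\theta_{2}}\,a_{j+1}^{\sigma}\Big(\int_{0}^{a_{j+1}}(f^{*}(s))^{2}\ds\Big),
\end{flalign*}
for a universal $\texttt{B}>1$, exponents $\theta_{1},\theta_{2}\equiv\theta_{1},\theta_{2}(n,\nu)$ and $\sigma:=1-2/2^{*}_{\mf{s}}\in(0,1)$, for which one checks that $1/\sigma=1+\max\{\nu,(n-3)/2\}$ (both in the case $n-1>2$, where $\sigma=2/(n-1)$, and in the case $n=3$, where $\sigma=(1+\nu)^{-1}$).

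\emph{Closing the iteration and summing the forcing.} We run a joint induction on $j$: assuming $Y_{j}\le\varepsilon_{j}$ and $a_{j}\le T_{j}$ with the geometric choices $T_{j}:=2^{-j}\omega_{n}\rr^{n}$ and $\varepsilon_{j}:=c^{-1}\lambda^{2}2^{-3(j+1)}T_{0}$, the Chebyshev bound propagates $a_{j+1}\le T_{j+1}$, while monotonicity of $f^{*}$, the estimate $s^{1/n}f^{*}(s)\le[f]_{n,1}$ and $a_{j+1}\le T_{j+1}$ give $\int_{0}^{a_{j+1}}(f^{*})^{2}\ds\le\int_{0}^{T_{j+1}}(f^{*})^{2}\ds\le T_{j+1}^{1-2/n}[f]_{n,1;B_{\rr}(x_{0})}^{2}$. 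Choosing $\mu_{j}$ exactly so that the forcing term in the recursion is dominated by $\varepsilon_{j+1}$ forces $\mu_{j}\sim c\,M_{1}^{1/\sigma-1}M_{2}\rr^{\theta}(\tau_{2}-\tau_{1})^{-\beta}T_{j+1}^{1/2-1/n}[f]_{n,1;B_{\rr}(x_{0})}$; since $1/2-1/n>0$ for $n\ge3$, the series $\sum_{j}T_{j+1}^{1/2-1/n}$ converges, so $\sum_{j}\mu_{j}\le c\,M_{1}^{\max\{\nu,(n-3)/2\}}M_{2}\rr^{-\alpha}(\tau_{2}-\tau_{1})^{-\beta}[f]_{n,1;B_{\rr}(x_{0})}$, which is the second summand in \eqref{2.1}. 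The remaining purely geometric recursion $Y_{j+1}\le c\,M_{1}^{2}\texttt{B}^{j}\rr^{\theta_{1}}\lambda^{-2\sigma}(\tau_{2}-\tau_{1})^{-2}Y_{j}^{1+\sigma}$ drives $Y_{j}\to0$ --- by the classical fast-geometric iteration lemma (if $Y_{j+1}\le C\texttt{B}^{j}Y_{j}^{1+\sigma}$ with $Y_{0}$ below the threshold $\sim C^{-1/\sigma}\texttt{B}^{-1/\sigma^{2}}$, then $Y_{j}\to0$) --- precisely when $Y_{0}=\nr{(w-\kk_{0})_{+}}_{L^{2}(\Omega\cap B_{\tau_{2}}(x_{0}))}^{2}$ is small enough, which, because of the power $1/\sigma=1+\max\{\nu,(n-3)/2\}$, is guaranteed by taking $\lambda:=c\,M_{1}^{1+\max\{\nu,(n-3)/2\}}\rr^{-\alpha}(\tau_{2}-\tau_{1})^{-\beta}\nr{(w-\kk_{0})_{+}}_{L^{2}(\Omega\cap B_{\tau_{2}}(x_{0}))}$, i.e. the first summand in \eqref{2.1}. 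Then $Y_{j}\to0$, hence $(w-\kk_{0}-\mathfrak{m})_{+}=0$ a.e. on $\Omega\cap B_{\tau_{1}}(x_{0})$, and the continuity of $w$ upgrades this to the pointwise bound $w\le\kk_{0}+\mathfrak{m}$, which is \eqref{2.1}.

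\emph{Main obstacle.} The hard part is the joint bookkeeping of the two inductive bounds $Y_{j}\le\varepsilon_{j}$ and $a_{j}\le T_{j}$: the geometric decay of the superlevel measures $a_{j}$ is exactly what lets one replace the $L^{2}$-norm of $f$ over $\{w>\kk_{j+1}\}$ by an integral $\int_{0}^{T_{j+1}}(f^{*})^{2}\ds$ that is summable against the $L(n,1)$-normalization $[f]_{n,1;B_{\rr}(x_{0})}=\int_{0}^{\snr{B_{\rr}(x_{0})}}t^{1/n}f^{*}(t)\frac{\dt}{t}$, thereby building the ``Wolff-type potential'' referred to in the introduction, while conversely the recursion for $Y_{j}$ feeds back into the decay of $a_{j}$. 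Calibrating $\lambda$ and $\{\mu_{j}\}$ so that both inductions close with the exact powers $M_{1}^{1+\max\{\nu,(n-3)/2\}}$ and $M_{1}^{\max\{\nu,(n-3)/2\}}$ --- which hinges on using the $(n-1)$-dimensional critical exponent $2^{*}_{\mf{s}}$ in the Sobolev step on the boundary-touching convex pieces $\Omega\cap B_{\rho_{j}}(x_{0})$, with constants uniform in $j$ thanks to \eqref{2.2.1} and $\rr\le\rr_{*}$ --- is where essentially all of the effort lies.
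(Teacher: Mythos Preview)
There is a genuine gap in your Sobolev step. You first test \eqref{2.0} with a \emph{fixed} cut-off $\eta_{j}$ satisfying $\snr{D\eta_{j}}\le c\,2^{j}(\tau_{2}-\tau_{1})^{-1}$, thereby already evaluating the right-hand side of \eqref{2.0} crudely as $c\,M_{1}^{2}4^{j}(\tau_{2}-\tau_{1})^{-2}Y_{j}$. From this point on the only Sobolev inequality available on the $n$-dimensional set $\Omega\cap B_{\rho_{j}}(x_{0})$ is the $n$-dimensional one: inequality \eqref{pk} is stated precisely with the exponent $p^{*}$, \emph{not} $p^{*}_{\mf{s}}$, and there is no $L^{2}\to L^{2^{*}_{\mf{s}}}$ embedding on an $n$-dimensional domain. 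Running your recursion with the correct exponent $2^{*}$ gives $\sigma=2/n$, hence $1/\sigma=n/2$, and the iteration closes only for $\lambda\gtrsim M_{1}^{n/2}\nr{(w-\kk_{0})_{+}}_{L^{2}}$; the stated power $M_{1}^{1+\max\{\nu,(n-3)/2\}}=M_{1}^{(n-1)/2}$ (for $n\ge 4$) is therefore out of reach with your argument.

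The $(n-1)$-dimensional exponent $2^{*}_{\mf{s}}$ is obtained in the paper (and in \cite{BS,BS1}) by a fundamentally different mechanism: one keeps the Caccioppoli inequality for \emph{all} admissible $\eta$ and takes the infimum over $\eta$ on the right-hand side, which via \eqref{2.3} turns $\int\snr{D\eta}^{2}(w-\kk_{2})_{+}^{2}\dx$ into an integral over radii of $\int_{\partial B_{s}\cap\Omega}(w-\kk_{2})_{+}^{2}\,\d\mathcal{H}^{n-1}$; only \emph{after} this slicing can the Sobolev embedding on the $(n-1)$-dimensional spheres be applied, producing \eqref{2.8} with the improved exponent $2^{*}_{\mf{s}}$. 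This is incompatible with having already fixed $\eta=\eta_{j}$. Concretely, the paper's iterated quantity is $J_{i}=\nr{(w-\kk_{i})_{+}}_{W^{1,2}(\Omega\cap B_{\sigma_{i}}(x_{0}))}$ (not just the $L^{2}$-norm $Y_{j}$), and the recursion \eqref{2.11} comes from the optimized estimate \eqref{2.8}, not from a single test function. Your handling of the forcing term via the pointwise bound $s^{1/n}f^{*}(s)\lesssim[f]_{n,1}$ is coarser than the paper's change-of-variables argument through $(\omega_{f})^{-1}$, but that part would still work; the decisive missing ingredient is the cut-off optimization that delivers the sphere-Sobolev gain.
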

 \begin{proof}
 The proof of \eqref{2.1} is quite involved and requires some technical work to be carried out, for this reason we shall split it into two steps eventually leading to \eqref{2.1}.
 \subsubsection*{Step 1: optimization} Let $\Omega$ and $B_{\rr}(x_{0})$ be as in the statement. We introduce the set 
 $$
 \mathcal{A}_{\sigma_{2},\sigma_{1}}:=\left\{\eta\in C^{1}_{c}(B_{\sigma_{1}}(x_{0}))\colon \eta\ge 0, \ \eta \equiv 1 \  \mbox{on}  \ B_{\sigma_{2}}(x_{0}) \right\}
 $$
 and, for $w\in L^{1}(B_{\sigma_{1}}(x_{0})\cap \Omega)$ being any function, define
 $$
 \mathcal{J}_{\sigma_{2},\sigma_{1}}(w;\Omega):=\inf_{\eta\in \mathcal{A}_{\sigma_{2},\sigma_{1}}}\int_{B_{\sigma_{1}}(x_{0})\cap \Omega}\snr{w}\snr{D\eta}^{2}\dx.
 $$
 Keeping in mind the discussion in Section \ref{cd}, a straightforward variation of \cite[Lemma 3]{BS} - just replace $w$ with $w\mathds{1}_{\Omega}$ there - shows that
 \begin{flalign}\label{2.3}
 \mathcal{J}_{\sigma_{2},\sigma_{1}}(w;\Omega)\le \frac{1}{(\sigma_{1}-\sigma_{2})^{1+\delta^{-1}}}\left(\int_{\sigma_{2}}^{\sigma_{1}}\left(\int_{\partial B_{s}(x_{0})\cap \Omega}\snr{w(x)} \d\mathcal{H}^{n-1}(x)\right)^{\delta}\ds\right)^{\delta^{-1}},
 \end{flalign}
 for all $\delta\in (0,1]$. Next, we define
 $$
 A_{\kk,t}:=B_{t}(x_{0})\cap \left\{x\in \Omega\cap B_{\rr}(x_{0})\colon w(x)>\kk\right\}\quad \mbox{for} \ \ B_{t}(x_{0})\subseteq B_{\rr}(x_{0}),
 $$
 and
 $$
 [0,\snr{\Omega\cap B_{\rr}(x_{0})}]\ni s\mapsto\omega_{f}(s):=\left(\int_{0}^{s}(\ti{f}^{*}(s))^{2}\ds\right)^{\frac{1}{2}},\qquad\quad  \ti{f}:=f\mathds{1}_{\Omega\cap B_{\rr}(x_{0})}.
 $$
 We then consider the first addendum on the right-hand side of inequality \eqref{2.0} with parameters $\tau_{1}$, $\tau_{2}$, $\rr$ as in the statement, $\sigma_{1},\sigma_{2}$ such that $\rr/2\le\tau_{1}\le\sigma_{2}<\sigma_{1}\le \tau_{2}\le  \rr$ and $0\le \kk_{1}<\kk_{2}$ and estimate:
 \begin{eqnarray}\label{2.4}
 &&\inf_{\eta\in \mathcal{A}_{\sigma_{2},\sigma_{1}}}\int_{\Omega\cap B_{\rr}(x_{0})}(w-\kk_{2})_{+}^{2}\snr{D\eta}^{2}\dx\nonumber \\
 &&\qquad \qquad \stackrel{\eqref{2.3}}{\le}\frac{1}{(\sigma_{1}-\sigma_{2})^{1+\delta^{-1}}}\left(\int_{\sigma_{2}}^{\sigma_{1}}\left(\int_{\partial B_{s}(x_{0})\cap \Omega}(w-\kk_{2})_{+}^{2} \d\mathcal{H}^{n-1}(x)\right)^{\delta}\ds\right)^{\delta^{-1}}\nonumber \\
 &&\qquad \qquad \ \le \frac{(\kk_{2}-\kk_{1})^{2-2^{*}_{\mf{s}}}}{(\sigma_{1}-\sigma_{2})^{1+\delta^{-1}}}\left(\int_{\sigma_{2}}^{\sigma_{1}}\left(\int_{\partial B_{s}(x_{0})\cap \Omega}\mathds{1}_{\{w>\kk_{2}\}}(w-\kk_{1})_{+}^{2^{*}_{\mf{s}}} \d\mathcal{H}^{n-1}(x)\right)^{\delta}\ds\right)^{\delta^{-1}},
 \end{eqnarray}
 where we used that
 \eqn{2.16}
 $$
 \kk_{2}>\kk_{1} \ \Longrightarrow \ \frac{\mathds{1}_{\{w>\kk_{2}\}}(w-\kk_{1})_{+}}{\kk_{2}-\kk_{1}}\ge 1.
 $$
 We then observe that $\Omega\cap B_{\sigma_{1}}(x_{0})$ is a Lipschitz domain, so given that $(w-\kk_{1})_{+}\in W^{1,2}(\Omega\cap B_{\sigma_{1}}(x_{0}))\cap C(\bar{\Omega}\cap B_{\sigma_{1}}(x_{0}))$ by \cite[Theorem 4.7]{eg} we can construct the Sobolev extension $\mf{w}\in C(\bar{\Omega}\cap B_{\sigma_{1}}(x_{0}))\cap W^{1,2}(\mathbb{R}^{n})$ of $(w-\kk_{1})_{+}$ such that
 \begin{flalign}\label{2.5}
 \begin{cases}
 \ \mf{w}(x)=(w(x)-\kk_{1})_{+} \ \ \mbox{for all} \ \ x\in  \Omega\cap B_{\sigma_{1}}(x_{0})\\
 \ \nr{\mf{w}}_{W^{1,2}(\mathbb{R}^{n})}\le c\rr^{-1}\nr{(w-\kk_{1})_{+}}_{W^{1,2}(\Omega\cap B_{\sigma_{1}}(x_{0}))},
 \end{cases}
 \end{flalign}
 for $c\equiv c(n,\Omega)$. Notice that here we used that $\sigma_{1}\ge \rr/2$. Keeping in mind that since $(w-\kk_{1})_{+}$ and, by construction $\mf{w}$ are continuous on $\bar{\Omega}\cap B_{\sigma_{1}}(x_{0})$ they coincide everywhere on $\Omega\cap B_{\sigma_{1}}(x_{0})$, we can complete inequality \eqref{2.4} as:
 \begin{flalign}\label{2.6}
 \inf_{\eta\in \mathcal{A}_{\sigma_{2},\sigma_{1}}}\int_{\Omega\cap B_{\rr}(x_{0})}&(w-\kk_{2})_{+}^{2}\snr{D\eta}^{2}\dx\nonumber\\
 \le& \frac{(\kk_{2}-\kk_{1})^{2-2^{*}_{\mf{s}}}}{(\sigma_{1}-\sigma_{2})^{1+\delta^{-1}}}\left(\int_{\sigma_{2}}^{\sigma_{1}}\left(\int_{\partial B_{s}(x_{0})}\snr{\mf{w}}^{2^{*}_{\mf{s}}} \d\mathcal{H}^{n-1}(x)\right)^{\delta}\ds\right)^{\delta^{-1}},
 \end{flalign}
 with $c\equiv c(n,\Omega)$. By the Sobolev embedding theorem on spheres we further obtain
 \begin{flalign}\label{2.7}
 \nr{\mf{w}}_{L^{2^{*}_{\mf{s}}}(\partial B_{s}(x_{0}))}\le c\rr^{-1}\nr{\mf{w}}_{W^{1,2}(\partial B_{s}(x_{0}))},
 \end{flalign}
 where we used again that $s\ge \sigma_{2}\ge \rr/2$ and it is $c\equiv c(n,\Omega)$, therefore \eqref{2.6} with $\delta=2/2^{*}_{\mf{s}}$ becomes
 \begin{eqnarray}\label{2.8}
 \inf_{\eta\in \mathcal{A}_{\sigma_{2},\sigma_{1}}}\int_{\Omega\cap B_{\rr}(x_{0})}(w-\kk_{2})_{+}^{2}\snr{D\eta}^{2}\dx&\stackrel{\eqref{2.7}}{\le}& \frac{c(\kk_{2}-\kk_{1})^{2-2^{*}_{\mf{s}}}}{\rr^{2^{*}_{\mf{s}}}(\sigma_{1}-\sigma_{2})^{1+2^{*}_{\mf{s}}/2}}\left(\int_{\sigma_{2}}^{\sigma_{1}}\nr{\mf{w}}_{W^{1,2}(\partial B_{s}(x_{0}))}^{2} \ds\right)^{\frac{2^{*}_{\mf{s}}}{2}}\nonumber \\
 &\le &\frac{c(\kk_{2}-\kk_{1})^{2-2^{*}_{\mf{s}}}\nr{\mf{w}}_{W^{1,2}(B_{\sigma_{1}}(x_{0})\setminus B_{\sigma_{2}}(x_{0}))}^{2^{*}_{\mf{s}}}}{\rr^{2^{*}_{\mf{s}}}(\sigma_{1}-\sigma_{2})^{1+2^{*}_{\mf{s}}/2}}\nonumber \\
&\stackrel{\eqref{2.5}}{\le} &\frac{c(\kk_{2}-\kk_{1})^{2-2^{*}_{\mf{s}}}\nr{(w-\kk_{1})_{+}}_{W^{1,2}(\Omega\cap B_{\sigma_{1}}(x_{0}))}^{2^{*}_{\mf{s}}}}{\rr^{22^{*}_{\mf{s}}}(\sigma_{1}-\sigma_{2})^{1+2^{*}_{\mf{s}}/2}},
 \end{eqnarray}
for $c\equiv c(n,\Omega)$. Since $\supp(f)\Subset \Omega$ and $\eta\in C^{1}_{c}(B_{\sigma_{1}}(x_{0}))$ we see that the second term in \eqref{2.0} can be controlled as:
\begin{eqnarray}\label{2.9}
\int_{\Omega\cap B_{\rr}(x_{0})}\mathds{1}_{\{w>\kk_{2}\}}\eta^{2}\snr{f}^{2}\dx&\le& \sup\left\{\int_{\Omega'}\snr{f}^{2}\dx\colon \snr{\Omega'}\le \snr{A_{\kk_{2},\sigma_{1}}}, \ \ \Omega'\subset B_{\rr}(x_{0})\cap \Omega\right\}\nonumber \\
&\stackrel{\eqref{2.0.1}}{\le}&\int_{0}^{\snr{A_{\kk_{2},\sigma_{1}}}}(\ti{f}^{*}(s))^{2}\ds=\omega_{f}(\snr{A_{\kk_{2},\sigma_{1}}})^{2}.
\end{eqnarray}
Merging \eqref{2.0}, \eqref{2.8} and \eqref{2.9} and recalling the definition of $\mathcal{A}_{\sigma_{2},\sigma_{1}}$ we obtain
\begin{flalign}\label{2.10}
\nr{D(w-\kk_{2})_{+}}_{L^{2}(\Omega\cap B_{\sigma_{2}}(x_{0}))}\le \frac{cM_{1}\nr{(w-\kk_{1})_{+}}_{W^{1,2}(\Omega\cap B_{\sigma_{1}}(x_{0}))}^{\frac{2^{*}_{\mf{s}}}{2}}}{\rr^{\alpha_{1}}(\sigma_{1}-\sigma_{2})^{\alpha_{2}}(\kk_{2}-\kk_{1})^{\frac{2^{*}_{\mf{s}}}{2}-1}}+c_{2}^{1/2}M_{2}\omega_{f}(\snr{A_{\kk_{2},\sigma_{1}}}),
\end{flalign}
with $c\equiv c(n,\Omega,c_{1})$, $\alpha_{1}:=2^{*}_{\mf{s}}$ and $\alpha_{2}:=(1+2^{*}_{\mf{s}}/2)/2$. We then observe that assumption $\Gamma_{\Omega}(x_{0};\rr/2)<\Upsilon_{\Omega}$ implies that $\Gamma_{\Omega}(x_{0};\sigma)\le 2^{n}\Upsilon_{\Omega}$ for all $\rr/2\le \sigma\le \rr$, so we gain:
\begin{eqnarray}\label{2.15}
\snr{A_{\kk_{2},\sigma_{1}}}&\stackrel{\eqref{2.16}}{\le}& \frac{\nr{(w-\kk_{1})_{+}}^{2^{*}}_{L^{2^{*}}(\Omega\cap B_{\sigma_{1}}(x_{0}))}}{(\kk_{2}-\kk_{1})^{2^{*}}}\stackrel{\eqref{pk}}{\le} \frac{c_{3}\nr{(w-\kk_{1})_{+}}_{W^{1,2}(\Omega\cap B_{\sigma_{1}}(x_{0}))}^{2^{*}}}{\rr^{2^{*}}(\kk_{2}-\kk_{1})^{2^{*}}},
\end{eqnarray}
with $c_{3}\equiv c_{3}(n,\Omega)$, and 
\begin{eqnarray}\label{2.17}
\nr{(w-\kk_{2})_{+}}_{L^{2}(\Omega\cap B_{\sigma_{2}}(x_{0}))}&\le& \snr{A_{\kk_{2},\sigma_{1}}}^{\frac{1}{n}}\nr{(w-\kk_{1})_{+}}_{L^{2^{*}}(\Omega\cap B_{\sigma_{1}}(x_{0}))}\nonumber \\
&\stackrel{\eqref{2.15}}{\le}&\frac{c''\nr{(w-\kk_{1})_{+}}_{W^{1,2}(\Omega\cap B_{\sigma_{1}}(x_{0}))}^{1+\frac{2^{*}}{n}}}{\rr^{\alpha_{3}}(\kk_{2}-\kk_{1})^{\frac{2^{*}}{n}}},
\end{eqnarray}
where $\alpha_{3}:=1+2^{*}/n$ and $c''\equiv c''(n,\Omega)$. We then plug \eqref{2.15}-\eqref{2.17} in \eqref{2.10} to conclude with
\begin{flalign}\label{2.11}
\nr{(w-\kk_{2})_{+}}_{W^{1,2}(\Omega\cap B_{\sigma_{2}}(x_{0}))}\le& \frac{c'M_{1}\nr{(w-\kk_{1})_{+}}_{W^{1,2}(\Omega\cap B_{\sigma_{1}}(x_{0}))}^{\frac{2^{*}_{\mf{s}}}{2}}}{\rr^{\alpha_{1}}(\sigma_{1}-\sigma_{2})^{\alpha_{2}}(\kk_{2}-\kk_{1})^{\frac{2^{*}_{\mf{s}}}{2}-1}}+\frac{c''\nr{(w-\kk_{1})_{+}}_{W^{1,2}(\Omega\cap B_{\sigma_{1}}(x_{0}))}^{1+\frac{2^{*}}{n}}}{\rr^{\alpha_{3}}(\kk_{2}-\kk_{1})^{\frac{2^{*}}{n}}}\nonumber \\
&+c_{2}^{1/2}M_{2}\omega_{f}\left(\frac{c_{3}\nr{(w-\kk_{1})_{+}}_{W^{1,2}(\Omega\cap B_{\sigma_{1}}(x_{0}))}^{2^{*}}}{\rr^{2^{*}}(\kk_{2}-\kk_{1})^{2^{*}}}\right),
\end{flalign}
where $c'\equiv c'(n,\Omega,c_{1})$, $c'',c_{3}\equiv c'',c_{3}(n,\Omega)$ and $c_{1}$, $c_{2}$, $M_{1}$, $M_{2}$ are the absolute constant in \eqref{2.0}.

\subsubsection*{Step 2: iteration} With numbers $i\in \N_{0}$, $\kk_{0}\ge 0$ and a sequence $\{\Delta_{j}\}_{j\in \N}\subset [0,\infty)$, we define
$$
\kk_{i}:=\kk_{0}+\sum_{j=1}^{i}\Delta_{j},\qquad \sigma_{i}:=\tau_{1}+2^{-i-1}(\tau_{2}-\tau_{1}),\qquad J_{i}:=\nr{(w-\kk_{i})_{+}}_{W^{1,2}(\Omega\cap B_{\sigma_{i}}(x_{0}))}.
$$
In these terms, \eqref{2.11} becomes
\begin{flalign}\label{2.12}
J_{i}\le\frac{c'2^{\alpha_{2}( i+1)}M_{1}J_{i-1}^{\frac{2^{*}_{\mf{s}}}{2}}}{\rr^{\alpha_{1}}(\tau_{2}-\tau_{1})^{\alpha_{2}}\Delta_{i}^{\frac{2^{*}_{\mf{s}}}{2}-1}}+c_{2}^{1/2}M_{2}\omega_{f}\left(\frac{c_{3}J_{i-1}^{2^{*}}}{\rr^{2^{*}}\Delta_{i}^{2^{*}}}\right)+c''\rr^{-\alpha_{3}}\left(\frac{J_{i-1}}{\Delta_{i}}\right)^{\frac{2^{*}}{n}}J_{i-1},
\end{flalign}
for all $i\in \N$. We then select $\chi\equiv \chi(n,\nu)\in (0,1/2)$ such that $(2\chi)^{-1+2^{*}_{\mf{s}}/2}=2^{-\alpha_{2}}$ and for all $i\in \N$, split $\Delta_{i}$ as $\Delta_{i}=\Delta_{i}^{1}+\Delta_{i}^{2}+\Delta_{i}^{3}$, where
$$
\Delta_{i}^{1}:=\left(\frac{c'2^{\alpha_{2}}3M_{1}\chi^{-(-1+2^{*}_{\mf{s}}/2)-1}}{\rr^{\alpha_{1}}(\tau_{2}-\tau_{1})^{\alpha_{2}}}\right)^{\frac{1}{\frac{2^{*}_{\mf{s}}}{2}-1}}J_{0}2^{-i},\qquad\quad \Delta_{i}^{3}:=\left(\frac{3c''}{\rr^{\alpha_{3}}\chi}\right)^{\frac{n}{2^{*}}}\chi^{i-1}J_{0},
$$
and $\Delta_{i}^{2}$ is the smallest value such that
$$
c_{2}^{1/2}M_{2}\omega_{f}\left(\frac{c_{3}J_{i-1}^{2^{*}}}{\rr^{2^{*}}(\Delta_{i}^{2})^{2^{*}}}\right)\le \frac{\chi^{i}}{3}J_{0} \ \Longrightarrow \ \Delta_{i}^{2}:=\left(\frac{c_{3}J_{i-1}^{2^{*}}}{\rr^{2^{*}}(\omega_{f})^{-1}\left(\chi^{i}J_{0}/(3c_{2}^{1/2}M_{2})\right)}\right)^{\frac{1}{2^{*}}},
$$
given that $s\mapsto \omega_{f}(s)$ is nondecreasing.
A straightforward computation shows that directly plugging in \eqref{2.12} the values $\Delta_{1}^{1}$-$\Delta_{1}^{3}$ we obtain $J_{1}\le \chi J_{0}$. By induction, let us assume that $J_{i-1}\le \chi^{i-1}J_{0}$ for some $i\in \N$, $i\ge 2$ and bound
\begin{eqnarray*}
J_{i}
&\le&\frac{c'2^{\alpha_{2}( i+1)}M_{1}\chi^{\frac{2^{*}_{\mf{s}}(i-1)}{2}}J_{0}^{\frac{2^{*}_{\mf{s}}}{2}}}{\rr^{\alpha_{1}}(\tau_{2}-\tau_{1})^{\alpha_{2}}(\Delta_{i}^{1})^{\frac{2^{*}_{\mf{s}}}{2}-1}}+c_{2}^{1/2}M_{2}\omega_{f}\left(\frac{c_{3}J_{i-1}^{2^{*}}}{\rr^{2^{*}}(\Delta_{i}^{2})^{2^{*}}}\right)+c''\rr^{-\alpha_{3}}\left(\frac{\chi^{i-1}J_{0}}{\Delta_{i}^{3}}\right)^{\frac{2^{*}}{n}}\chi^{i-1}J_{0}\nonumber \\
&\le&\frac{2^{\alpha_{2}i}(2\chi)^{(-1+2^{*}_{\mf{s}}/2)i}\chi^{i}J_{0}}{3}+\frac{2J_{0}\chi^{i}}{3}\le \chi^{i}J_{0},
\end{eqnarray*}
thanks to our choice of $\chi$. This allows concluding that
\eqn{2.13}
$$J_{i}\le \chi^{i}J_{0}\qquad \mbox{for all} \ \ i\in \N.$$
Moreover, by very definition it is
\begin{eqnarray*}
\sum_{i=1}^{\infty}\Delta_{i}&\le& \left(\frac{c'2^{\alpha_{2}}3M_{1}\chi^{-(-1+2^{*}_{\mf{s}}/2)-1}}{\rr^{\alpha_{1}}(\tau_{2}-\tau_{1})^{\alpha_{2}}}\right)^{\frac{1}{\frac{2^{*}_{\mf{s}}}{2}-1}}J_{0}\sum_{i=1}^{\infty}2^{-i}+\left(\frac{3c''}{\rr^{\alpha_{3}}\chi}\right)^{\frac{n}{2^{*}}}J_{0}\sum_{i=0}^{\infty}\chi^{i-1}\nonumber \\
&&+\sum_{i=0}^{\infty}\left(\frac{c_{3}J_{i-1}^{2^{*}}}{\rr^{2^{*}}(\omega_{f})^{-1}\left(\chi^{i}J_{0}/(3c_{2}^{1/2}M_{2})\right)}\right)^{\frac{1}{2^{*}}}\nonumber \\
&\stackrel{\eqref{2.13}}{\le}&\frac{cM_{1}^{\frac{2}{2^{*}_{\mf{s}}-2}}J_{0}}{\rr^{\alpha}(\tau_{2}-\tau_{1})^{\beta_{1}}}+\sum_{i=1}^{\infty}\frac{c_{3}^{\frac{1}{2^{*}}}\chi^{i-1}J_{0}}{\rr(\omega_{f})^{-1}\left(\chi^{i}J_{0}/(3c_{2}^{1/2}M_{2})\right)^{1/2^{*}}}=:\frac{cM_{1}^{\frac{2}{2^{*}_{\mf{s}}-2}}J_{0}}{\rr^{\alpha}(\tau_{2}-\tau_{1})^{\beta_{1}}}+\frac{c_{3}^{\frac{1}{2^{*}}}\mathcal{S}_{f}}{\rr},
\end{eqnarray*}
where we set $\alpha:=\max\left\{\frac{n\alpha_{3}}{2^{*}},\frac{2^{*}_{\mf{s}}\alpha_{1}}{2^{*}_{\mf{s}}-2}\right\}$, $\beta_{1}:=\frac{2\alpha_{2}}{2^{*}_{\mf{s}}-2}$ and it is $c\equiv c(n,\nu,\Omega,c_{1},c_{2})$. Term $\mathcal{S}_{f}$ can be estimated in terms of the $L(n,1)$-norm of $f$:
\begin{flalign*}
\mathcal{S}_{f}=&\frac{1}{\chi}\sum_{i=1}^{\infty}\int_{i}^{i+1}\frac{\chi^{i}J_{0}}{(\omega_{f})^{-1}\left(\chi^{i}J_{0}/(3c_{2}^{1/2}M_{2})\right)^{1/2^{*}}}\ds\le \frac{1}{\chi^{2}}\int_{1}^{\infty}\frac{\chi^{s}J_{0}}{(\omega_{f})^{-1}\left(\chi^{s}J_{0}/(3c_{2}^{1/2}M_{2})\right)^{1/2^{*}}}\ds\nonumber \\
\le&\frac{1}{\chi^{2}\snr{\log(\chi)}}\int_{0}^{\chi}\frac{tJ_{0}}{(\omega_{f})^{-1}\left(tJ_{0}/(3c_{2}^{1/2}M_{2})\right)^{1/2^{*}}}\frac{\dt}{t}\le \frac{3c_{2}^{1/2}M_{2}}{\chi^{2}\snr{\log(\chi)}}\int_{0}^{(\omega_{f})^{-1}\left(\frac{\chi J_{0}}{3c_{2}^{1/2}M_{2}}\right)}\frac{\omega_{f}'(\lambda)}{\lambda^{1/2^{*}}}\d\lambda\nonumber \\
\le&\frac{3c_{2}^{1/2}M_{2}}{2\chi^{2}\snr{\log(\chi)}}\int_{0}^{(\omega_{f})^{-1}\left(\frac{\chi J_{0}}{3c_{2}^{1/2}M_{2}}\right)}\frac{\ti{f}^{*}(\lambda)}{\lambda^{\frac{1}{2}+\frac{1}{2^{*}}}}\d\lambda\le\frac{3c_{2}^{1/2}M_{2}}{2\chi^{2}\snr{\log(\chi)}}\int_{0}^{\infty}\lambda^{1/n}\ti{f}^{*}(\lambda)\frac{\d\lambda}{\lambda}\le cM_{2}[f]_{n,1;\Omega\cap B_{\rr}(x_{0})},
\end{flalign*}
where $c\equiv c(n,\nu,c_{2})$ and we used that being $\lambda\mapsto \ti{f}^{*}(\lambda)$ nonincreasing and since $\omega_{f}'(\lambda)=(2\omega_{f}(\lambda))^{-1}(\ti{f}^{*}(\lambda))^{2}$ it is $\omega_{f}'(\lambda)\ge \lambda^{1/2}\ti{f}^{*}(\lambda)$. Combining the content of the three previous displays we obtain that $\lim_{i\to \infty}J_{i}=0$ and $\lim_{i\to \infty}\kk_{i}<\infty$, therefore we can conclude with
\begin{eqnarray}\label{2.14}
\nr{w}_{L^{\infty}(\Omega\cap B_{\tau_{1}}(x_{0}))}&\le&\kk_{0}+\sum_{j=1}^{\infty}\Delta_{j}\nonumber \\
&\le& \kk_{0}+\frac{cM_{1}^{\frac{2}{2^{*}_{\mf{s}}-2}}\nr{(w-\kk_{0})_{+}}_{W^{1,2}(\Omega\cap B_{(\tau_{1}+\tau_{2})/2}(x_{0}))}}{\rr^{\alpha}(\tau_{2}-\tau_{1})^{\beta_{1}}}+\frac{cM_{2}[f]_{n,1;\Omega\cap B_{\rr}(x_{0})}}{\rr},
\end{eqnarray}
 for $c\equiv c(n,\nu,\Omega,c_{1},c_{2})$. Another application of \eqref{2.0} on the right-hand side of \eqref{2.14} with $\eta\in C^{1}_{c}(B_{\rr}(x_{0}))$ such that $\mathds{1}_{B_{(\tau_{1}+\tau_{2})/2}(x_{0})}\le \eta\le \mathds{1}_{B_{\tau_{2}}(x_{0})}$ and $\snr{D\eta}\lesssim (\tau_{2}-\tau_{1})^{-1}$ yields \eqref{2.1} - with $\beta:=\beta_{1}+1$ - and the proof is complete.
 \end{proof}
 \section{A priori estimates}\label{apr}
 In this section we deal with suitable regularized versions of \eqref{fun} to derive $L^{\infty}$ and $W^{2,2}$-estimates for smooth solutions to the associated Euler-Lagrange system. More precisely, let 
\begin{flalign}\label{om.1}
\Omega\subset \mathbb{R}^{n}, \ \ n\ge 3, \ \ \mbox{is an open, bounded, convex domain with $C^{2}$-regular boundary}
\end{flalign}
and $F\colon \mathbb{R}^{N\times n}\to \mathbb{R}$ be an integrand satisfying
\begin{flalign}\label{3.3}
F(z)=\ti{F}(\ell_{\ti{\mu}}(z))\qquad \mbox{for some} \ \ \ti{F}\in C^{\infty}_{\loc}[0,\infty), \ \ \ti{\mu}\in (0,1].
\end{flalign}
Moreover, the following growth/ellipticity conditions are in force:
\begin{flalign}\label{3.0}
\begin{cases}
\ L^{-1}\ell_{\mu}(z)^{p}-L\mu^{p}\le F(z)\le L\ell_{\mu}(z)^{p}+L \ell_{\mu}(z)^{q}\\
\ L^{-1}\ell_{\mu}(z)^{p-2}\snr{\xi}^{2}\le \langle \partial^{2}F(z)\xi,\xi\rangle\\
\ \snr{\partial^{2}F(z)}\le L\ell_{\mu}(z)^{p-2}+L \ell_{\mu}(z)^{q-2}
\end{cases}
\end{flalign}
for all $z\in \mathbb{R}^{N\times n}$, some absolute constant $L\ge 1$ and $\mu\equiv \mu(\ti{\mu})\in (0,4]$. As direct consequences of the assumptions listed above, we find that $z\mapsto F(z)$ is strictly convex. Furthermore, setting $\ti{a}(t):=\ti{F}'(t)/t$, $a(z):=\ti{a}(\ell_{\ti{\mu}}(z))z$, as in \cite[Sections 4.1 and 12]{demi1}, we obtain
\begin{flalign}\label{3.2}
\partial a(z)=\ti{a}(\ell_{\ti{\mu}}(z))\mathds{I}_{N\times n}+\ti{a}'(\ell_{\ti{\mu}}(z))\ell_{\ti{\mu}}(z)\frac{z\otimes z}{\ell_{\ti{\mu}}(z)^{2}},
\end{flalign}
where we denoted $\mathds{I}_{N\times n}:=(\delta_{ij}\delta^{\alpha\beta})(e^{\alpha}\otimes e_{i})\otimes (e^{\beta}\otimes e_{j})$, $i,j\in \{1,\cdots,n\}$, $\alpha,\beta\in \{1,\cdots,N\}$ - here $\delta$ is the usual Kronecker's symbol - and
\begin{flalign}\label{f.2}
 \left\{
\begin{array}{c}
\displaystyle 
 \ L^{-1}\ell_{\mu}(z)^{p-2}\le\ti{a}(\ell_{\ti{\mu}}(z))+\frac{\ti{a}'(\ell_{\ti{\mu}}(z))\snr{z}^{2}}{\ell_{\ti{\mu}}(z)}\le L\ell_{\mu}(z)^{p-2}+L \ell_{\mu}(z)^{q-2}\\[12pt]\displaystyle
 \ L^{-1}\ell_{\mu}(z)^{p-2}\le \ti{a}(\ell_{\ti{\mu}}(z))\le L\ell_{\mu}(z)^{p-2}+L \ell_{\mu}(z)^{q-2}\\[12pt]\displaystyle
 \ \frac{\snr{\ti{a}'(\ell_{\ti{\mu}}(z))}\snr{z}^{2}}{\ell_{\ti{\mu}}(z)}\le L\ell_{\mu}(z)^{p-2}+L \ell_{\mu}(z)^{q-2}
 \end{array}
\right.
\end{flalign}
for all $z\in \mathbb{R}^{N\times n}$. We finally take a function $f\in C^{\infty}_{c}(\Omega,\mathbb{R}^{N})$, a ball $B_{\rr}(x_{0})\subset \mathbb{R}^{n}$ with $\rr\in (0,\rr_{*}]$ and satisfying $\Gamma_{\Omega}(x_{0};\rr/2)\le \Upsilon_{\Omega}$, for some positive, absolute constant $\Upsilon_{\Omega}\equiv \Upsilon_{\Omega}(n,\Omega)$ and consider a classical solution $v\in C^{3}(\bar{\Omega}\cap B_{\rr}(x_{0}),\mathbb{R}^{N})$ of system
\begin{flalign}\label{3.1}
\begin{cases}
\ -\diver \ a(Dv)=f\qquad &\mbox{in} \ \ \Omega\cap B_{\rr}(x_{0})\\
\ v=0\qquad &\mbox{on} \ \ \partial \Omega\cap B_{\rr}(x_{0}).
\end{cases}
\end{flalign}
\subsection{A Caccioppoli type inequality on level sets}\label{dgk}
Recalling that $\mu>0$, from $\eqref{f.2}_{2}$ we deduce that $\ti{a}(\ell_{\ti{\mu}}(z))>0$ for all $z\in \mathbb{R}^{N\times n}$, therefore the matrix
\begin{flalign*}
\gamma\equiv \left\{\gamma_{ij}\right\}_{i,j=1}^{n}:=\left\{\delta_{ij}+\frac{\ti{a}'(\ell_{\ti{\mu}}(z))\ell_{\ti{\mu}}(z)}{\ti{a}(\ell_{\ti{\mu}}(z))}\sum_{\alpha=1}^{N}\frac{z^{\alpha}_{i}z^{\alpha}_{j}}{\ell_{\ti{\mu}}(z)^{2}}\right\}_{i,j=1}^{n}
\end{flalign*}
is well definite for all $z\in \mathbb{R}^{N\times n}$, symmetric and positive:
\begin{eqnarray}\label{3.4}
\gamma\langle\lambda_{i},\lambda_{j}\rangle&=&\snr{\lambda}^{2}+\frac{\ti{a}'(\ell_{\ti{\mu}}(z))\ell_{\ti{\mu}}(z)}{\ti{a}(\ell_{\ti{\mu}}(z))}\sum_{i,j=1}^{n}\sum_{\alpha=1}^{N}\frac{z^{\alpha}_{i}\lambda_{i}z^{\alpha}_{j}\lambda_{j}}{\ell_{\ti{\mu}}(z)^{2}}\nonumber \\
&=&\snr{\lambda}^{2}+\frac{\ti{a}'(\ell_{\ti{\mu}}(z))\ell_{\ti{\mu}}(z)}{\ti{a}(\ell_{\ti{\mu}}(z))}\sum_{\alpha=1}^{N}\frac{\snr{\langle z^{\alpha},\lambda\rangle}^{2}}{\ell_{\ti{\mu}}(z)^{2}}\nonumber \\
&=&\frac{1}{\ti{a}(\ell_{\ti{\mu}}(z))}\left(\ti{a}(\ell_{\ti{\mu}}(z))\snr{\lambda}^{2}+\frac{\ti{a}'(\ell_{\ti{\mu}}(z))\snr{z}^{2}}{\ell_{\ti{\mu}}(z)}\sum_{\alpha=1}^{N}\frac{\snr{\langle z^{\alpha},\lambda\rangle}^{2}}{\snr{z}^{2}}\right)\nonumber \\
&\ge&\frac{\snr{\lambda}^{2}}{\ti{a}(\ell_{\ti{\mu}}(z))}\min\left\{\ti{a}(\ell_{\ti{\mu}}(z)),\ti{a}(\ell_{\ti{\mu}}(z))+\frac{\ti{a}'(\ell_{\ti{\mu}}(z))\snr{z}^{2}}{\ell_{\ti{\mu}}(z)}\right\}\stackrel{\eqref{f.2}_{1}}{\ge} \frac{\snr{\lambda}^{2}\ell_{\mu}(z)^{p-2}}{L\ti{a}(\ell_{\ti{\mu}}(z))}\ge 0,
\end{eqnarray}
for all $\lambda\in \mathbb{R}^{n}$, where the last term in \eqref{3.4} vanishes iff $\lambda=0$
and we can set
\begin{flalign}\label{3.5}
\gamma^{v}\equiv \left\{\gamma_{ij}^{v}\right\}_{i,j=1}^{n}:=\left\{\delta_{ij}+\frac{\ti{a}'(\ell_{\ti{\mu}}(Dv))\ell_{\ti{\mu}}(Dv)}{\ti{a}(\ell_{\ti{\mu}}(Dv))}\sum_{\alpha=1}^{N}\frac{Dv^{\alpha}_{i}Dv^{\alpha}_{j}}{\ell_{\ti{\mu}}(Dv)^{2}}\right\}_{i,j=1}^{n}.
\end{flalign}
We then fix a vector $e\in \mathbb{S}^{n-1}$, differentiate $\eqref{3.1}_{1}$ in the direction of $e$ to get
\begin{flalign}\label{3.6}
-D_{e}f^{\alpha}=\sum_{i,k=1}^{n}\sum_{\beta=1}^{N}\partial_{x_{i}}\left[\left(\ti{a}(\ell_{\ti{\mu}}(Dv))\delta_{ik}\delta^{\alpha\beta}+\ti{a}'(\ell_{\ti{\mu}}(Dv))\ell_{\ti{\mu}}(Dv)\frac{D_{i}v^{\alpha}D_{k}v^{\beta}}{\ell_{\ti{\mu}}(Dv)^{2}}\right)D_{e}D_{k}v^{\beta}\right]
\end{flalign}
and consider the second order elliptic operator defined as $$C^{2}(\bar{\Omega}\cap B_{\rr}(x_{0}))\ni w \mapsto \mathcal{L}(w):=\sum_{i,j=1}^{n}\partial_{x_{i}}\left(\gamma_{ij}^{v}D_{j}w\right),$$
where $\{\gamma_{ij}^{v}\}_{i,j=1}^{n}$ has been defined in \eqref{3.5}. Keeping in mind the definition of $\ti{a}(\cdot)$, we compute
\begin{eqnarray*}
\mathcal{L}\left(\ti{F}(\ell_{\ti{\mu}}(Dv))\right)&=&\Delta\left(\ti{F}(\ell_{\ti{\mu}}(Dv))\right)\nonumber \\
&&+\sum_{i,j,k=1}^{n}\sum_{\alpha,\beta=1}^{N}\partial_{x_{i}}\left(\ti{a}'(\ell_{\ti{\mu}}(Dv))\ell_{\ti{\mu}}(Dv)\frac{D_{k}v^{\beta}D_{j}D_{k}v^{\beta}D_{i}v^{\alpha}D_{j}v^{\alpha}}{\ell_{\ti{\mu}}(Dv)^{2}}\right)\nonumber \\
&=:&\Delta\left(\ti{F}(\ell_{\ti{\mu}}(Dv))\right)+\mathcal{I}.
\end{eqnarray*}
Expanding the expression of term $\mathcal{I}$ we obtain
\begin{eqnarray*}
\mathcal{I}&=&\sum_{i,j,k=1}^{n}\sum_{\alpha,\beta=1}^{N}\ti{a}'(\ell_{\ti{\mu}}(Dv))\ell_{\ti{\mu}}(Dv)\frac{D_{k}v^{\beta}D_{j}D_{k}v^{\beta}D_{i}v^{\alpha}D_{i}D_{j}v^{\alpha}}{\ell_{\ti{\mu}}(Dv)^{2}}\nonumber \\
&&+\sum_{j=1}^{n}\sum_{\alpha=1}^{N}D_{j}v^{\alpha}\sum_{i,k=1}^{n}\sum_{\beta=1}^{N}\partial_{x_{i}}\left(\ti{a}'(\ell_{\ti{\mu}}(Dv))\ell_{\ti{\mu}}(Dv)\frac{D_{k}v^{\beta}D_{i}v^{\alpha}D_{j}D_{k}v^{\beta}}{\ell_{\ti{\mu}}(Dv)^{2}}\right)\nonumber \\
&\stackrel{\eqref{3.6}}{=}&\sum_{i,j,k=1}^{n}\sum_{\alpha,\beta=1}^{N}\ti{a}'(\ell_{\ti{\mu}}(Dv))\ell_{\ti{\mu}}(Dv)\frac{D_{k}v^{\beta}D_{j}D_{k}v^{\beta}D_{i}v^{\alpha}D_{i}D_{j}v^{\alpha}}{\ell_{\ti{\mu}}(Dv)^{2}}\nonumber \\
&&-\sum_{j=1}^{n}\sum_{\alpha=1}^{N}D_{j}v^{\alpha}D_{j}f^{\alpha}-\sum_{i,j=1}^{n}\sum_{\alpha=1}^{N}D_{j}v^{\alpha}\partial_{x_{i}}\left(\ti{a}(\ell_{\ti{\mu}}(Dv))D_{i}D_{j}v^{\alpha}\right)\nonumber \\
&=&\sum_{i,j,k=1}^{n}\sum_{\alpha,\beta=1}^{N}\ti{a}'(\ell_{\ti{\mu}}(Dv))\ell_{\ti{\mu}}(Dv)\frac{D_{k}v^{\beta}D_{j}D_{k}v^{\beta}D_{i}v^{\alpha}D_{i}D_{j}v^{\alpha}}{\ell_{\ti{\mu}}(Dv)^{2}}\nonumber \\
&&-\sum_{j=1}^{n}\sum_{\alpha=1}^{N}D_{j}v^{\alpha}D_{j}f^{\alpha}-\sum_{i,j=1}^{n}\sum_{\alpha=1}^{N}\partial_{x_{i}}\left(\ti{a}(\ell_{\ti{\mu}}(Dv))D_{i}D_{j}v^{\alpha}D_{j}v^{\alpha}\right)\nonumber \\
&&+\sum_{i,j=1}^{n}\sum_{\alpha=1}^{N}\ti{a}(\ell_{\ti{\mu}}(Dv))D_{i}D_{j}v^{\alpha}D_{i}D_{j}v^{\alpha}\nonumber \\
&\stackrel{\eqref{3.2}}{=}&\langle\partial a(Dv)D^{2}v,D^{2}v\rangle-\langle Dv,Df\rangle-\Delta\left(\ti{F}(\ell_{\ti{\mu}}(Dv))\right).
\end{eqnarray*}
Merging the content of the two above displays we obtain
\begin{flalign}\label{3.7}
 \mathcal{L}\left(\ti{F}(\ell_{\ti{\mu}}(Dv))\right)=\langle\partial a(Dv)D^{2}v,D^{2}v\rangle-\langle Dv,Df\rangle.
\end{flalign}
Now, let $H\in \textnormal{Lip}_{\loc}[0,\infty)$ be a nonnegative, nondecreasing function such that 
\eqn{3.7.4}
$$H(t)>0\Longrightarrow \ t>0 \ \Longrightarrow \ H'(t)>0,$$ $0\le \eta(\cdot)\in C^{1}_{c}(B_{\rr}(x_{0}))$ be any map and observe that by coarea formula it is $\snr{D\snr{Dv}}=0$ whenever $\snr{Dv}=0$. Moreover, the classical chain rule for Lipschitz functions applies to the composition $H(\snr{Dv})$ on $\Omega\cap B_{\rr}(x_{0})$, so we multiply the identity in \eqref{3.7} by $\eta^{2}H(\snr{Dv})$ to get on the left-hand side
\begin{eqnarray*}
\eta^{2}H(\snr{Dv})\mathcal{L}\left(\ti{F}(\ell_{\ti{\mu}}(Dv))\right)&=&H(\snr{Dv})\sum_{i,j=1}^{n}\partial_{x_{i}}\left(\eta^{2}\gamma_{ij}^{v}\partial_{x_{j}}\left(\ti{F}(\ell_{\ti{\mu}}(Dv))\right)\right)\nonumber \\
&&-2\eta H(\snr{Dv})\sum_{i,j=1}^{n}\gamma_{ij}^{v}D_{i}\eta \partial_{x_{j}}\left(\ti{F}(\ell_{\ti{\mu}}(Dv))\right)\nonumber \\
&=:&\mathcal{J}-2\eta H(\snr{Dv})\ti{a}(\ell_{\ti{\mu}}(Dv))\snr{Dv}\sum_{i,j=1}^{n} \gamma^{v}_{ij}D_{i}\eta D_{j}\snr{Dv}.
\end{eqnarray*}
Let us have a look to term $\mathcal{J}$. We have:
\begin{eqnarray*}
\mathcal{J}&=&\sum_{i,j=1}^{n}\partial_{x_{i}}\left(\eta^{2}H(\snr{Dv})\gamma_{ij}^{v}\partial_{x_{j}}\left(\ti{F}(\ell_{\ti{\mu}}(Dv))\right)\right)\nonumber \\
&&-\sum_{i,j=1}^{n}\eta^{2}H'(\snr{Dv})\snr{Dv}\ti{a}(\ell_{\ti{\mu}}(Dv))\gamma_{ij}^{v}D_{i}\snr{Dv}D_{j}\snr{Dv}=:\mathcal{J}_{1}+\mathcal{J}_{2}.
\end{eqnarray*}
Plugging the content of the last two displays in \eqref{3.7} and rearranging terms, we obtain
\begin{eqnarray}\label{3.7.1}
\eta^{2}H(\snr{Dv})\langle \partial a(Dv)D^{2}v,D^{2}v\rangle-\mathcal{J}_{2}&=&\eta^{2}H(\snr{Dv})\langle Dv,Df\rangle+\mathcal{J}_{1}\nonumber \\
&&-2\eta H(\snr{Dv})\ti{a}(\ell_{\ti{\mu}}(Dv))\snr{Dv}\sum_{i,j=1}^{n} \gamma^{v}_{ij}D_{i}\eta D_{j}\snr{Dv}.
\end{eqnarray}
We then integrate the content of the above display over $\Omega\cap B_{\rr}(x_{0})$ to get
\begin{flalign}\label{3.9.1}
\int_{\Omega\cap B_{\rr}(x_{0})}&\eta^{2}H(\snr{Dv})\langle\partial a(Dv)D^{2}v,D^{2}v\rangle \dx-\int_{\Omega\cap B_{\rr}(x_{0})}\mathcal{J}_{2}\dx\nonumber \\
=&\int_{\Omega\cap B_{\rr}(x_{0})}\eta^{2}H(\snr{Dv})\langle Dv,Df\rangle \dx+\int_{\Omega\cap B_{\rr}(x_{0})}\mathcal{J}_{1} \dx\nonumber \\
&-2\sum_{i,j=1}^{n}\int_{\Omega\cap B_{\rr}(x_{0})}\eta H(\snr{Dv})\ti{a}(\ell_{\ti{\mu}}(Dv))\snr{Dv}\gamma_{ij}^{v}D_{i}\eta D_{j}\snr{Dv} \dx\nonumber \\
=:&\mbox{(I)}+\mbox{(II)}+\mbox{(III)}.
\end{flalign}
Recalling that $\supp(f)\Subset \Omega$ and $\supp(\eta)\Subset B_{\rr}(x_{0})$, we integrate by parts to get:
\begin{eqnarray*}
\mbox{(I)}&=&-\int_{\Omega\cap B_{\rr}(x_{0})} \diver(\eta^{2}H(\snr{Dv})Dv)f \dx\nonumber \\
&\le&\sigma\int_{\Omega\cap B_{\rr}(x_{0})}\eta^{2}\snr{DH(\snr{Dv})}^{2} \ \dx+\sigma\int_{\Omega\cap B_{\rr}(x_{0})}\eta^{2}H(\snr{Dv})\ell_{\mu}(Dv)^{p-2}\snr{D^{2}v}^{2} \ \dx\nonumber \\
&&+\frac{2}{\sigma}\int_{\Omega\cap B_{\rr}(x_{0})\cap\{H(\snr{Dv})>0\}}\eta^{2}\snr{f}^{2}\left(\snr{Dv}^{2}+\frac{H(\snr{Dv})}{\ell_{\mu}(Dv)^{p-2}}\right)\dx\nonumber \\
&&+4\int_{\Omega\cap B_{\rr}(x_{0})}\snr{D\eta}^{2}H(\snr{Dv})^{2} \dx,
\end{eqnarray*}
with $\sigma\in (0,1)$ to be fixed suitably small in a few lines. Concerning term $\mbox{(II)}$, we recall that by $\eqref{3.1}_{2}$ the tangential components of $Dv^{\alpha}$ vanish on $\Omega\cap B_{\rr}(x_{0})$, so we can apply \eqref{3.8} with $w=Dv^{\alpha}$ to deduce the validity of
\begin{flalign}\label{3.8.1}
 \Delta v^{\alpha}\partial_{\mf{v}_{\Omega}}v^{\alpha}-\sum_{i,k=1}^{n}D_{i}D_{k}v^{\alpha}D_{k}v^{\alpha}(\mf{v}_{\Omega})_{i}=-\texttt{tr}(\mathbf{S}_{\Omega})(\partial_{\mf{v}_{\Omega}}v^{\alpha})^{2},\qquad \qquad \texttt{tr}(\mathbf{S}_{\Omega})\le 0,
\end{flalign}
for all $\alpha\in \{1,\cdots,N\}$ and then estimate:
\begin{eqnarray*}
\mbox{(II)}&=&\sum_{i,j=1}^{n}\int_{\partial \Omega\cap B_{\rr}(x_{0})}\eta^{2}H(\snr{Dv})\gamma^{v}_{ij}(\mf{v}_{\Omega})_{i}\partial_{x_{j}}\left(\ti{F}(\ell_{\ti{\mu}}(Dv))\right) \d\mathcal{H}^{n-1}\nonumber \\
&=&\sum_{j,k=1}^{n}\sum_{\alpha,\beta=1}^{N}\int_{\partial \Omega\cap B_{\rr}(x_{0})}\eta^{2}H(\snr{Dv})\ti{a}'(\ell_{\ti{\mu}}(Dv))\ell_{\ti{\mu}}(Dv)\frac{D_{j}v^{\alpha}D_{k}v^{\beta}}{\ell_{\ti{\mu}}(Dv)^{2}}D_{j}D_{k}v^{\beta}\partial_{\mf{v}_{\Omega}}v^{\alpha}\d\mathcal{H}^{n-1}\nonumber \\
&&+\sum_{\alpha=1}^{N}\int_{\partial \Omega\cap B_{\rr}(x_{0})}\eta^{2}H(\snr{Dv})\ti{a}(\ell_{\ti{\mu}}(Dv))\left(\sum_{i,k=1}^{n}D_{k}v^{\alpha}D_{i}D_{k}v^{\alpha}(\mf{v}_{\Omega})_{i}\right)\d\mathcal{H}^{n-1}\nonumber \\
&\stackrel{\eqref{3.8.1}_{1}}{=}&\sum_{\alpha=1}^{N}\int_{\partial \Omega\cap B_{\rr}(x_{0})}\eta^{2}H(\snr{Dv})\partial_{\mf{v}_{\Omega}}v^{\alpha}\left(\ti{a}'(\ell_{\ti{\mu}}(Dv))\ell_{\ti{\mu}}(Dv)\sum_{\beta=1}^{N}\sum_{j,k=1}^{n}\frac{D_{j}v^{\alpha}D_{k}v^{\beta}}{\ell_{\ti{\mu}}(Dv)^{2}}D_{j}D_{k}v^{\beta}\right)\d\mathcal{H}^{n-1}\nonumber \\
&&+\sum_{\alpha=1}^{N}\int_{\partial\Omega\cap B_{\rr}(x_{0})}\eta^{2}H(\snr{Dv})\ti{a}(\ell_{\ti{\mu}}(Dv))\left(\Delta v^{\alpha}\partial_{\mf{v}_{\Omega}}v^{\alpha}+\texttt{tr}(\mathbf{S}_{\Omega})(\partial_{\mf{v}_{\Omega}}v^{\alpha})^{2}\right)\d\mathcal{H}^{n-1}\nonumber \\
&\stackrel{\eqref{3.1}_{1}}{=}&-\sum_{\alpha=1}^{N}\int_{\partial\Omega\cap B_{\rr}(x_{0})}\eta^{2}H(\snr{Dv})\partial_{\mf{v}_{\Omega}}v^{\alpha}\left(\ti{a}(\ell_{\ti{\mu}}(Dv))\sum_{\beta=1}^{N}\sum_{j,k=1}^{n}\delta_{jk}\delta^{\alpha\beta}D_{j}D_{k}v^{\beta}+f^{\alpha}\right)\d\mathcal{H}^{n-1}\nonumber \\
&&+\sum_{\alpha=1}^{N}\int_{\partial\Omega\cap B_{\rr}(x_{0})}\eta^{2}H(\snr{Dv})\ti{a}(\ell_{\ti{\mu}}(Dv))\left(\sum_{\beta=1}^{N}\sum_{j,k=1}^{n}\delta_{jk}\delta^{\alpha\beta} D_{j}D_{k}v^{\beta}\partial_{\mf{v}_{\Omega}}v^{\alpha}\right)\d\mathcal{H}^{n-1}\nonumber \\
&&+\sum_{\alpha=1}^{N}\int_{\partial\Omega\cap B_{\rr}(x_{0})}\eta^{2}H(\snr{Dv})\ti{a}(\ell_{\ti{\mu}}(Dv))\texttt{tr}(\mathbf{S}_{\Omega})(\partial_{\mf{v}_{\Omega}}v^{\alpha})^{2}\d\mathcal{H}^{n-1}\nonumber \\
&\stackrel{\eqref{3.8.1}_{2}}{\le}&-\sum_{\alpha=1}^{N}\int_{\partial \Omega\cap B_{\rr}(x_{0})}\eta^{2}H(\snr{Dv})\partial_{\mf{v}_{\Omega}}v^{\alpha}f^{\alpha}\d\mathcal{H}^{n-1}=0,
\end{eqnarray*}
given that $\supp(f)\Subset \Omega$. We finally take care of term $\mbox{(III)}$. Recalling \eqref{3.7.4}, we use Cauchy Schwarz and Young's inequalities to estimate
\begin{eqnarray*}
\mbox{(III)}&=&-2\sum_{i,j=1}^{n}\int_{\Omega\cap B_{\rr}(x_{0})\cap \{H(\snr{Dv})>0\}}\eta H(\snr{Dv})\ti{a}(\ell_{\ti{\mu}}(Dv))\snr{Dv}\gamma_{ij}^{v}D_{i}\eta D_{j}\snr{Dv} \dx\nonumber \\
&\stackrel{\eqref{3.4},\eqref{3.7.4}}{\le}&2\left(\sum_{i,j=1}^{n}\int_{\Omega\cap B_{\rr}(x_{0})}\eta^{2}H'(\snr{Dv})\snr{Dv}\ti{a}(\ell_{\ti{\mu}}(Dv))\gamma_{ij}^{v}D_{i}\snr{Dv}D_{j}\snr{Dv}\dx\right)^{1/2}\nonumber \\
&&\cdot\left(\sum_{i,j=1}^{n}\int_{\Omega\cap B_{\rr}(x_{0}))\cap \{H(\snr{Dv})>0\}}\frac{H(\snr{Dv})^{2}}{H'(\snr{Dv})}\ti{a}(\ell_{\ti{\mu}}(Dv))\snr{Dv}\gamma_{ij}^{v}D_{i}\eta D_{j}\eta\dx\right)^{1/2}\nonumber \\
&\le&-\frac{1}{4}\mathcal{J}_{2}+4\int_{\Omega\cap B_{\rr}(x_{0}))\cap \{H(\snr{Dv})>0\}}\frac{H(\snr{Dv})^{2}\snr{Dv}\ti{a}(\ell_{\ti{\mu}}(Dv))\snr{D\eta}^{2}}{H'(\snr{Dv})}\dx\nonumber \\
&&+4\int_{\Omega\cap B_{\rr}(x_{0}))\cap \{H(\snr{Dv})>0\}}\frac{H(\snr{Dv})^{2}\snr{Dv}}{H'(\snr{Dv})}\left(\frac{\snr{\ti{a}'(\ell_{\ti{\mu}}(Dv))}\snr{Dv}^{2}}{\ell_{\ti{\mu}}(Dv)}\right)\sum_{\alpha=1}^{N}\frac{\langle Dv^{\alpha},D\eta\rangle^{2}}{\snr{Dv}^{2}}\dx\nonumber \\
&\stackrel{\eqref{f.2}_{2,3}}{\le}&-\frac{1}{4}\mathcal{J}_{2}+8L\int_{\Omega\cap B_{\rr}(x_{0}))\cap \{H(\snr{Dv})>0\}}\frac{H(\snr{Dv})^{2}\snr{Dv}\snr{D\eta}^{2}}{H'(\snr{Dv})}\left(\ell_{\mu}(Dv)^{p-2}+\ell_{\mu}(Dv)^{q-2}\right)\dx.
\end{eqnarray*}
Combining the estimates for terms $\mbox{(I)}$-$\mbox{(III)}$ we obtain
\begin{flalign}\label{3.9}
\int_{\Omega\cap B_{\rr}(x_{0})}&\eta^{2}H(\snr{Dv})\langle\partial a(Dv)D^{2}v,D^{2}v\rangle \dx\nonumber \\
&+\frac{3}{4}\int_{\Omega\cap B_{\rr}(x_{0})}\eta^{2}H'(\snr{Dv})\snr{Dv}\ti{a}(\ell_{\ti{\mu}}(Dv))\gamma_{ij}^{v}D_{i}\snr{Dv}D_{j}\snr{Dv}\dx\nonumber \\
\le &\sigma\int_{\Omega\cap B_{\rr}(x_{0})}\eta^{2}\snr{DH(\snr{Dv})}^{2} \ \dx+\sigma\int_{\Omega\cap B_{\rr}(x_{0})}\eta^{2}H(\snr{Dv})\ell_{\mu}(Dv)^{p-2}\snr{D^{2}v}^{2} \ \dx\nonumber \\
&+\frac{4}{\sigma}\int_{\Omega\cap B_{\rr}(x_{0})\cap\{H(\snr{Dv})>0\}}\eta^{2}\snr{f}^{2}\left(\snr{Dv}^{2}+\frac{H(\snr{Dv})}{\ell_{\mu}(Dv)^{p-2}}\right)+\snr{D\eta}^{2}H(\snr{Dv})^{2}\dx\nonumber \\
&+8L\int_{\Omega\cap B_{\rr}(x_{0}))\cap \{H(\snr{Dv})>0\}}\frac{H(\snr{Dv})^{2}\snr{Dv}\snr{D\eta}^{2}}{H'(\snr{Dv})}\left(\ell_{\mu}(Dv)^{p-2}+\ell_{\mu}(Dv)^{q-2}\right)\dx
\end{flalign}
for $\sigma\in (0,1)$ still to be fixed. To conclude, we only need to make a proper choice of the test function $H(\cdot)$. With $\kk\ge 0$ being any number and $t\ge 0$, we define
$$
H(t)\equiv H_{\kk}(t):=\left(h(t)-\kk\right)_{+},\qquad \qquad h(t):=\int_{0}^{t}\ell_{\mu}(s)^{p-2}s\ds.
$$
In particular, we have
\begin{flalign}\label{3.10.1}
 \left\{
\begin{array}{c}
\displaystyle 
 \ H_{\kk}(\snr{z})>0 \ \Longrightarrow \ \snr{z}>0,\qquad \qquad H_{\kk}(\snr{z})>0 \ \Longleftrightarrow \ h(\snr{z})>\kk  \\[10pt]\displaystyle
 \ \frac{H_{\kk}(\snr{z})}{\ell_{\mu}(z)^{p-2}}\le \snr{z}^{2},\qquad \qquad  H_{\kk}(\snr{z})>0 \ \Longrightarrow \ H'_{\kk}(\snr{z})>0,
 \end{array}
\right.
\end{flalign}
for all $z\in \mathbb{R}^{N\times n}$, where we used that $s\mapsto \ell_{\mu}(s)^{p-2}s$ is increasing. Moreover, by the chain rule we have
\begin{flalign}\label{3.10}
DH_{\kk}(\snr{Dv})=H_{\kk}'(\snr{Dv})D\snr{Dv}=\mathds{1}_{\{h(\snr{Dv})>\kk\}}\ell_{\mu}(Dv)^{p-2}\snr{Dv}D\snr{Dv}.
\end{flalign}
We then jump back to \eqref{3.9} with the above choice of $H(\cdot)$, apply $\eqref{3.0}_{2}$, \eqref{3.4}, $\eqref{3.10.1}$ and \eqref{3.10} to get
\begin{flalign*}
\frac{1}{L}\int_{\Omega\cap B_{\rr}(x_{0})}&\eta^{2}H_{\kk}(\snr{Dv})\ell_{\mu}(Dv)^{p-2}\snr{D^{2}v}^{2}  \dx+\frac{3}{4L}\int_{\Omega\cap B_{\rr}(x_{0})}\eta^{2}\snr{DH_{\kk}(\snr{Dv})}^{2}\dx\nonumber \\
\le &\int_{\Omega\cap B_{\rr}(x_{0})}\eta^{2}H_{\kk}(\snr{Dv})\langle\partial a(Dv)D^{2}v,D^{2}v\rangle \dx\nonumber \\
&+\frac{3}{4}\int_{\Omega\cap B_{\rr}(x_{0})}\eta^{2}H_{\kk}'(\snr{Dv})\snr{Dv}\ti{a}(\ell_{\ti{\mu}}(Dv))\gamma_{ij}^{v}D_{i}\snr{Dv}D_{j}\snr{Dv}\dx\nonumber \\
\le &\sigma\int_{\Omega\cap B_{\rr}(x_{0})}\eta^{2}\snr{DH_{\kk}(\snr{Dv})}^{2} \ \dx+\sigma\int_{\Omega\cap B_{\rr}(x_{0})}\eta^{2}H_{\kk}(\snr{Dv})\ell_{\mu}(Dv)^{p-2}\snr{D^{2}v}^{2} \ \dx\nonumber \\
&+\frac{8}{\sigma}\int_{\Omega\cap B_{\rr}(x_{0})\cap\{H_{\kk}(\snr{Dv})>0\}}\eta^{2}\snr{f}^{2}\snr{Dv}^{2}\dx+\frac{4}{\sigma}\int_{\Omega\cap B_{\rr}(x_{0})}\snr{D\eta}^{2}H_{\kk}(\snr{Dv})^{2} \dx\nonumber \\
&+8L\int_{\Omega\cap B_{\rr}(x_{0}))}H_{\kk}(\snr{Dv})^{2}\snr{D\eta}^{2}\left(1+\ell_{\mu}(Dv)^{q-p}\right)\dx.
\end{flalign*}
In the previous display, we fix $\sigma:=(2L)^{-1}$, use \eqref{3.10.1} and reabsorbe terms to conclude with
\begin{flalign}\label{3.11}
 \int_{\Omega\cap B_{\rr}(x_{0})}\eta^{2}\snr{D(h(\snr{Dv})-\kk)_{+}}^{2}\dx\le&c\int_{\Omega\cap B_{\rr}(x_{0})}(h(\snr{Dv})-\kk)_{+}^{2}\snr{D\eta}^{2}\left(1+\ell_{\mu}(Dv)^{q-p}\right)\dx\nonumber \\
 &+c\int_{\Omega\cap B_{\rr}(x_{0})}\mathds{1}_{ \{h(\snr{Dv})>\kk\}}\eta^{2}\snr{f}^{2}\snr{Dv}^{2} \ \dx,
\end{flalign}
with $c\equiv c(L)$.
\subsection{Basic Caccioppoli type inequalities}
To derive rather elementary Caccioppoli type inequalities to control second derivatives, we distinguish the two cases $p\ge 2$ and $1<p<2$, in each of them we shall make a proper choice of test function $H(\cdot)$.
\subsubsection*{Case $1<p\le 2$}
Observe that
\begin{flalign*}
&\left\{
\begin{array}{c}
\displaystyle 
\ \sum_{j,k=1}^{n}\sum_{\beta=1}^{N}\delta_{ij}D_{j}D_{k}v^{\beta}D_{k}v^{\beta}=\sum_{j,k=1}^{n}\sum_{\alpha,\beta=1}^{N}\delta_{ik}\delta^{\alpha\beta}D_{k}D_{j}v^{\beta}D_{j}v^{\alpha} \\[16pt]\displaystyle
\ \sum_{j,k=1}^{n}\sum_{\alpha,\beta=1}^{N}D_{i}v^{\alpha}D_{j}v^{\alpha}D_{j}D_{k}v^{\beta}D_{k}v^{\beta}=\sum_{j,k=1}^{n}\sum_{\alpha,\beta=1}^{N}D_{i}v^{\alpha}D_{k}v^{\beta}D_{j}D_{k}v^{\beta}D_{j}v^{\alpha}
\end{array}
\right.\vspace{16pt}\nonumber \\
&\qquad \qquad \quad \Longrightarrow \ \sum_{j=1}^{n}\gamma_{ij}^{v}\partial_{x_{j}}\left(\ti{F}(\ell_{\ti{\mu}}(Dv))\right)=\sum_{j,k=1}^{n}\sum_{\alpha,\beta=1}^{N}\partial_{z^{\alpha}_{i}}a^{\beta}_{k}(Dv)D_{j}D_{k}v^{\beta}D_{j}v^{\alpha},
\end{flalign*}
therefore choosing $H(\cdot)\equiv 1$, multiplying \eqref{3.7} by $\eta^{2}$ and recalling \eqref{3.7.1} (with $H(\cdot)\equiv 1$) we obtain
\begin{eqnarray*}
\eta^{2}\langle\partial a(Dv)D^{2}v,D^{2}v\rangle&=&\eta^{2}\langle Dv,Df\rangle+\mathcal{I}-2\eta\sum_{i,j,k=1}^{n}\sum_{\alpha,\beta=1}^{N}\partial_{z^{\alpha}_{i}}a^{\beta}_{k}(Dv)D_{j}D_{k}v^{\beta}D_{j}v^{\alpha}D_{i}\eta.
\end{eqnarray*}
We then integrate the identity in the above display on $\Omega\cap B_{\rr}(x_{0})$ to get
\begin{flalign}\label{x.1}
\int_{\Omega\cap B_{\rr}(x_{0})}&\eta^{2}\langle\partial a(Dv)D^{2}v,D^{2}v\rangle\dx=-\int_{\partial \Omega\cap B_{\rr}(x_{0})}\diver(\eta^{2}Dv)f\dx+\int_{\Omega\cap B_{\rr}(x_{0})}\mathcal{I}\dx\nonumber \\
&-2\sum_{i,j,k=1}^{n}\sum_{\alpha,\beta=1}^{N}\int_{\Omega\cap B_{\rr}(x_{0})}\partial_{z^{\alpha}_{i}}a^{\beta}_{k}(Dv)D_{j}D_{k}v^{\beta}D_{j}v^{\alpha}D_{i}\eta\dx\nonumber \\
\le &\sigma\int_{\Omega\cap B_{\rr}(x_{0})}\eta^{2}\ell_{\mu}(Dv)^{p-2}\snr{D^{2}v}^{2}\dx+c\int_{\Omega\cap B_{\rr}(x_{0})}\left(\ell_{\mu}(Dv)^{p}+\ell_{\mu}(Dv)^{q}\right)\snr{D\eta}^{2}\dx
\nonumber \\
&+\frac{1}{4}\int_{\Omega\cap B_{\rr}(x_{0})}\eta^{2}\langle\partial a(Dv)D^{2}v,D^{2}v\rangle\dx+\frac{c}{\sigma}\int_{\Omega\cap B_{\rr}(x_{0})}\eta^{2}\ell_{\mu}(Dv)^{2-p}\snr{f}^{2}\dx,
\end{flalign}
where $c\equiv c(n,N,L)$ and we used estimates analogous to those employed for bounding $\mbox{(I)}$-$\mbox{(II)}$ (with $H(\cdot)\equiv 1$) above to deal with the first two integrals in \eqref{x.1}, while, concerning the third one, we applied Cauchy-Schwarz and Young's inequalities and $\eqref{3.0}_{3}$. Finally, combining $\eqref{dvp}_{1}$, \eqref{3.0}$_{2}$ and choosing $\sigma>0$ sufficiently small in \eqref{x.1} to reabsorbe terms we conclude with
\begin{eqnarray}\label{3.12}
\int_{\Omega\cap B_{\rr}(x_{0})}\eta^{2}\snr{DV_{\mu}(Dv)}^{2}\dx
&\le& c\int_{\Omega\cap B_{\rr}(x_{0})}\eta^{2}\ell_{\mu}(Dv)^{2-p}\snr{f}^{2}\dx\nonumber \nonumber\\
&&+c\int_{\Omega\cap B_{\rr}(x_{0})}\snr{D\eta}^{2}\ell_{\mu}(Dv)^{p}\left(1+\ell_{\mu}(Dv)^{q-p}\right)\dx,
\end{eqnarray}
with $c\equiv c(n,N,L,p)$.
\subsubsection*{Case $p> 2$} Now we let $H(t):=(\mu^{2}+t^{2})^{\frac{p-2}{2}}$ in \eqref{3.9.1} and observe that it is smooth, positive and nondecreasing - here it is $p> 2$ and $\mu>0$, cf. \eqref{3.3}. We notice that \eqref{3.7.4} is used only for bounding term $\mbox{(III)}$ in Section \ref{dgk}; in particular it is not required to derive \eqref{3.9.1}, so we slightly modify the estimate for term $\mbox{(I)}$ in Section \ref{dgk} as:
\begin{eqnarray*}
\mbox{(I)}&\le&\sigma\int_{\Omega\cap B_{\rr}(x_{0})}\eta^{2}\snr{Dv}^{2}\snr{D(\ell_{\mu}(Dv)^{p-2})}^{2} \ \dx+\sigma\int_{\Omega\cap B_{\rr}(x_{0})}\eta^{2}\ell_{\mu}(Dv)^{2(p-2)}\snr{D^{2}v}^{2} \ \dx\nonumber \\
&&+\frac{4}{\sigma}\int_{\Omega\cap B_{\rr}(x_{0})}\eta^{2}\snr{f}^{2}\dx+4\int_{\Omega\cap B_{\rr}(x_{0})}\snr{D\eta}^{2}\ell_{\mu}(Dv)^{2p-2} \dx,
\end{eqnarray*}
for some $\sigma\in (0,1)$ to be fixed in a few lines. Next, we notice that the precise form of $H(\cdot)\ge 0$ plays no role in the estimation of term $\mbox{(II)}$, so also now it is $\mbox{(II)}\le 0$. The bound for terms $\mbox{(III)}$ can be derived exactly as done in Section \ref{dgk} observing that $H'(t)>0$ when $t>0$ and $\snr{D\snr{Dv}}=0$ on $\left\{x\in \Omega\cap B_{\rr}(x_{0})\colon \snr{Dv(x)}=0\right\}$ by coarea formula. Plugging these variations in \eqref{3.9}, we obtain:
\begin{flalign*}
\int_{\Omega\cap B_{\rr}(x_{0})}&\eta^{2}\ell_{\mu}(Dv)^{p-2}\langle\partial a(Dv)D^{2}v,D^{2}v\rangle\dx\nonumber \\
&+\frac{3(p-2)}{4}\int_{\Omega\cap B_{\rr}(x_{0})}\eta^{2}\ell_{\mu}(Dv)^{p-4}\snr{Dv}^{2}\ti{a}(\ell_{\ti{\mu}}(Dv))\gamma^{v}_{ij}D_{i}\snr{Dv}D_{j}\snr{Dv}\dx\nonumber \\
\le &\sigma \int_{\Omega\cap B_{\rr}(x_{0})}\eta^{2}\ell_{\mu}(Dv)^{2}\snr{D(\ell_{\mu}(Dv)^{p-2})}^{2} \dx +\sigma \int_{\Omega\cap B_{\rr}(x_{0})}\eta^{2}\ell_{\mu}(Dv)^{2(p-2)}\snr{D^{2}v}^{2}\dx\nonumber \\
&+\frac{4}{\sigma}\int_{\Omega\cap B_{\rr}(x_{0})}\eta^{2}\snr{f}^{2}\dx+4\int_{\Omega\cap B_{\rr}(x_{0})}\snr{D\eta}^{2}\ell_{\mu}(Dv)^{2p-2}\dx\nonumber \\
&+\frac{8L}{p-2}\int_{\Omega\cap B_{\rr}(x_{0})}\snr{D\eta}^{2}\ell_{\mu}(Dv)^{2p-2}\left(1+\ell_{\mu}(Dv)^{q-p}\right)\dx.
\end{flalign*}
The two terms on the left-hand side of the previous inequality can be estimated from below via \eqref{3.0}$_{2}$ and \eqref{3.4}:
\begin{flalign*}
\int_{\Omega\cap B_{\rr}(x_{0})}&\eta^{2}\ell_{\mu}(Dv)^{p-2}\langle\partial a(Dv)D^{2}v,D^{2}v\rangle\dx\nonumber \\
&+\frac{3(p-2)}{4}\int_{\Omega\cap B_{\rr}(x_{0})}\eta^{2}\ell_{\mu}(Dv)^{p-4}\snr{Dv}^{2}\ti{a}(\ell_{\ti{\mu}}(Dv))\gamma^{v}_{ij}D_{i}\snr{Dv}D_{j}\snr{Dv}\dx\nonumber \\
\ge &\frac{1}{L}\int_{\Omega\cap B_{\rr}(x_{0})}\eta^{2}\ell_{\mu}(Dv)^{2(p-2)}\snr{D^{2}v}^{2}\dx\nonumber \\
&+\frac{3(p-2)}{4L}\int_{\Omega\cap B_{\rr}(x_{0})}\eta^{2}\ell_{\mu}(Dv)^{2p-6}\snr{Dv}^{2}\snr{D\snr{Dv}}^{2}\dx\nonumber \\
\ge &\frac{1}{L}\int_{\Omega\cap B_{\rr}(x_{0})}\eta^{2}\ell_{\mu}(Dv)^{2(p-2)}\snr{D^{2}v}^{2}\dx\nonumber \\
&+\frac{3}{4(p-2)L}\int_{\Omega\cap B_{\rr}(x_{0})}\eta^{2}\ell_{\mu}(Dv)^{2}\snr{D(\ell_{\mu}(Dv)^{p-2})}^{2}\dx.
\end{flalign*}
Merging the two previous displays, choosing $\sigma>0$ sufficiently small and applying \eqref{dvp}$_{2}$, we obtain
\begin{flalign}\label{3.13}
\int_{\Omega\cap B_{\rr}(x_{0})}\eta^{2}\snr{DW_{\mu}(Dv)}^{2}\dx\le c\int_{\Omega\cap B_{\rr}(x_{0})}\left(\eta^{2}\snr{f}^{2}+\snr{D\eta}^{2}\ell_{\mu}(Dv)^{2p-2}\left(1+\ell_{\mu}(Dv)^{q-p}\right)\right)\dx,
\end{flalign}
with $c\equiv c(n,N,L,p)$.
\subsection{$L^{\infty}$-bound}\label{linf} With $\rr\in (0,\rr_{*}]$ and parameters $\rr/2\le\tau_{1}<\tau_{2}\le \rr$, we define $M:=\nr{Dv}_{L^{\infty}(\Omega\cap B_{\tau_{2}}(x_{0}))}$, observe that \eqref{3.11} holds in particular for all nonnegative $\eta\in C^{1}_{c}(B_{\tau_{2}}(x_{0}))$ and rearrange it as 
\begin{eqnarray}\label{3.14}
 \int_{\Omega\cap B_{\rr}(x_{0})}\eta^{2}\snr{D(h(\snr{Dv})-\kk)_{+}}^{2}\dx &\le&cM^{q-p}\int_{\Omega\cap B_{\rr}(x_{0})}(h(\snr{Dv})-\kk)_{+}\snr{D\eta}^{2}\dx\nonumber \\
 &&+cM^{2}\int_{\Omega\cap B_{\rr}(x_{0})}\mathds{1}_{ \{h(\snr{Dv})>\kk\}}\eta^{2}\snr{f}^{2}\dx,
\end{eqnarray}
where we assumed with no loss of generality that $M\ge 1$ - the opposite inequality would result in an immediate proof of \eqref{lip}. Since \eqref{3.14} is verified for all nonnegative $\eta\in C^{1}_{c}(B_{\tau_{2}}(x_{0}))$ and $h(\snr{Dv})\in W^{1,2}(\Omega\cap B_{\rr}(x_{0}))\cap C(\bar{\Omega}\cap B_{\rr}(x_{0}))$ - recall that $v\in C^{3}(\bar{\Omega}\cap B_{\rr}(x_{0}),\mathbb{R}^{N})$ - we can apply Lemma \ref{i++} with $\kk_{0}=0$, $M_{1}:=M^{\frac{q-p}{2}}$ and $M_{2}:=M$ to get 
\begin{flalign}\label{3.15}
 \nr{h(\snr{Dv})}_{L^{\infty}(\Omega\cap B_{\tau_{1}}(x_{0}))}\le& \frac{cM^{\frac{q-p}{2}\left(1+\max\left\{\nu,\frac{n-3}{2}\right\}\right)}}{\rr^{\alpha}(\tau_{2}-\tau_{1})^{\beta}}\left(\int_{\Omega\cap B_{\tau_{2}}(x_{0})}h(\snr{Dv})^{2} \dx\right)^{\frac{1}{2}}\nonumber \\
 &+\frac{cM^{\frac{q-p}{2}\max\left\{\nu,\frac{n-3}{2}\right\}+1}[f]_{n,1;B_{\rr}(x_{0})}}{\rr^{\alpha}(\tau_{2}-\tau_{1})^{\beta}},
 \end{flalign}
for $c\equiv c(n,\Omega,\nu,L,p)$. Recalling that $h(t)=p^{-1}(\mu^{2}+t^{2})^{p/2}-p^{-1}\mu^{p}$, we have that $$\frac{2^{\frac{p}{2}}t^{p}}{p}+\frac{2^{\frac{p}{2}}\mu^{p}}{p}\ge h(t)\ge \frac{t^{p}}{p}-\frac{\mu^{p}}{p},$$
therefore we can rewrite \eqref{3.15} as
\begin{flalign}\label{3.16}
 \nr{Dv}_{L^{\infty}(\Omega\cap B_{\tau_{1}}(x_{0}))}\le&\frac{c\nr{Dv}_{L^{\infty}(\Omega\cap B_{\tau_{2}}(x_{0}))}^{\frac{1}{2}+\frac{q-p}{2p}\left(1+\max\left\{\nu,\frac{n-3}{2}\right\}\right)}}{\rr^{\alpha/p}(\tau_{2}-\tau_{1})^{\beta/p}}\left(\int_{\Omega\cap B_{\tau_{2}}(x_{0})}h(\snr{Dv})\dx\right)^{\frac{1}{2p}}\nonumber \\
 &+\frac{c\nr{Dv}_{L^{\infty}(\Omega\cap B_{\tau_{2}}(x_{0}))}^{\frac{1}{p}+\frac{q-p}{2p}\max\left\{\nu,\frac{n-3}{2}\right\}}[f]_{n,1;\Omega\cap B_{\rr}(x_{0})}^{\frac{1}{p}}}{\rr^{\alpha/p}(\tau_{2}-\tau_{1})^{\beta/p}}+c,
\end{flalign}
with $c\equiv c(n,\Omega,\nu,L,p)$. Now notice that the bound on exponents $(p,q)$ in \eqref{pq} guarantees that
$$
\frac{1}{2}+\frac{q-p}{2p}\left(1+\max\left\{\nu,\frac{n-3}{2}\right\}\right)<1\qquad \mbox{and}\qquad \frac{1}{p}+\frac{q-p}{2p}\max\left\{\nu,\frac{n-3}{2}\right\}<1,
$$
provided we choose $\nu\equiv \nu(p,q)\in (0,1/2)$ sufficiently small when $n=3$, therefore we apply Young inequality with conjugate exponents
$$
(a_{1},a_{2}):=\left(\frac{2p}{q+m_{\nu}(q-p)},\frac{2p}{p-(1+m_{\nu})(q-p)}\right),\quad (a_{3},a_{4}):=\left(\frac{2p}{2+m_{\nu}(q-p)},\frac{2p}{2(p-1)-m_{\nu}(q-p)}\right)
$$
where it is $m_{\nu}:=\max\left\{\nu,\frac{n-3}{2}\right\}$, to conclude with
\begin{eqnarray*}
 \nr{Dv}_{L^{\infty}(\Omega\cap B_{\tau_{1}}(x_{0}))}&\le&\frac{1}{4} \nr{Dv}_{L^{\infty}(\Omega\cap B_{\tau_{2}}(x_{0}))}+\frac{c}{\rr^{\alpha a_{2}/p}(\tau_{2}-\tau_{1})^{\beta a_{2}/p}}\left(\int_{\Omega\cap B_{\rr}(x_{0})}h(\snr{Dv})\dx\right)^{\frac{a_{2}}{2p}}\nonumber \\
 &&+\frac{c[f]_{n,1;\Omega\cap B_{\rr}(x_{0})}^{\frac{a_{4}}{p}}}{\rr^{\alpha a_{4}/p}(\tau_{2}-\tau_{1})^{\beta a_{4}/p}}+c,
\end{eqnarray*}
for $c\equiv c(n,\Omega,\nu,L,p,q)$. Finally, via Lemma \ref{l5} we obtain
\begin{flalign}\label{lip}
\nr{Dv}_{L^{\infty}(\Omega\cap B_{\rr/2}(x_{0}))}\le c\rr^{-b_{1}}\nr{Dv}_{L^{p}(\Omega\cap B_{\rr}(x_{0}))}^{d_{1}}+c\rr^{-b_{2}}[f]_{n,1;\Omega\cap B_{\rr}(x_{0})}^{d_{2}}+c,
\end{flalign}
with $c\equiv c(n,\Omega,L,p,q)$, $b_{1},b_{2}\equiv b_{1},b_{2}(n,p,q)$ and $d_{1},d_{2}\equiv b_{1},b_{2}(n,p,q)$ - since the size of $\nu$ is ultimately influenced only by $(p,q)$, we incorporated any dependency on $\nu$ into a dependency on $(p,q)$.
\begin{remark}\label{r4.1}
\emph{Notice that \eqref{lip} can be derived on balls $B_{\sigma}(\bar{x})$, $\bar{x}\in \bar{\Omega}\cap B_{\rr}(x_{0})$, $\sigma\in (0,\rr-\snr{x_{0}-\bar{x}})$ - recall \eqref{2.2} - in which case $\Omega\cap B_{\sigma}(\bar{x})\subset \Omega\cap B_{\rr}(x_{0})$, i.e.:
\begin{flalign}\label{lip.2}
 \nr{Dv}_{L^{\infty}(\Omega\cap B_{\sigma/2}(\bar{x}))}\le c\sigma^{-b_{1}}\nr{Dv}_{L^{p}(\Omega\cap B_{\sigma}(\bar{x}))}^{d_{1}}+c\sigma^{-b_{2}}[f]_{n,1;\Omega\cap B_{\sigma}(\bar{x})}^{d_{2}}+c,
\end{flalign}
with $c\equiv c(n,\Omega,L,p,q)$, $b_{1},b_{2}\equiv b_{1},b_{2}(n,p,q)$ and $d_{1},d_{2}\equiv d_{1},d_{2}(n,p,q)$. In fact, if $\dist(\bar{x},\partial \Omega)\ge\sigma$, then $B_{3\sigma/4}(\bar{x})\Subset \Omega$, so a Lipschitz estimate analogous to \eqref{lip} follows from \cite[Theorem 1]{BS1}\footnote{In \cite[Theorem 1]{BS1} the result is given in the scalar case $N=1$, but adopting the very same strategy presented there and the approximation procedure of \cite[Section 5]{bemi}, Lipschitz bounds follow verbatim.}. On the other hand, if $\dist(\bar{x},\partial \Omega)< \sigma$, it is $\partial \Omega\cap B_{\sigma}(\bar{x})\subset \partial \Omega\cap B_{\rr}(x_{0})$ so system \eqref{3.1} holds on $\Omega\cap B_{\sigma}(\bar{x})$ and \eqref{lip.2} on $\Omega\cap B_{\sigma}(\bar{x})$ is recovered exactly as done for \eqref{lip} in Sections \ref{dgk} and \ref{linf}.}
\end{remark}

 To conclude, let us consider concentric balls $B_{\sigma_{2}}(x_{0})\subset B_{\sigma_{1}}(x_{0})\subseteq B_{\rr}(x_{0})$. We let $\mathcal{K}\subset \mathbb{N}$ be a discrete set of indices, set $r:=(\sigma_{1}-\sigma_{2})/4$, $\mf{Z}:=\mathds{1}_{\Omega}Dv$ and, keeping \eqref{2.2} in mind, we cover $B_{\sigma_{2}}(x_{0})$ with a collection $\{B_{r/2}(x_{i})\}_{i\in \mathcal{K}}$ made by $\snr{\mathcal{K}}=c(n)(\sigma_{1}-\sigma_{2})^{-n}$ balls, with $\{x_{i}\}_{i\in \mathcal{K}}\subset B_{(3\sigma_{2}+\sigma_{1})/4}(x_{0})$ - we stress that this covering can be chosen in such a way that the finite intersection property is satisfied, in the sense that each doubled ball $B_{2r}(x_{i})$ intersects at most $8^{n}$ of other doubled balls from the same family. Recalling Remarks \ref{rf} and \ref{r4.1}, for arbitrary $t>1$ we have
\begin{eqnarray*}
\left(\mint_{\Omega\cap B_{\sigma_{2}}(x_{0})}\snr{Dv}^{t}\dx\right)^{1/t}&=&\snr{\Omega\cap B_{\sigma_{2}(x_{0})}}^{-1/t}\nr{\mf{Z}}_{L^{t}(B_{\sigma_{2}}(x_{0}))}\stackrel{\eqref{2.2}}{\le} \left(\frac{\Upsilon_{\Omega}}{\sigma_{2}^{n}}\right)^{1/t}\sum_{i\in\mathcal{K}}\nr{\mf{Z}}_{L^{t}(B_{r/2}(x_{i}))}\nonumber \\
&=&\left(\frac{\Upsilon_{\Omega}}{\sigma_{2}^{n}}\right)^{1/t}\sum_{i\in\mathcal{K}}\nr{Dv}_{L^{t}(\Omega\cap B_{r/2}(x_{i}))}\nonumber \\
&\le& \left(\frac{\Upsilon_{\Omega}}{\sigma_{2}^{n}}\right)^{1/t}\sum_{i\in \mathcal{K}}\snr{\Omega\cap B_{r/2}(x_{i})}^{1/t}\nr{Dv}_{L^{\infty}(\Omega\cap B_{r/2}(x_{i}))}\nonumber \\
&\stackrel{\eqref{2.2}}{\le}&\left(\frac{r\omega_{n}^{1/n}\Upsilon_{\Omega}^{1/n}}{2\sigma_{2}}\right)^{n/t}\sum_{i\in \mathcal{K}}\nr{Dv}_{L^{\infty}(\Omega\cap B_{r/2}(x_{i}))}\nonumber \\
&\stackrel{\eqref{lip.2}}{\le}&\frac{c}{(\sigma_{1}-\sigma_{2})^{b_{1}}}\left(\frac{r\omega_{n}^{1/n}\Upsilon_{\Omega}^{1/n}}{2\sigma_{2}}\right)^{n/t}\sum_{i\in \mathcal{K}}\left(\nr{Dv}_{L^{p}(\Omega\cap B_{r}(x_{i}))}^{d_{1}}+1\right)\nonumber \\
&&+\frac{c}{(\sigma_{1}-\sigma_{2})^{b_{2}}}\left(\frac{r\omega_{n}^{1/n}\Upsilon_{\Omega}^{1/n}}{2\sigma_{2}}\right)^{n/t}\sum_{i\in \mathcal{K}}[f]_{n,1;\Omega\cap B_{r}(x_{i})}^{d_{2}}\nonumber \\
&=&\frac{c}{(\sigma_{1}-\sigma_{2})^{b_{1}}}\left(\frac{r\omega_{n}^{1/n}\Upsilon_{\Omega}^{1/n}}{2\sigma_{2}}\right)^{n/t}\sum_{i\in \mathcal{K}}\left(\nr{\mf{Z}}_{L^{p}( B_{r}(x_{i}))}^{d_{1}}+1\right)\nonumber \\
&&+\frac{c}{(\sigma_{1}-\sigma_{2})^{b_{2}}}\left(\frac{r\omega_{n}^{1/n}\Upsilon_{\Omega}^{1/n}}{2\sigma_{2}}\right)^{n/t}\sum_{i\in \mathcal{K}}[f]_{n,1; B_{r}(x_{i})}^{d_{2}}\nonumber \\
&\le&\frac{c}{(\sigma_{1}-\sigma_{2})^{b_{1}+n}}\max\left\{1,\left(\frac{r\omega_{n}^{1/n}\Upsilon_{\Omega}^{1/n}}{2\sigma_{2}}\right)\right\}^{n/t}\left(\nr{Dv}_{L^{p}(\Omega\cap B_{\sigma_{1}}(x_{0}))}^{d_{1}}+1\right)\nonumber \\
&&+\frac{c}{(\sigma_{1}-\sigma_{2})^{b_{2}}}\max\left\{1,\left(\frac{r\omega_{n}^{1/n}\Upsilon_{\Omega}^{1/n}}{2\sigma_{2}}\right)\right\}^{n/t}[f]_{n,1;\Omega\cap B_{\sigma_{1}}(x_{0})}^{d_{2}},
\end{eqnarray*}
with $c\equiv c(n,\Omega,L,p,q)$, $b_{1},b_{2}\equiv b_{1},b_{2}(n,p,q)$, $d_{1},d_{2}\equiv d_{1},d_{2}(n,p,q)$. At this stage, we observe that none of the bounding parameters appearing above depends in an increasing fashion on $t$, therefore we can send $t\to \infty$ in the previous display to get \eqref{lip.1} below. What we proved so far can be summarized in the following proposition.
\begin{proposition}\label{p4.1}
Under assumptions \eqref{pq} and \eqref{om.1}-\eqref{3.0}, let $x_{0}\in \bar{\Omega}$ be any point and $v\in C^{3}(\bar{\Omega}\cap B_{\rr}(x_{0}),\mathbb{R}^{N})$, $0<\rr\le \rr_{*}$, be a classical solution to system \eqref{3.1}. Then, whenever $B_{\sigma_{2}}(x_{0})\subset B_{\sigma_{1}}(x_{0})\subseteq B_{\rr}(x_{0})$ are concentric balls, 
\begin{flalign}\label{lip.1}
\nr{Dv}_{L^{\infty}(\Omega\cap B_{\sigma_{2}}(x_{0}))}\le\frac{c}{(\sigma_{1}-\sigma_{2})^{a_{1}}}\left(\nr{Dv}_{L^{p}(\Omega\cap B_{\sigma_{1}}(x_{0}))}^{d_{1}}+1\right)+\frac{c}{(\sigma_{1}-\sigma_{2})^{a_{2}}}[f]_{n,1;\Omega\cap B_{\sigma_{1}}(x_{0})}^{d_{2}},
\end{flalign}
holds true with $a_{1},a_{2}\equiv a_{1},a_{2}(n,p,q)$ and $c\equiv c(n,\Omega,L,p,q)$.
\end{proposition}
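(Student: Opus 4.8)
The plan is to first establish an $L^{\infty}$-bound for $Dv$ on the half-ball $B_{\rr/2}(x_{0})$ by a global De Giorgi iteration, and then to globalize it to \eqref{lip.1} on arbitrary concentric balls by a covering argument. The starting point is a Caccioppoli-type inequality on the super-level sets of the nonlinear quantity $h(\snr{Dv})$, where $h(t):=\int_{0}^{t}\ell_{\mu}(s)^{p-2}s\,\d s$. To produce it I would differentiate $\eqref{3.1}_{1}$ along an arbitrary direction $e\in\mathbb{S}^{n-1}$, multiply the differentiated system by $\eta^{2}H_{\kk}(\snr{Dv})$ with $H_{\kk}(t):=(h(t)-\kk)_{+}$ and a cut-off $\eta$, and integrate over $\Omega\cap B_{\rr}(x_{0})$, arriving at an identity of the shape \eqref{3.9.1}. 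Its three contributions must be handled separately: the forcing term (I) via an integration by parts using $\supp f\Subset\Omega$; the boundary integral (II) on $\partial\Omega$, which is shown to be \emph{nonpositive} by combining the vanishing of the tangential components of $Dv^{\alpha}$ on $\partial\Omega$ (which is where the homogeneous boundary condition $v\equiv 0$ enters) with the convexity of $\Omega$, i.e.\ $\tr(\mathbf{S}_{\Omega})\le 0$ in \eqref{3.8}; and the Cauchy--Schwarz type term (III), which is partially reabsorbed into the coercive contribution $-\mathcal{J}_{2}$ on the left-hand side. Using the ellipticity $\eqref{3.0}_{2}$, the structure inequalities \eqref{f.2}, the positivity \eqref{3.4} of $\gamma^{v}$ and the elementary properties \eqref{3.10.1} of $H_{\kk}$, fixing $\sigma:=(2L)^{-1}$ and reabsorbing, one obtains the level-set Caccioppoli inequality \eqref{3.11}.

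Next I would run the iteration. With $M:=\nr{Dv}_{L^{\infty}(\Omega\cap B_{\tau_{2}}(x_{0}))}$ and, without loss of generality, $M\ge 1$, inequality \eqref{3.11} becomes \eqref{3.14}, which is exactly the nonhomogeneous Caccioppoli inequality \eqref{2.0} with $w=h(\snr{Dv})$, $M_{1}=M^{(q-p)/2}$ and $M_{2}=M$; since $v\in C^{3}(\bar{\Omega}\cap B_{\rr}(x_{0}),\mathbb{R}^{N})$, the map $h(\snr{Dv})$ belongs to $W^{1,2}(\Omega\cap B_{\rr}(x_{0}))\cap C(\bar{\Omega}\cap B_{\rr}(x_{0}))$, so Lemma \ref{i++} applies and gives \eqref{3.15}. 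Rewriting $h$ in terms of $\snr{Dv}^{p}$ produces \eqref{3.16}, an inequality for $\nr{Dv}_{L^{\infty}(\Omega\cap B_{\tau_{1}}(x_{0}))}$ in which $\nr{Dv}_{L^{\infty}(\Omega\cap B_{\tau_{2}}(x_{0}))}$ enters with exponents depending on $q-p$ and on $m_{\nu}:=\max\{\nu,(n-3)/2\}$. The decisive input is the bound \eqref{pq}: precisely because $q-p<\min\{2p/(n-1),4(p-1)/(n-3)\}$ --- after fixing $\nu\equiv\nu(p,q)$ small when $n=3$ --- these exponents are strictly below $1$, so a Young inequality with suitable conjugate exponents absorbs $\frac{1}{4}\nr{Dv}_{L^{\infty}(\Omega\cap B_{\tau_{2}}(x_{0}))}$, and the iteration Lemma \ref{l5} removes the free radii, yielding \eqref{lip} on $B_{\rr/2}(x_{0})$.

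Finally I would globalize. By Remark \ref{r4.1}, estimate \eqref{lip} holds with the same exponents on any ball $B_{\sigma}(\bar{x})$ with $\bar{x}\in\bar{\Omega}\cap B_{\rr}(x_{0})$ and $\sigma\in(0,\rr-\snr{x_{0}-\bar{x}})$ (so that $\Omega\cap B_{\sigma}(\bar{x})\subset\Omega\cap B_{\rr}(x_{0})$): when $\dist(\bar{x},\partial\Omega)\ge\sigma$ it follows from the interior Lipschitz bound of \cite{BS1} together with the approximation scheme of \cite{bemi}, and when $\dist(\bar{x},\partial\Omega)<\sigma$ it is recovered by repeating the steps above on $\Omega\cap B_{\sigma}(\bar{x})$, where \eqref{3.1} still holds; this is \eqref{lip.2}. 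Then, for concentric balls $B_{\sigma_{2}}(x_{0})\subset B_{\sigma_{1}}(x_{0})\subseteq B_{\rr}(x_{0})$, I set $r:=(\sigma_{1}-\sigma_{2})/4$, cover $B_{\sigma_{2}}(x_{0})$ by $c(n)(\sigma_{1}-\sigma_{2})^{-n}$ balls $B_{r/2}(x_{i})$ with $x_{i}\in B_{(3\sigma_{2}+\sigma_{1})/4}(x_{0})$ and finite overlap, bound $\left(\mint_{\Omega\cap B_{\sigma_{2}}(x_{0})}\snr{Dv}^{t}\dx\right)^{1/t}$ by the sum of the $\nr{Dv}_{L^{\infty}(\Omega\cap B_{r/2}(x_{i}))}$ with weights controlled via the density bound \eqref{2.2}, apply \eqref{lip.2} on each ball, and observe that no resulting constant grows with $t$; letting $t\to\infty$ gives \eqref{lip.1}. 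The genuinely delicate points are the nonpositivity of the boundary integral (II) --- the only true ``boundary'' ingredient, and exactly where convexity of $\Omega$ and $v\equiv 0$ on $\partial\Omega\cap B_{\rr}(x_{0})$ are used --- and the exponent bookkeeping enabling the reabsorption in the De Giorgi step, which is tuned precisely to the range \eqref{pq}; the limit $t\to\infty$ in the covering step is by contrast routine, needing only that the bounding constants be nonincreasing in $t$.
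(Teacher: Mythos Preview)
Your proposal is correct and follows essentially the same approach as the paper: derive the level-set Caccioppoli inequality \eqref{3.11} via the identity \eqref{3.7}--\eqref{3.9.1} (with the boundary term $\mbox{(II)}\le 0$ by convexity), feed it into Lemma~\ref{i++} with $M_{1}=M^{(q-p)/2}$, $M_{2}=M$, reabsorb via Young and Lemma~\ref{l5} using \eqref{pq} to get \eqref{lip}, and then pass to \eqref{lip.1} by the covering argument with $t\to\infty$. The only cosmetic difference is that the paper phrases the first step through the auxiliary operator $\mathcal{L}(\ti{F}(\ell_{\ti{\mu}}(Dv)))$ rather than directly ``multiplying the differentiated system'', but the computations and the resulting inequality are identical.
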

\subsection{Higher differentiability}\label{hd}
Here we combine estimates \eqref{3.12}-\eqref{3.13} with the $L^{\infty}$-bound in \eqref{lip.1} to obtain higher differentiability results for suitable nonlinear vector fields depending on the gradient of solutions to \eqref{3.1}. Let $0<r_{2}<r_{1}\le \rr$ be radii and, for simplicity, let us rename the right-hand side of \eqref{lip.1} with the choice $\sigma_{1}\equiv(r_{2}+r_{1})/2$ and $\sigma_{2}\equiv r_{2}$ as
$$
\mathcal{M}:=(r_{1}-r_{2})^{-a_{1}}\left(\nr{Dv}_{L^{p}(\Omega\cap B_{(r_{2}+r_{1})/2}(x_{0}))}^{d_{1}}+1\right)+(r_{1}-r_{2})^{-a_{2}}[f]_{n,1;\Omega\cap B_{(r_{1}+r_{2})/2}(x_{0})}^{d_{2}}
$$
\subsubsection*{Case $1<p\le 2$} We observe that \eqref{3.12} holds for all nonnegative $\eta\in C^{1}_{c}(B_{\rr}(x_{0}))$, therefore we can pick any of them such that $\mathds{1}_{B_{r_{2}}(x_{0})}\le \eta\le \mathds{1}_{B_{(r_{1}+r_{2})/2}(x_{0})}$ and $\snr{D\eta}\lesssim (r_{1}-r_{2})^{-1}$ to get
\begin{eqnarray}\label{d2-}
\int_{\Omega\cap B_{r_{2}}(x_{0})}\snr{DV_{\mu}(Dv)}^{2}\dx&\le&  c\left(1+\nr{Dv}_{L^{\infty}(\Omega\cap B_{(r_{1}+r_{2})/2}(x_{0}))}^{2-p}\right)\int_{\Omega\cap B_{(r_{1}+r_{2})/2}(x_{0})}\snr{f}^{2}\dx\nonumber \\
&&+\frac{c\left(1+\nr{Dv}_{L^{\infty}(\Omega\cap B_{(r_{1}+r_{2})/2}(x_{0}))}^{q-p}\right)}{(r_{1}-r_{2})^{2}}\int_{\Omega\cap B_{(r_{1}+r_{2})/2}(x_{0})}\ell_{\mu}(Dv)^{p}\dx\nonumber \\
&\stackrel{\eqref{lip.1}}{\le}&c\mathcal{M}^{2-p}\int_{\Omega\cap B_{r_{1}}(x_{0})}\snr{f}^{2}\dx+\frac{c\mathcal{M}^{q-p}}{(r_{1}-r_{2})^{2}}\int_{\Omega\cap B_{r_{1}}(x_{0})}\ell_{\mu}(Dv)^{p}\dx,
\end{eqnarray}
for $c\equiv c(n,N,\Omega,L,p,q)$. 

\subsubsection*{Case $p> 2$} Being \eqref{3.13} valid for all nonnegative $\eta\in C^{1}_{c}(B_{\rr}(x_{0}))$, as done before we select $\eta\in C^{1}_{c}(B_{r_{1}}(x_{0}))$ so that $\mathds{1}_{B_{r_{2}}(x_{0})}\le \eta\le \mathds{1}_{B_{(r_{1}+r_{2})/2}(x_{0})}$ and $\snr{D\eta}\lesssim (r_{1}-r_{2})^{-1}$ and plug it into \eqref{3.13} for obtaining
\begin{eqnarray}\label{d2+}
\int_{\Omega\cap B_{r_{2}}(x_{0})}\snr{DW_{\mu}(Dv)}^{2}\dx&\le& c\int_{\Omega\cap B_{(r_{1}+r_{2})/2}(x_{0})}\eta^{2}\snr{f}^{2}\dx\nonumber \\
&&+\frac{c}{(r_{1}-r_{2})^{2}}\left(1+\nr{Dv}_{L^{\infty}(\Omega\cap B_{(r_{1}+r_{2})/2}(x_{0}))}^{q-2}\right)\int_{\Omega\cap B_{r_{1}}(x_{0})}\ell_{\mu}(Dv)^{p}\dx\nonumber \\
&\stackrel{\eqref{lip.1}}{\le}&c\int_{\Omega\cap B_{r_{1}}(x_{0})}\snr{f}^{2}\dx+\frac{c\mathcal{M}^{q-2}}{(r_{1}-r_{2})^{2}}\int_{\Omega\cap B_{r_{1}}(x_{0})}\ell_{\mu}(Dv)^{p}\dx,
\end{eqnarray}
with $c\equiv c(n,N,\Omega,L,p,q)$. Recalling the explicit expression of $\mathcal{M}$, a straightforward manipulation of \eqref{d2-}-\eqref{d2+} yields \eqref{hd1}-\eqref{hd2}.
\begin{proposition}\label{p4.2}
Under the same assumptions of Proposition \ref{p4.1}, let $v\in C^{3}(\bar{\Omega}\cap B_{\rr}(x_{0}),\mathbb{R}^{N})$ be a classical solution to system \eqref{3.1} and $B_{r_{2}}(x_{0})\subset B_{r_{1}}(x_{0})\subseteq B_{\rr}(x_{0})$ be concentric balls.
\begin{itemize}
    \item If $1<p\le 2$, it is $V_{p}(Dv)\in W^{1,2}(\Omega\cap B_{r_{2}}(x_{0}),\mathbb{R}^{N\times n})$ and
 \begin{flalign}\label{hd1}
\nr{DV_{\mu}(Dv)}_{L^{2}(\Omega\cap B_{r_{2}}(x_{0}))}\le c(r_{1}-r_{2})^{-a_{1}}\nr{\ell_{\mu}(Dv)}_{L^{p}(\Omega\cap B_{r_{1}}(x_{0}))}^{d_{1}}+c(r_{1}-r_{2})^{-a_{2}}[f]_{n,1;\Omega\cap B_{r_{1}}(x_{0})}^{d_{2}}
\end{flalign}
is verified for $a_{1},a_{2},d_{1},d_{2}\equiv a_{1},a_{2},d_{1},d_{2}(n,p,q)$ and $c\equiv c(n,N,\Omega,L,p,q)$.
\item If $p>2$, it is $W_{p}(Dv)\in W^{1,2}(\Omega\cap B_{r_{2}}(x_{0}),\mathbb{R}^{N\times n})$ and
\begin{flalign}\label{hd2}
\nr{DW_{\mu}(Dv)}_{L^{2}(\Omega\cap B_{r_{2}}(x_{0}))}\le c(r_{1}-r_{2})^{-a_{1}}\nr{\ell_{\mu}(Dv)}_{L^{p}(\Omega\cap B_{r_{1}}(x_{0}))}^{d_{1}}+c(r_{1}-r_{2})^{-a_{2}}[f]_{n,1;\Omega\cap B_{r_{1}}(x_{0})}^{d_{2}}
\end{flalign}
holds with $a_{1},a_{2},d_{1},d_{2}\equiv a_{1},a_{2},d_{1},d_{2}(n,p,q)$ and $c\equiv c(n,N,\Omega,L,p,q)$.
\end{itemize}
\end{proposition}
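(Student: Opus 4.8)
The plan is to obtain \eqref{hd1}--\eqref{hd2} by feeding the $L^{\infty}$-bound of Proposition \ref{p4.1} into the second-order Caccioppoli-type inequalities \eqref{3.12} and \eqref{3.13} already established in this section, followed by a purely algebraic rearrangement. First I would fix, for $0<r_{2}<r_{1}\le\rr$, a cut-off $\eta\in C^{1}_{c}(B_{r_{1}}(x_{0}))$ with $\mathds{1}_{B_{r_{2}}(x_{0})}\le\eta\le\mathds{1}_{B_{(r_{1}+r_{2})/2}(x_{0})}$ and $\snr{D\eta}\lesssim (r_{1}-r_{2})^{-1}$, and insert it into \eqref{3.12} in the case $1<p\le 2$, respectively into \eqref{3.13} in the case $p>2$, both of which hold for every nonnegative test function supported in $B_{\rr}(x_{0})$. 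The only quantities appearing on the resulting right-hand sides that are not yet of the form required by \eqref{hd1}--\eqref{hd2} are the powers $\nr{Dv}_{L^{\infty}(\Omega\cap B_{(r_{1}+r_{2})/2}(x_{0}))}^{q-p}$, $\nr{Dv}_{L^{\infty}(\Omega\cap B_{(r_{1}+r_{2})/2}(x_{0}))}^{2-p}$ (resp. the power $q-2$); these I would control through \eqref{lip.1} applied with $\sigma_{2}=(r_{1}+r_{2})/2$, $\sigma_{1}=r_{1}$, which gives $\nr{Dv}_{L^{\infty}(\Omega\cap B_{(r_{1}+r_{2})/2}(x_{0}))}\le c\mathcal{M}$ with $\mathcal{M}$ as above, and produces exactly \eqref{d2-} and \eqref{d2+}.

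The second step is the bookkeeping. Taking square roots in \eqref{d2-}/\eqref{d2+}, substituting the explicit expression of $\mathcal{M}$, and bounding $\nr{f}_{L^{2}(\Omega\cap B_{r_{1}}(x_{0}))}$ by $c\,[f]_{n,1;\Omega\cap B_{r_{1}}(x_{0})}$ — via the embedding $L(n,1)(B)\hookrightarrow L^{2}(B)$ on bounded sets, equivalently through \eqref{2.0.1} and the monotonicity of $\ti{f}^{*}$, using $\snr{\Omega\cap B_{r_{1}}(x_{0})}\le c$ — one is left with products of powers of $\nr{\ell_{\mu}(Dv)}_{L^{p}(\Omega\cap B_{r_{1}}(x_{0}))}$ and of $[f]_{n,1;\Omega\cap B_{r_{1}}(x_{0})}$. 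Each such mixed term I would split by Young's inequality into a pure energy contribution and a pure datum contribution. Since the exponents $a_{1},a_{2},d_{1},d_{2}$ in \eqref{lip.1} depend only on $(n,p,q)$ and no further parameter enters the splitting, the exponents produced in \eqref{hd1}--\eqref{hd2} again depend only on $(n,p,q)$ (with possibly different values than in \eqref{lip.1}), while the constant picks up in addition $N$, $\Omega$ and $L$; passing from $\nr{Dv}_{L^{p}}$ to $\nr{\ell_{\mu}(Dv)}_{L^{p}}$ costs only an additive constant because $\mu\le 4$ by \eqref{3.0}.

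It then remains to read off the regularity conclusion. Because $v\in C^{3}(\bar\Omega\cap B_{\rr}(x_{0}),\RN)$ and $f\in C^{\infty}_{c}(\Omega,\RN)$, all the integrals above are finite, so $DV_{\mu}(Dv)\in L^{2}(\Omega\cap B_{r_{2}}(x_{0}),\ernN)$ when $1<p\le 2$ and $DW_{\mu}(Dv)\in L^{2}(\Omega\cap B_{r_{2}}(x_{0}),\ernN)$ when $p>2$; by the pointwise equivalences \eqref{dvp} — legitimate since $p>1$, respectively $p>3/2$, both satisfied here — these quantities are comparable to $\ell_{\mu}(Dv)^{(p-2)/2}\snr{D^{2}v}$ and $\ell_{\mu}(Dv)^{p-2}\snr{D^{2}v}$, whence $V_{\mu}(Dv),W_{\mu}(Dv)\in W^{1,2}(\Omega\cap B_{r_{2}}(x_{0}),\ernN)$; since $\mu>0$ in the present regularized setting, the first equivalence also yields $D^{2}v\in L^{2}$ in either case, consistently with the degenerate part of Theorem \ref{t2}.

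As for the main difficulty: at this stage there is essentially none left, the substantive work having been carried out already in deriving the level-set Caccioppoli inequality of Section \ref{dgk} — whose delicate ingredient was proving that the boundary integral (the term $\mbox{(II)}$ there) is nonpositive, which crucially exploited the convexity of $\Omega$ through \eqref{3.8} — and the Lipschitz bound \eqref{lip.1}. The one point that still demands care is the exponent tracking in the Young-inequality splitting, so as to land exactly on the stated sum form with $a_{1},a_{2},d_{1},d_{2}$ depending on $(n,p,q)$ only; the smallness of $q-p$ encoded in \eqref{pq} intervenes here merely via its earlier use in Proposition \ref{p4.1}, estimate \eqref{lip.1} being fed in as a black box.
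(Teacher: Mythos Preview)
Your proposal is correct and follows essentially the same approach as the paper: the paper's proof (Section \ref{hd}) is precisely to choose the cut-off $\eta$ with $\mathds{1}_{B_{r_{2}}}\le\eta\le\mathds{1}_{B_{(r_{1}+r_{2})/2}}$, insert it into \eqref{3.12} or \eqref{3.13}, bound the resulting $L^{\infty}$-powers of $Dv$ via \eqref{lip.1} on the intermediate ball to obtain \eqref{d2-}--\eqref{d2+}, and then dismiss the remaining rearrangement as ``a straightforward manipulation''. Your bookkeeping paragraph in fact supplies more detail on that last step (the $L(n,1)\hookrightarrow L^{2}$ control of $f$ and the Young splitting) than the paper itself provides.
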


\section{Approximation scheme}\label{as}
The ultimate goal of this section is the proof of Theorems \ref{t1}-\ref{t2}, which essentially consists in transferring the a priori estimates \eqref{lip.1} and \eqref{hd1}-\eqref{hd2} to minimizers of \eqref{fun}. A quick inspection of the arguments leading to Propositions \ref{p4.1}-\ref{p4.2} shows that they crucially rely on the possibility of working with classical solutions to system \eqref{3.1} on a $C^{2}$-regular, convex domain $\Omega$ and the smoothness of $\partial \Omega$ never enters in a quantitative form. This means that we need to construct highly regular functionals approximating \eqref{fun} governing suitable Dirichlet problems defined on smooth convex domains properly approximating $\Omega$.

\subsection{Approximation of the domain} \label{ad} Let $\Omega\subset \mathbb{R}^{n}$ be as in \eqref{om}. With $\varepsilon\in (0,1]$, we apply the approximation procedure described in Section \ref{cd} to derive a sequence of smooth, convex domains $\{\Omega_{\varepsilon}\}_{\varepsilon\in (0,1]}$ satisfying \eqref{3.8.2}-\eqref{3.8.2.1}. Since we are going to apply on $\Omega_{\varepsilon}$ the whole machinery built in Sections \ref{dgp}-\ref{apr}, the validity of $\eqref{3.8.2.1}$ yields that all the constants appearing in Lemma \ref{i++} and Propositions \ref{p4.1}-\ref{p4.2} are stable as $\varepsilon\to 0$ and ultimately depend only on $\Omega$.
\subsection{Approximating Dirichlet problems}\label{ri} 

With $F\colon \mathbb{R}^{N\times n}\to \mathbb{R}$ as in \eqref{f.0}-\eqref{f.1}, $f$ satisfying in \eqref{ff} and $\delta,\varepsilon\in (0,1]$, we extend (without relabelling) $\ti{F}(\cdot)$ by even reflection making it defined on the whole $\mathbb{R}$, i.e. $\ti{F}(t)=\ti{F}(-t)$ when $t<0$ - it is $\ti{F}'(0)=0$, - consider a symmetric mollifier $\psi\in C^{\infty}_{c}(-1,1)$ such that $\nr{\psi}_{L^{1}(\mathbb{R})}=1$, $(-3/4,3/4)\subset \supp(\psi)$, $\psi_{\delta}(t):=\delta^{-1}\psi(t/\delta)$, set $\mu_{\delta}:=\mu+\delta$ and define 
\begin{flalign*}
&\ti{F}_{\delta}(t):=\int_{-1}^{1}\ti{F}(t+s\delta)\psi(s)\ds,\qquad F_{\delta}(z):=\ti{F}_{\delta}(\ell_{\delta}(z)),\\
& \ti{a}_{\delta}(t):=t^{-1}\ti{F}_{\delta}'(t) \ \ \mbox{for} \ \ t\not =0,\qquad\ \ F_{\delta\varepsilon}(z):=F_{\delta}(z)+\texttt{d}_{\varepsilon}^{\frac{2n}{n-1}}\ell_{\mu_{\delta}}(z)^{q}.
\end{flalign*}
By construction $F_{\delta\varepsilon}\in C^{\infty}_{\loc}(\mathbb{R}^{N\times n})$ and, by \eqref{f.1} and \cite[Lemma 12.2]{demi1} we immediately have that
\begin{flalign}\label{5.1}
 \begin{cases}
 \ L^{-1}\ell_{\mu_{\delta}}(z)^{p}+\texttt{d}_{\varepsilon}^{\frac{2n}{n-1}} \ell_{\mu_{\delta}}(z)^{q}-L\mu_{\delta}^{p}\le F_{\delta\varepsilon}(z)\le L\ell_{\mu_{\delta}}(z)^{p}+L\ell_{\mu_{\delta}}(z)^{q}\\
 \ L^{-1}\ell_{\mu_{\delta}}(z)^{p-2}\snr{\xi}^{2}+L^{-1}\texttt{d}_{\varepsilon}^{\frac{2n}{n-1}} \ell_{\mu_{\delta}}(z)^{q-2}\snr{\xi}^{2}\le \langle \partial^{2}F_{\delta\varepsilon}(z)\xi,\xi\rangle\\
 \ \snr{\partial^{2}F_{\delta\varepsilon}(z)}\le L\ell_{\mu_{\delta}}(z)^{p-2}+L\ell_{\mu_{\delta}}(z)^{q-2},
 \end{cases}
\end{flalign}
for all $z,\xi\in \mathbb{R}^{N\times n}$, with $L\equiv L(n,N,\Lambda,p,q)\ge 1$ and
\begin{flalign}\label{5.2}
 F_{\delta}(z)\to_{\delta\to 0} F(z) \ \ \mbox{uniformly on compact subsets of }\mathbb{R}^{N\times n}.
\end{flalign}
In particular, setting $$\ti{a}_{\delta\varepsilon}(t):=\ti{a}_{\delta}(t)+q\texttt{d}_{\varepsilon}^{\frac{2n}{n-1}}(\mu_{\delta}^{2}+t^{2})^{\frac{q-2}{2}} \ \ \mbox{for }t\not=0\qquad \mbox{and}\qquad  a_{\delta\varepsilon}(z):=\ti{a}_{\delta\varepsilon}(\ell_{\delta}(z))z,$$
it holds that
\begin{flalign}\label{f.2.1}
 \left\{
\begin{array}{c}
\displaystyle 
 \ L^{-1}\ell_{\mu_{\delta}}(z)^{p-2}+L^{-1}\texttt{d}_{\varepsilon}^{\frac{2n}{n-1}}\ell_{\mu_{\delta}}(z)^{q-2}\le\ti{a}_{\delta\varepsilon}(\ell_{\delta}(z))+\frac{\ti{a}_{\delta\varepsilon}'(\ell_{\delta}(z))\snr{z}^{2}}{\ell_{\delta}(z)}\le L\ell_{\mu_{\delta}}(z)^{p-2}+L \ell_{\mu_{\delta}}(z)^{q-2}\\[12pt]\displaystyle
 \ L^{-1}\ell_{\mu_{\delta}}(z)^{p-2}+L^{-1}\texttt{d}_{\varepsilon}^{\frac{2n}{n-1}}\ell_{\mu_{\delta}}(z)^{q-2}\le \ti{a}_{\delta\varepsilon}(\ell_{\ti{\mu}}(z))\le L\ell_{\mu_{\delta}}(z)^{p-2}+L \ell_{\mu_{\delta}}(z)^{q-2}\\[12pt]\displaystyle
 \ \frac{\snr{\ti{a}_{\delta\varepsilon}'(\ell_{\delta}(z))}\snr{z}^{2}}{\ell_{\delta}(z)}\le L\ell_{\mu_{\delta}}(z)^{p-2}+L \ell_{\mu_{\delta}}(z)^{q-2},
 \end{array}
\right.
\end{flalign}
for all $z\in \mathbb{R}^{N\times n}$ and $L\equiv L(n,N,\Lambda,p,q)$, and, for any $M>0$ there is a constant $A_{M}\equiv A_{M}(n,N,\Lambda,p,q,M)>0$ such that
\begin{flalign}\label{f.2.1.1}
0< t\le M \ \Longrightarrow \ \delta\snr{\ti{a}_{\delta\varepsilon}''(t)}t^{2}\le A_{M}\texttt{d}_{\varepsilon}^{-\frac{2n}{n-1}}\ti{a}_{\delta\varepsilon}(t).
\end{flalign}
The bounds in $\eqref{f.2.1}$ can be derived as those in \eqref{f.2} with $\delta$ instead of $\ti{\mu}$ and $\mu_{\delta}$ replacing $\mu$, so we only need to quickly justify $\eqref{f.2.1.1}$. From $\eqref{5.1}_{3}$, \eqref{f.0} and $\eqref{f.1}_{3}$ we deduce that $\snr{\ti{F}'_{\delta}(t)}+\snr{F''_{\delta}(t)}(\mu_{\delta}^{2}+t^{2})^{1/2}\le L(\mu_{\delta}^{2}+t^{2})^{(p-1)/2}+L(\mu_{\delta}^{2}+t^{2})^{(q-1)/2}$, so recalling the very definition of $\ti{a}_{\delta\varepsilon}(\cdot)$, a quick computation shows that $\ti{a}_{\delta\varepsilon}''(t)=t^{-1}(\ti{F}''*\psi_{\delta}')(t)-2t^{-2}\ti{F}_{\delta}''(t)+2t^{-3}\ti{F}_{\delta}'(t)+q(q-2)\texttt{d}_{\varepsilon}^{\frac{2n}{n-1}}(\mu_{\delta}+t^{2})^{(q-4)/2}[1+(q-4)t^{2}/(\mu_{\delta}^{2}+t^{2})]$, therefore if $M>0$ is any constant and $0< t\le M$, \eqref{f.2.1.1} follows by combining the previous considerations with \eqref{f.2.1}$_{2}$. Next, let $u\in W^{1,p}(\Omega,\mathbb{R}^{N})$ be a minimizer of \eqref{fun}, $B_{2\rr}(x_{0})\subset \mathbb{R}^{n}$ be a ball centered in $x_{0}\in \partial \Omega$ with $0<\rr\le \rr_{*}$ and assume that $u=0$ on $\partial \Omega\cap B_{2\rr}(x_{0})$ in the sense of traces. Since we are only interested in the behavior of $u$ on the intersection $\Omega\cap B_{\rr}(x_{0})$, we can assume that $u\in W^{1,p}_{0}(\Omega,\mathbb{R}^{N})$ and extend it as $u\equiv 0$ in $\mathbb{R}^{n}\setminus \Omega$. We further introduce a nonnegative, radially symmetric mollifier of $\mathbb{R}^{n}$, $\phi\in C^{\infty}_{c}(B_{1}(0))$, $\nr{\phi}_{L^{1}(\mathbb{R}^{n})}=1$, $\phi_{\varepsilon}(x):=(\texttt{d}_{\varepsilon}/4)^{-n}\phi(4x/\texttt{d}_{\varepsilon})$ and let $\ti{u}_{\varepsilon}:=u*\phi_{\texttt{d}_{\varepsilon}/4}$, $f_{\varepsilon}:=f*\phi_{\texttt{d}_{\varepsilon}/4}$. Recalling Remark \ref{rf} and that $u=0$ in the sense of traces of $\partial \Omega\cap B_{2\rr}(x_{0})$, we can conclude that
\eqn{5.3}
$$\left.\ti{u}_{\varepsilon}\right|_{\partial \Omega_{\varepsilon}\cap B_{\rr}(x_{0})}=0,\qquad\qquad \qquad  f_{\varepsilon}\in C^{\infty}_{c}(\Omega_{\varepsilon},\mathbb{R}^{N}).$$
We then consider the approximating Dirichlet problems
\eqn{pde}
$$
\ti{u}_{\varepsilon}+W^{1,q}_{0}(\Omega_{\varepsilon}\cap B_{\rr}(x_{0}),\mathbb{R}^{N})\ni w\mapsto  \min_{\ti{u}_{\varepsilon}+W^{1,q}_{0}(\Omega_{\varepsilon}\cap B_{\rr}(x_{0}),\mathbb{R}^{N})}\mathcal{F}_{\delta\varepsilon}(w;\Omega_{\varepsilon}\cap B_{\rr}(x_{0})),
$$
where we set
$$
\mathcal{F}_{\delta\varepsilon}(w;\Omega_{\varepsilon}\cap B_{\rr}(x_{0})):=\int_{\Omega_{\varepsilon}\cap B_{\rr}(x_{0})}\left[F_{\delta\varepsilon}(Dw)-f_{\varepsilon}\cdot w\right]\dx,
$$
which is well defined for all $w\in W^{1,q}(\Omega_{\varepsilon}\cap B_{\rr}(x_{0}),\mathbb{R}^{N})$. Standard direct methods, convexity and minimality arguments yield that problem \eqref{pde} admits a unique solution $u_{\delta\varepsilon}\in \ti{u}_{\varepsilon}+W^{1,q}_{0}(\Omega_{\varepsilon}\cap B_{\rr}(x_{0}),\mathbb{R}^{N})$ solving the Euler-Lagrange system
\eqn{ela}
$$
\int_{\Omega_{\varepsilon}\cap B_{\rr}(x_{0})}\left[\langle a_{\delta\varepsilon}(Du_{\varepsilon}),D\varphi\rangle-f_{\varepsilon}\cdot\varphi\right]\dx=0\qquad \mbox{for all} \ \ \varphi\in W^{1,q}_{0}(\Omega_{\varepsilon}\cap B_{\rr}(x_{0}),\mathbb{R}^{N});
$$
so recalling \eqref{5.3}$_{1}$, $u_{\delta\varepsilon}$ is a weak solution to the nonhomogeneous system
\eqn{5.5}
$$
\begin{cases}
\ -\diver \ a_{\delta\varepsilon}(Du_{\delta\varepsilon})=f_{\varepsilon}\quad &\mbox{in} \ \ \Omega_{\varepsilon}\cap B_{\rr}(x_{0})\\
\ u_{\delta\varepsilon}=0\quad &\mbox{on} \ \ \partial \Omega_{\varepsilon}\cap B_{\rr}(x_{0}).
\end{cases}
$$ 
To assure the applicability of Propositions \ref{p4.1}-\ref{p4.2} we need to show that 
\eqn{5.7}
$$u_{\delta\varepsilon}\in C^{3}(\bar{\Omega}_{\varepsilon}\cap B_{\rr}(x_{0}),\mathbb{R}^{N}),$$ in other words, that $u_{\delta\varepsilon}$ is a classical, smooth solution to system \eqref{5.5}. 
\subsection{Flattening of the boundary, reflection and reduction to the interior}
The regularity information in \eqref{5.7} can be retrieved by first constructing a suitable diffeomorphism to flatten the boundary of $\Omega_{\varepsilon}$, and then, via reflection, reducing to a local problem for which a complete regularity theory is available. This and classical Schauder type arguments eventually lead to \eqref{5.7}. 
\subsubsection*{Flattening of the boundary} Owing to the structure conditions of the approximating integrands, the flattening of the boundary procedure of \cite[Section 5.1]{bdms} applies verbatim. In fact, there is $\sigma>0$ such that on the tubular neighborhood $U_{\sigma}:=\left\{x\in \mathbb{R}^{n}\colon \dist(x,\partial \Omega_{\varepsilon})<\sigma\right\}$ the nearest point retraction $\pi\colon U_{\sigma}\to \partial \Omega_{\varepsilon}$ exists and is smooth. Up to some rigid motions of $\mathbb{R}^{n}$, we can assume that around the origin, $\partial \Omega_{\varepsilon}$ is the graph of some $\phi\in C^{\infty}(B_{\tau}^{n-1}(0))$, where $B_{\tau}^{n-1}(0)$ denotes the open ball of $\mathbb{R}^{n-1}$ and produce a regular parametrization $\Phi\colon B_{\tau}^{n-1}(0)\times (-\sigma,\sigma)\to \mathbb{R}^{n}$ of the neighborhood $N_{0}:=\left\{x\in U_{\sigma}\colon \pi(x)\in \textnormal{graph}(\phi)\right\}$ of zero. The map $\Phi$ is invertible and its regular inverse $\Psi\colon N_{0}\to B_{\tau}^{n-1}(0)\times (-\sigma,\sigma)$ defines the local coordinates that we need. Precisely, for $z,z_{1},z_{2}\in \mathbb{R}^{N\times n}$, we set 
\begin{flalign*}
&Q(x):=\langle D\Psi(x)^{t}, D\Psi(x)\rangle, \qquad Q_{x}(z_{1},z_{2}):=\sum_{\alpha=1}^{N}\sum_{i,j=1}^{n}Q_{ij}(x)(z_{1})^{\alpha}_{i}(z_{2})^{\alpha}_{j},\qquad Q_{\Phi}:=Q\circ \Phi,\nonumber \\
&\qquad \qquad \qquad \quad  \snr{z}_{Q_{x}}:=\sqrt{Q(x)\langle z,z\rangle},\qquad\qquad  \hat{\ell}_{s}(z):=\sqrt{s^{2}+\snr{z}_{Q_\Phi}^{2}},
\end{flalign*}
and, with $u_{\delta\varepsilon}$ being the solution to \eqref{pde} and $\varphi\in W^{1,q}_{0}(\Omega_{\varepsilon}\cap B_{\rr}(x_{0}))$ being any function, define
\begin{flalign*}
&B_{\tau}^{n-1}(0)\times (-\sigma,\sigma)\ni y\mapsto \hat{u}(y):=u_{\delta\varepsilon}(\Phi(y)) \ \Longleftrightarrow \ u_{\delta\varepsilon}(x)=\hat{u}(\Psi(x))\nonumber \\
&B_{\tau}^{n-1}(0)\times (-\sigma,\sigma)\ni y\mapsto \hat{\varphi}(y):=\varphi(\Phi(y)) \ \Longleftrightarrow \ \varphi(x)=\hat{\varphi}(\Psi(x)).
\end{flalign*}
Notice that since $u_{\delta\varepsilon}\in W^{1,q}(\Omega_{\varepsilon}\cap B_{\rr}(x_{0}),\mathbb{R}^{N})$ verifies $\left.u_{\delta\varepsilon}\right|_{\partial \Omega_{\varepsilon}\cap B_{\rr}(x_{0})}=0$, then
$$
 \hat{u}\in W^{1,q}(B_{\tau}^{n-1}(0)\times (-\sigma,0),\mathbb{R}^{N})  \ \ \mbox{with}\ \  \hat{u}=0 \ \ \mbox{on} \ \ B_{\tau}^{n-1}(0)\times \{0\}.
$$
Proceeding exactly as in \cite[Section 5.1]{bdms} we observe that we can choose $\sigma$ sufficiently small - e.g. $0<\sigma<\texttt{d}_{\varepsilon}/8$ - so that $f_{\varepsilon}\equiv 0$ on $U_{\sigma}$ and system \eqref{ela} can be turned into
\eqn{5.9}
$$
\int_{B_{\tau}^{n-1}(0)\times (-\sigma,0)}\ti{a}_{\delta\varepsilon}(\hat{\ell}_{\delta}(D\hat{u}))Q_{\Phi}(D\hat{u},D\hat{\varphi})\mathcal{J}_{\Phi}\dy=0,
$$
holding for all $\hat{\varphi}\in W^{1,q}_{0}(B_{\tau}^{n-1}(0)\times (-\sigma,0),\mathbb{R}^{N})$. \subsubsection*{Reflection and reduction to local problem}
Here we apply the content of \cite[Sections 5.2-5.3]{bdms}: we extend $Q_{\Phi}$ and $\mathcal{J}_{\Phi}$ to $B_{\tau}^{n-1}(0)\times (0,\sigma)$ by even reflection - observe that both the extensions are Lipschitz continuous on $B_{\tau}^{n-1}(0)\times (-\sigma,\sigma)$. In particular, since $\mathcal{J}_{\Phi}$ is smooth on $B_{\tau}^{n-1}(0)\times (-\sigma,0]$, its extension is smooth on $B_{\tau}^{n-1}(0)\times [0,\sigma)$ and all the horizontal derivatives of the extended Jacobian are continuous across $B_{\tau}^{n-1}(0)\times \{0\}$. Moreover, up to further reduce $\tau$ or $\sigma$, we have
\eqn{5.11}
$$
 \left\{
\begin{array}{c}
\displaystyle 
 \ \snr{z}_{Q_{\Phi(y)}}\approx \snr{z}\quad \mbox{for all} \ \ z\in \mathbb{R}^{N\times n} \ \ \mbox{and any} \ \ y\in B_{\tau}^{n-1}(0)\times (-\sigma,\sigma)\\[10pt]\displaystyle
 \ \frac{1}{2}\le \mathcal{J}_{\Phi}(y)\le 2\quad \mbox{for any} \ \ y\in B_{\tau}^{n-1}(0)\times (-\sigma,\sigma),
 \end{array}
\right.
$$
with constants implicit in "$\approx$" depending at most on $n$, and we can find an absolute, finite constant $\hat{L}>0$ bounding the Lipschitz constants of $Q_{\Phi}$ and of $\mathcal{J}_{\Phi}$ on $B_{\tau}^{n-1}(0)\times (-\sigma,\sigma)$. We further extend $\hat{u}$ by odd reflection, pick a test function $\hat{\varphi}\in W^{1,q}_{0}(B_{\tau}^{n-1}(0)\times (-\sigma,\sigma),\mathbb{R}^{N})$, split it as the sum of even part and odd part, i.e. $\hat{\varphi}=\hat{\varphi}_{e}+\hat{\varphi}_{o}$ and manipulate \eqref{5.9} as done in \cite[Section 5.2]{bdms} to get
\begin{flalign}\label{5.12}
\int_{B_{\tau}^{n-1}(0)\times (-\sigma,\sigma)}&\ti{a}_{\delta\varepsilon}(\hat{\ell}_{\delta}(D\hat{u}))Q_{\Phi}(D\hat{u},D\hat{\varphi})\mathcal{J}_{\Phi}\dy\nonumber \\
=&2\int_{B_{\tau}^{n-1}(0)\times (-\sigma,0)}\ti{a}_{\delta\varepsilon}(\hat{\ell}_{\delta}(D\hat{u}))Q_{\Phi}(D\hat{u},D\hat{\varphi}_{o})\mathcal{J}_{\Phi}\dy=0,
\end{flalign}
where we used that $\hat{\varphi}_{0}\equiv 0$ on $B_{\tau}^{n-1}(0)\times \{0\}$, and this makes it an admissible test function in \eqref{5.9}. Combining \eqref{f.2.1} and \eqref{5.11}, we see that the nonlinear map $\left(B_{\tau}^{n-1}(0)\times (-\sigma,\sigma)\right)\times \mathbb{R}^{N\times n}\mapsto \ti{a}_{\delta\varepsilon}(\hat{\ell}_{\delta}(z))\mathcal{J}_{\Phi}(y)$ fits into those covered by the main theorem in \cite{tk}\footnote{In \cite{tk}, $C^{1}$-regularity is assumed with respect to the $y$-variable and \eqref{f.2.1.1} holds for all $t\ge 0$. However, as revealed by a quick inspection of the proof, Lipschitz continuity in the space depending coefficients suffices and \eqref{f.2.1.1} is not necessary to obtain gradient boundedness. This means that the validity of \eqref{f.2.1.1} only on closed subintervals of $[0,\infty)$ is enough to prove gradient H\"older continuity.}, so $\hat{u}\in C^{1,\beta_{0}}_{\loc}(B_{\tau}^{n-1}(0)\times (-\sigma,\sigma),\mathbb{R}^{N})\cap W^{2,2}_{\loc}(B_{\tau}^{n-1}(0)\times (-\sigma,\sigma),\mathbb{R}^{N})$, which in particular yields the $C^{1,\beta_{0}}$-regularity of $u_{\delta\varepsilon}$ in a neighborhood of $\partial \Omega_{\varepsilon}\cap B_{\rr}(x_{0})$. The interior regularity follows by analogous means as the right-hand side term $f_{\varepsilon}$ is bounded. We can then conclude that $u_{\delta\varepsilon}\in C^{1,\beta_{0}}(\bar{\Omega}_{\varepsilon}\cap B_{\rr}(x_{0}),\mathbb{R}^{N})$ for some $\beta_{0}\in (0,1)$. We can then differentiate system \eqref{5.12} in direction $e_{i}$, $i\in \{1,\cdots,n-1\}$ to prove that $\ti{v}^{\beta}:=\partial_{y_{i}}\hat{u}^{\beta}$ is a weak solution to the linear elliptic system in divergence form
$$
-\sum_{\beta=1}^{N}\sum_{j,k=1}^{n}\partial_{y_{j}}\left(A^{\alpha\beta}_{jk}D_{k}\ti{v}^{\beta}\right)=\sum_{j=1}^{n}\partial_{y_{j}}g^{\alpha}_{j},
$$
where it is
$$
A^{\alpha\beta}_{jk}:=\left[\ti{a}_{\delta\varepsilon}(\hat{\ell}_{\delta}(D\hat{u}))(Q_{\Phi})_{jk}\delta^{\alpha\beta}+\frac{\ti{a}_{\delta\varepsilon}'(\hat{\ell}_{\delta}(D\hat{u}))}{\hat{\ell}_{\delta}(D\hat{u})}\sum_{l,m=1}^{n}(Q_{\Phi})_{jl}(Q_{\Phi})_{mk}D_{l}\hat{u}^{\alpha}D_{m}\hat{u}^{\beta}\right]\mathcal{J}_{\Phi}
$$
and
\begin{eqnarray*}
g^{\alpha}_{j}&:=&\sum_{k,l,m=1}^{n}\sum_{\gamma=1}^{N}\frac{\ti{a}'_{\delta\varepsilon}(\hat{\ell}_{\delta}(D\hat{u}))}{2\hat{\ell}_{\delta}(D\hat{u})}\partial_{y_{i}}(Q_{\Phi})_{lm}D_{l}\hat{u}^{\gamma}D_{m}\hat{u}^{\gamma}(Q_{\Phi})_{jk}D_{k}\hat{u}^{\alpha}\mathcal{J}_{\Phi}\nonumber \\
&&+\sum_{k=1}^{n}\ti{a}_{\delta\varepsilon}(\hat{\ell}_{\delta}(D\hat{u}))\partial_{y_{i}}\left((Q_{\Phi})_{jk}\mathcal{J}_{\Phi}\right)D_{k}\hat{u}^{\alpha}.
\end{eqnarray*}
Notice that differentiation in the $e_{n}$ direction is forbidden as $\partial_{y_{n}}\mathcal{J}_{\Phi}$ and $\partial_{y_{n}}Q_{\Phi}$ may be discontinuous across $B_{\tau}^{n-1}(0)\times\{0\}$, while the horizontal derivatives $\partial_{y_{i}}$, $i\in \{1,\cdots,n\}$ are Lipschitz continuous on $B_{\tau}^{n-1}(0)\times (-\sigma,\sigma)$. On the other hand, functions $A^{\alpha\beta}_{jk}$ and $g^{\alpha}_{j}$, $j,k\in \{1,\cdots,n\}$, $\alpha,\beta\in \{1,\cdots,N\}$ are H\"older continuous, therefore by standard regularity theory on linear, elliptic systems in divergence form \cite[Theorem 5.19]{giamar} we obtain that $\ti{v}$ is $C^{1,\beta_{0}}$-regular, so $D^{2}_{jk}\hat{u}$ is H\"older continuous on $B_{\tau}^{n-1}(0)\times (-\sigma,\sigma)$ for all $j,k$ such that $j+k<2n$. We then jump back to $B_{\tau}^{n-1}(0)\times (-\sigma,0)$, rewrite system \eqref{5.12} in nondivergence form and solve it with respect to $D^{2}_{n}\hat{u}$; we obtain:
\eqn{5.13}
$$
\sum_{\beta=1}^{N}B^{\alpha\beta}D^{2}_{n}\hat{u}^{\beta}=G^{\alpha},
$$
with 
$$
B^{\alpha\beta}:=\left[\ti{a}_{\delta\varepsilon}(\hat{\ell}_{\delta}(D\hat{u}))(Q_{\Phi})_{nn}\delta^{\alpha\beta}+\frac{\ti{a}_{\delta\varepsilon}'(\hat{\ell}_{\delta}(D\hat{u}))}{\hat{\ell}_{\delta}(D\hat{u})}\sum_{jl=1}^{n}(Q_{\Phi})_{nl}(Q_{\Phi})_{jn}D_{l}\hat{u}^{\beta}D_{j}\hat{u}^{\alpha}\right]\mathcal{J}_{\Phi}
$$
and
\begin{eqnarray*}
G^{\alpha}&:=&-\sum_{j=1}^{n}\sum_{k=1}^{n-1}\partial_{y_{k}}\left(\ti{a}_{\delta\varepsilon}(\hat{\ell}_{\delta}(D\hat{u}))(Q_{\Phi})_{jk}D_{j}\hat{u}^{\alpha}\mathcal{J}_{\Phi}\right)\nonumber \\
&&-\sum_{j=1}^{n}\ti{a}_{\delta\varepsilon}(\hat{\ell}_{\delta}(D\hat{u}))D_{j}\hat{u}^{\alpha}\partial_{y_{n}}\left((Q_{\Phi})_{jn}\mathcal{J}_{\Phi}\right)-\sum_{j=1}^{n-1}\ti{a}_{\delta\varepsilon}(\hat{\ell}_{\delta}(D\hat{u}))(Q_{\Phi})_{jn}D^{2}_{j n}\hat{u}^{\alpha}\mathcal{J}_{\Phi}\nonumber \\
&&-\sum_{\beta=1}^{N}\sum_{j,l,m=1}^{n}\frac{\ti{a}_{\delta\varepsilon}'(\hat{\ell}_{\delta}(D\hat{u}))}{2\hat{\ell}_{\delta}(D\hat{u})}(Q_{\Phi})_{jn}\partial_{y_{n}}(Q_{\Phi})_{lm}D_{m}\hat{u}^{\beta}D_{l}\hat{u}^{\beta}D_{j}\hat{u}^{\alpha}\mathcal{J}_{\Phi}\nonumber \\
&&-\sum_{\beta=1}^{N}\sum_{j,l=1}^{n}\sum_{m=1}^{n-1}\frac{\ti{a}_{\delta\varepsilon}'(\hat{\ell}_{\delta}(D\hat{u}))}{\hat{\ell}_{\delta}(D\hat{u})}(Q_{\Phi})_{jn}(Q_{\Phi})_{lm}D^{2}_{nm}\hat{u}^{\beta}D_{l}\hat{u}^{\beta}D_{j}\hat{u}^{\alpha}\mathcal{J}_{\Phi}.
\end{eqnarray*}
Thanks to what we proved so far, the matrix $B$ is H\"older continuous and positive definite and the vector field $G$ is H\"older continuous on the closure of $B_{\tau}^{n-1}(0)\times (-\sigma,0)$, so the linear system \eqref{5.13} can be solved and $D^{2}_{n}\hat{u}$ is H\"older continuous up to $B_{\tau}^{n-1}(0)\times \{0\}$ as well. To summarize, we have just proven that $\hat{u}\in C^{2,\beta_{0}}$-regular on $B_{\tau}^{n-1}(0)\times (-\sigma,0]$ for some $\alpha\in (0,1)$. Transforming back on $\Omega_{\varepsilon}\cap B_{\rr}(x_{0})$ we deduce that $u_{\delta\varepsilon}\in C^{2}(\bar{\Omega}_{\varepsilon}\cap B_{\rr}(x_{0}),\mathbb{R}^{N})$, so in particular $u_{\delta\varepsilon}$ is a classical solution to system \eqref{3.1}. At this stage, classical Schauder type arguments \cite[Section 5.4.5]{giamar} yield that $u_{\delta\varepsilon}\in C^{3}(\bar{\Omega}_{\varepsilon}\cap B_{\rr}(x_{0}),\mathbb{R}^{N})$, and Propositions \ref{p4.1}-\ref{p4.2} are applicable.
\subsection{Convergence and proof of Theorem \ref{t1}} Before proceeding with the analysis of convergence for the sequence of approximating minimizers $\{u_{\delta\varepsilon}\}$, let us point out some useful facts. First, within the setting designed in Section \ref{ri}, standard properties of mollifiers and the fact that we extended $u\equiv 0$ in $\mathbb{R}^{n}\setminus \Omega$ yield that
$$
\nr{D\ti{u}_{\varepsilon}}_{L^{\infty}(\Omega_{\varepsilon}\cap B_{\rr}(x_{0}))}\le \left(\omega_{n}^{1/n}\texttt{d}_{\varepsilon}/4\right)^{-n/p}\nr{Du}_{L^{p}(\Omega\cap B_{\rr+\texttt{d}_{\varepsilon}/4}(x_{0}))},
$$
so we observe:
\begin{eqnarray}\label{5.14}
\texttt{d}_{\varepsilon}^{\frac{2n}{n-1}}\nr{\ell_{\mu_{\delta}}(D\ti{u}_{\varepsilon})}_{L^{q}(\Omega_{\varepsilon}\cap B_{\rr}(x_{0}))}^{q}&\le&2^{q+2n(q-p)/p}\omega_{n}^{(p-q)/p}\texttt{d}_{\varepsilon}^{\frac{2n}{n-1}-\frac{n(q-p)}{p}}\nr{Du}_{L^{p}(\Omega\cap B_{\rr+\texttt{d}_{\varepsilon}/4}(x_{0}))}^{q}\nonumber \\
&&+2^{q+2}\texttt{d}_{\varepsilon}^{\frac{2n}{n-1}}\snr{\Omega_{\varepsilon}\cap B_{\rr+\texttt{d}_{\varepsilon}/4}(x_{0})}\stackrel{\eqref{pq},\eqref{3.8.2}_{1}}{=}\texttt{o}(\varepsilon).
\end{eqnarray}
Moreover, recalling \eqref{3.8.2}$_{1}$, that $\ti{u}_{\varepsilon}\in W^{1,q}(\Omega_{\varepsilon}\cap B_{\rr}(x_{0}),\mathbb{R}^{N})$, Remark \ref{rf}, \cite[Lemma 2.1]{bdms} and \cite[(12.5)$_{2}$ of Lemma 12.2]{demi1}, we have
\begin{flalign}\label{5.15}
 \left\{
\begin{array}{c}
\displaystyle 
\ \nr{F_{\delta}(D\ti{u}_{\varepsilon})-F(D\ti{u}_{\varepsilon})}_{L^{1}(\Omega_{\varepsilon}\cap B_{\rr}(x_{0}))}=\texttt{o}_{\varepsilon}(\delta),\qquad \qquad \nr{F(D\ti{u}_{\varepsilon})-F(Du)}_{L^{1}(\Omega_{\varepsilon}\cap B_{\rr}(x_{0}))}=\texttt{o}(\varepsilon)\\[10pt]\displaystyle
 \ \nr{f_{\varepsilon}-f}_{L^{n}(\Omega_{\varepsilon}\cap B_{\rr}(x_{0}))}=\texttt{o}(\varepsilon),\qquad\qquad \nr{f_{\varepsilon}}_{L^{n}(\Omega_{\varepsilon}\cap B_{\rr}(x_{0}))}\le \nr{f}_{L^{n}(\Omega\cap B_{\rr+\texttt{d}_{\varepsilon}/4}(x_{0}))}\\[10pt]\displaystyle
 \ \nr{\ti{u}_{\varepsilon}}_{W^{1,p}(\Omega_{\varepsilon}\cap B_{\rr}(x_{0}))}\le \nr{u}_{W^{1,p}(\Omega\cap B_{\rr+\texttt{d}_{\varepsilon}/4}(x_{0}))},\qquad \qquad [f_{\varepsilon}]_{n,1;\Omega_{\varepsilon}\cap B_{\rr}(x_{0})}\le [f]_{n,1;\Omega\cap B_{\rr+\texttt{d}_{\varepsilon}/4}(x_{0})}.
 \end{array}
\right.
\end{flalign}
Next, we fix $\varepsilon>0$ and use H\"older, Young and Sobolev-Poincar\'e inequalities, \eqref{2.2}, \eqref{pk}, $\eqref{5.1}_{1}$, \eqref{5.14}-\eqref{5.15}, the minimality of $u_{\delta\varepsilon}$ in Dirichlet class $\ti{u}_{\varepsilon}+W^{1,q}_{0}(\Omega_{\varepsilon}\cap B_{\rr}(x_{0}),\mathbb{R}^{N})$ and the absolute continuity of the Lebesgue integral to control
\begin{flalign*}
&\frac{1}{L}\nr{\ell_{\mu_{\delta}}(Du_{\delta\varepsilon})}_{L^{p}(\Omega_{\varepsilon}\cap B_{\rr}(x_{0}))}^{p}+\texttt{d}_{\varepsilon}^{\frac{2n}{n-1}}\nr{\ell_{\mu_{\delta}}(Du_{\delta\varepsilon})}_{L^{q}(\Omega_{\varepsilon}\cap B_{\rr}(x_{0}))}^{q}\nonumber \\
&\qquad \qquad \qquad \le\mathcal{F}_{\delta\varepsilon}(u_{\delta\varepsilon};\Omega_{\varepsilon}\cap B_{\rr}(x_{0}))+c\nr{f}_{L^{n}(\Omega\cap B_{\rr+\texttt{d}_{\varepsilon}/4}(x_{0}))}\nr{u}_{W^{1,p}(\Omega\cap B_{\rr+\texttt{d}_{\varepsilon}/4}(x_{0}))}\nonumber \\
&\qquad \qquad \qquad \qquad +c\nr{f}_{L^{n}(\Omega\cap B_{\rr+\texttt{d}_{\varepsilon}/4}(x_{0}))}\nr{Du_{\delta\varepsilon}}_{L^{p}(\Omega_{\varepsilon}\cap B_{\rr}(x_{0}))}+c\nonumber \\
&\qquad\qquad\qquad   \le\mathcal{F}(\ti{u}_{\varepsilon};\Omega_{\varepsilon}\cap B_{\rr}(x_{0}))+c\nr{f_{\varepsilon}-f}_{L^{n}(\Omega_{\varepsilon}\cap B_{\rr}(x_{0}))}\nr{\ti{u}}_{W^{1,p}(\Omega\cap B_{\rr+\texttt{d}_{\varepsilon}/4}(x_{0}))}+c\nonumber \\
&\qquad \qquad \qquad \qquad +\nr{F_{\delta}(D\ti{u}_{\varepsilon})-F(D\ti{u}_{\varepsilon})}_{L^{1}(\Omega_{\varepsilon}\cap B_{\rr}(x_{0}))}+\frac{1}{2L}\nr{\ell_{\mu_{\delta}}(Du_{\delta\varepsilon})}_{L^{p}(\Omega_{\varepsilon}\cap B_{\rr}(x_{0}))}^{p}+\texttt{o}(\varepsilon)\nonumber \\
&\qquad \qquad \qquad\qquad +c\nr{u}_{W^{1,p}(\Omega\cap B_{\rr+\texttt{d}_{\varepsilon}/4}(x_{0}))}^{p}+c\nr{f}_{L^{n}(\Omega\cap B_{\rr+\texttt{d}_{\varepsilon}/4}(x_{0}))}^{\frac{p}{p-1}}\nonumber \\
&\qquad \qquad\qquad  \le\frac{1}{2L}\nr{\ell_{\mu_{\delta}}(Du_{\delta\varepsilon})}_{L^{p}(\Omega_{\varepsilon}\cap B_{\rr}(x_{0}))}^{p}+\mathcal{F}(u;\Omega\cap B_{\rr}(x_{0}))+\texttt{o}_{\varepsilon}(\delta)+\texttt{o}(\varepsilon)\nonumber \\
&\qquad \qquad \qquad \qquad +c\nr{u}_{W^{1,p}(\Omega\cap B_{\rr}(x_{0}))}^{p}+c\nr{f}_{L^{n}(\Omega\cap B_{\rr}(x_{0}))}^{\frac{p}{p-1}}+c,
 \end{flalign*}
 for $c\equiv c(n,N,\Omega,\Lambda,p,q)$. Reabsorbing terms in the previous display, we conclude with
 \begin{flalign}\label{5.17}
 &\nr{\ell_{\mu_{\delta}}(Du_{\delta\varepsilon})}_{L^{p}(\Omega_{\varepsilon}\cap B_{\rr}(x_{0}))}^{p}+\texttt{d}_{\varepsilon}^{\frac{2n}{n-1}}\nr{\ell_{\mu_{\delta}}(Du_{\delta\varepsilon})}_{L^{q}(\Omega_{\varepsilon}\cap B_{\rr}(x_{0}))}^{q}\nonumber \\
 &\qquad \qquad \qquad \le c\mathcal{F}(u;\Omega\cap B_{\rr}(x_{0}))+c\nr{u}_{W^{1,p}(\Omega\cap B_{\rr}(x_{0}))}^{p}+c\nr{f}_{L^{n}(\Omega\cap B_{\rr}(x_{0}))}^{\frac{p}{p-1}}+c+\texttt{o}_{\varepsilon}(\delta)+\texttt{o}(\varepsilon)\nonumber \\
 &\qquad \qquad \qquad=:\mf{F}+\texttt{o}_{\varepsilon}(\delta)+\texttt{o}(\varepsilon),
 \end{flalign}
 with $c\equiv c(n,N,\Omega,\Lambda,p,q)$, which means that for fixed $\varepsilon$, sequence $\{u_{\delta\varepsilon}\}$ is uniformly bounded in $W^{1,q}(\Omega_{\varepsilon}\cap B_{\rr}(x_{0}),\mathbb{R}^{N})$, therefore, up to extract a (nonrelabelled) subsequence, we deduce that 
 \eqn{5.18}
$$
 u_{\delta\varepsilon}\rightharpoonup_{\delta\to 0} u_{\varepsilon}\ \  \mbox{weakly in} \ \ W^{1,q}(\Omega_{\varepsilon}\cap B_{\rr}(x_{0}),\mathbb{R}^{N})\qquad \mbox{and}\qquad \left.u_{\varepsilon}\right|_{\partial(\Omega_{\varepsilon}\cap B_{\rr}(x_{0}))}=\left.\ti{u}_{\varepsilon}\right|_{\partial(\Omega_{\varepsilon}\cap B_{\rr}(x_{0}))}.
 $$
We pick any $0< \sigma<\rr$, by \eqref{lip.1} with $\sigma_{2}\equiv \sigma$ and $\sigma_{1}\equiv \rr$, \eqref{5.17} and $\eqref{5.15}_{3}$ it is
\begin{eqnarray}\label{5.19}
\nr{Du_{\delta\varepsilon}}_{L^{\infty}(\Omega_{\varepsilon}\cap B_{\sigma}(x_{0}))}&\le& \frac{c}{(\rr-\sigma)^{a_{1}}}\left(\nr{Du_{\delta\varepsilon}}_{L^{p}(\Omega_{\varepsilon}\cap B_{\rr}(x_{0}))}^{d_{1}}+1\right)+\frac{c}{(\rr-\sigma)^{a_{2}}}[f_{\varepsilon}]_{n,1;\Omega_{\varepsilon}\cap B_{\rr}(x_{0})}^{d_{2}}\nonumber \\
&\le&\frac{c}{(\rr-\sigma)^{a_{1}}}\left(\mf{F}+\texttt{o}_{\varepsilon}(\delta)+\texttt{o}(\varepsilon)\right)^{d_{1}}+\frac{c}{(\rr-\sigma)^{a_{2}}}[f]_{n,1;\Omega\cap B_{\rr+\texttt{d}_{\varepsilon}/4}(x_{0})}^{d_{2}}\nonumber \\
&=:&c\mathcal{M}_{\delta\varepsilon}(\sigma,\rr),
\end{eqnarray}
which means that, up to subsequences,
\begin{flalign}\label{5.20}
u_{\delta\varepsilon}\rightharpoonup^{*} u_{\varepsilon} \ \ \mbox{weakly$^{*}$ in} \ \ W^{1,\infty}(\Omega_{\varepsilon}\cap B_{\sigma}(x_{0}),\mathbb{R}^{N}),\qquad \qquad \left.u_{\varepsilon}\right|_{\partial(\Omega_{\varepsilon}\cap B_{\rr}(x_{0}))}=\left.\ti{u}_{\varepsilon}\right|_{\partial(\Omega_{\varepsilon}\cap B_{\rr}(x_{0}))},
\end{flalign}
and, by weak$^{*}$-lower semicontinuity, we can send $\delta\to 0$ in \eqref{5.19} to get
\begin{flalign}\label{5.19.1}
\nr{Du_{\varepsilon}}_{L^{\infty}(\Omega_{\varepsilon}\cap B_{\sigma}(x_{0}))}\le\frac{c}{(\rr-\sigma)^{a_{1}}}\left(\mf{F}+\texttt{o}(\varepsilon)\right)^{d_{1}}+\frac{c}{(\rr-\sigma)^{a_{2}}}[f]_{n,1;\Omega\cap B_{\rr+\texttt{d}_{\varepsilon}/4}(x_{0})}^{d_{2}}=:c\mathcal{M}_{\varepsilon}(\sigma,\rr).
\end{flalign}
By \eqref{5.2} and \eqref{5.19} we deduce that $\nr{F_{\delta}(Du_{\delta\varepsilon})-F(Du_{\delta\varepsilon})}_{L^{1}(\Omega_{\varepsilon}\cap B_{\sigma}(x_{0}))}=\texttt{o}_{\varepsilon}(\delta)$,
therefore, by weak lower semicontinuity we have
\begin{eqnarray}\label{5.21.2}
\mathcal{F}(u_{\varepsilon};\Omega_{\varepsilon}\cap B_{\sigma}(x_{0}))&\stackrel{\eqref{5.20}_{1}}{\le}&\liminf_{\delta\to 0}\left\{\int_{\Omega_{\varepsilon}\cap B_{\sigma}(x_{0})}\left[F_{\delta}(Du_{\delta\varepsilon})-f_{\varepsilon}\cdot u_{\delta\varepsilon}\right]\dx+\texttt{o}_{\varepsilon}(\delta)\right\}\nonumber \\
&\stackrel{\eqref{5.1}_{1}}{\le} &\limsup_{\delta\to 0}\left\{\int_{\Omega_{\varepsilon}\cap B_{\rr}(x_{0})}\left[F_{\delta}(Du_{\delta\varepsilon})-f_{\varepsilon}\cdot u_{\delta\varepsilon}\right]\dx\right\}\nonumber \\
&&+L\mu^{p}\snr{\Omega_{\varepsilon}\cap (B_{\rr}(x_{0})\setminus B_{\sigma}(x_{0}))}\nonumber \\
&&+\limsup_{\delta\to 0}\int_{\Omega_{\varepsilon}\cap (B_{\rr}(x_{0})\setminus B_{\sigma}(x_{0}))}f_{\varepsilon}\cdot u_{\delta\varepsilon}\dx\nonumber \\
&\stackrel{\eqref{3.8.2}_{1}}{\le}&\limsup_{\delta\to 0}\mathcal{F}_{\delta\varepsilon}(u_{\delta\varepsilon};\Omega_{\varepsilon}\cap B_{\rr}(x_{0}))+\texttt{o}(\varepsilon;\rr-\sigma)\nonumber \\
&&+c\nr{f_{\varepsilon}}_{L^{n}(\Omega_{\varepsilon}\cap (B_{\rr}(x_{0})\setminus B_{\sigma}(x_{0})))}\nr{u}_{W^{1,p}(\Omega\cap B_{\rr+\texttt{d}_{\varepsilon}/4}(x_{0})}\nonumber \\
&&+c\limsup_{\delta\to 0}\nr{f_{\varepsilon}}_{L^{n}(\Omega_{\varepsilon}\cap (B_{\rr}(x_{0})\setminus B_{\sigma}(x_{0})))}\nr{Du_{\delta\varepsilon}-D\ti{u}_{\varepsilon}}_{L^{p}(\Omega_{\varepsilon}\cap B_{\rr}(x_{0}))}\nonumber \\
&\stackrel{\eqref{5.17}}{\le}&\limsup_{\delta\to 0}\mathcal{F}_{\delta\varepsilon}(\ti{u}_{\varepsilon};\Omega_{\varepsilon}\cap B_{\rr}(x_{0}))+\texttt{o}(\varepsilon;\rr-\sigma)\nonumber \\
&&+c\nr{f_{\varepsilon}}_{L^{n}(\Omega_{\varepsilon}\cap (B_{\rr}(x_{0})\setminus B_{\sigma}(x_{0})))}\nr{u}_{W^{1,p}(\Omega\cap B_{\rr+\texttt{d}_{\varepsilon}/4}(x_{0})}\nonumber \\
&&+c\nr{f_{\varepsilon}}_{L^{n}(\Omega_{\varepsilon}\cap (B_{\rr}(x_{0})\setminus B_{\sigma}(x_{0})))}\left(\mf{F}+\texttt{o}(\varepsilon)\right)\nonumber \\
&\stackrel{\eqref{5.14},\eqref{5.15}_{1,2}}{\le}&\texttt{o}(\varepsilon)+\texttt{o}(\varepsilon;\rr-\sigma)+\mathcal{F}(u;\Omega\cap B_{\rr}(x_{0})).
\end{eqnarray}
At this stage, we send $\sigma\to \rr$ in the previous display to conclude with
\begin{flalign}\label{5.21.1}
\mathcal{F}(u_{\varepsilon};\Omega_{\varepsilon}\cap B_{\rr}(x_{0}))\le \mathcal{F}(u;\Omega\cap B_{\rr}(x_{0}))+\texttt{o}(\varepsilon)
\end{flalign}
which in turn yields
\begin{flalign}\label{5.21}
\int_{\Omega\cap B_{\rr}(x_{0})}\ell_{\mu}(Du_{\varepsilon})^{p}\dx&+\int_{\Omega\cap B_{\rr}(x_{0})}F(Du_{\varepsilon})\dx\nonumber \\
&\le c\mathcal{F}(u;\Omega\cap B_{\rr}(x_{0}))+c\nr{f}_{L^{n}(\Omega\cap B_{\rr}(x_{0}))}^{\frac{p}{p-1}}+c\nr{u}_{W^{1,p}(\Omega\cap B_{\rr+\texttt{d}_{\varepsilon}/4}(x_{0}))}^{p}+\texttt{o}(\varepsilon),
\end{flalign}
where we also used $\eqref{5.20}_{2}$, \eqref{f.1}$_{1}$, Sobolev-Poincar\'e and Young inequalities and it is $c\equiv c(n,\Omega,L,p)$. From \eqref{3.8.2}$_{1}$, \eqref{5.21}, \eqref{5.19.1} and the weak continuity of trace operator, we have (up to subsequences) that,
\begin{flalign}\label{5.22}
\begin{cases}
\ u_{\varepsilon}\rightharpoonup \ti{u} \ \ \mbox{weakly in} \ \ W^{1,p}(\Omega\cap B_{\rr}(x_{0}))\\
\ u_{\varepsilon}\rightharpoonup^{*} \ti{u} \ \ \mbox{weakly in} \ \ W^{1,\infty}(\Omega\cap B_{\sigma}(x_{0})) \ \ \mbox{for all} \ \ \sigma\in (0,\rr)\\
\ \left.\ti{u}\right|_{\partial(\Omega\cap B_{\rr}(x_{0}))}=\left.u\right|_{\partial(\Omega\cap B_{\rr}(x_{0}))}.
\end{cases}
\end{flalign}
By \eqref{5.22}$_{2}$ we can pass to the limit as $\varepsilon\to 0$ in \eqref{5.19.1} to get that
\begin{flalign}\label{5.23}
\nr{D\ti{u}}_{L^{\infty}(\Omega\cap B_{\sigma}(x_{0}))}\le c(\rr-\sigma)^{-a_{1}}\mf{F}^{d_{1}}+c(\rr-\sigma)^{-a_{2}}[f]_{n,1;\Omega\cap B_{\rr}(x_{0})}^{d_{2}}=:\mathcal{M}(\sigma,\rr),
\end{flalign}
for all $\sigma\in (0,\rr)$. Moreover, combining $\eqref{5.22}_{1}$ with weak lower semicontinuity in \eqref{5.21.1} we obtain
\begin{flalign}\label{5.26}
\mathcal{F}(\ti{u};\Omega\cap B_{\rr}(x_{0}))\le \liminf_{\varepsilon\to 0}\left\{\mathcal{F}(u;\Omega\cap B_{\rr}(x_{0}))+\texttt{o}(\varepsilon)\right\}\le \mathcal{F}(u;\Omega\cap B_{\rr}(x_{0})).
\end{flalign}
The strict convexity of $F(\cdot)$ implied by $\eqref{f.1}_{2}$, the linearity of term $w\mapsto \int f\cdot w\dx$, \eqref{5.26} and \eqref{5.22}$_{3}$ yield that $\ti{u}=u$ a.e. on $\Omega\cap B_{\rr}(x_{0})$ and \eqref{lipfin} follows from \eqref{5.23}. 
\subsection{Uniform $L^{\infty}$-bounds and proof of Theorem \ref{t2}} Thanks to the construction laid down in Section \ref{ri}, we see that the results in Proposition \ref{p4.2} apply, so $u_{\delta\varepsilon}\in W^{2,2}(\Omega_{\varepsilon}\cap B_{\sigma}(x_{0}),\mathbb{R}^{N})$ for all $0<\sigma<\rr$. Coherently with the content of Section \ref{hd}, we shall separately treat the singular and the degenerate case.
\subsubsection*{Case $1<p\le 2$} Recalling $\eqref{dvp}_{1}$, 
we fix $\varepsilon>0$, $0<\sigma<\rr$ and combine \eqref{3.8.2}$_{1}$, $\eqref{5.15}_{3}$, \eqref{5.17}, \eqref{5.19} and \eqref{hd1} to get
\begin{flalign}\label{5.26.1}
&\nr{D^{2}u_{\delta\varepsilon}}_{L^{2}(\Omega\cap B_{\sigma}(x_{0}))}+\nr{DV_{\mu_{\delta}}(Du_{\delta\varepsilon})}_{L^{2}(\Omega\cap B_{\sigma}(x_{0}))}\nonumber \\
&\qquad \qquad \qquad \qquad \le c\left(1+\nr{Du_{\delta\varepsilon}}_{L^{\infty}(\Omega_{\varepsilon}\cap B_{\rr}(x_{0}))}^{(2-p)/2}\right)\nr{DV_{\mu_{\delta}}(Du_{\delta\varepsilon})}_{L^{2}(\Omega_{\varepsilon}\cap B_{\sigma}(x_{0}))}\nonumber \\
&\qquad \qquad \qquad\qquad  \le c(\rr-\sigma)^{-a}\left(\mf{F}^{d_{1}}+\texttt{o}_{\varepsilon}(\delta)+\texttt{o}(\varepsilon)\right)\left([f]_{n,1;\Omega\cap B_{\rr+\texttt{d}_{\varepsilon}/4}(x_{0})}^{d_{2}}+1\right),
\end{flalign}
for $a,d_{1},d_{1}\equiv a,d_{1},d_{2}(n,p,q)$ and exponents $d_{1},d_{2}$ are not necessarily the same as those appearing in \eqref{hd1} and \eqref{5.19} and will possibly vary from line to line and it is $c\equiv c(n,N,\Omega,\Lambda,p,q)$. Estimates \eqref{5.26.1}, \eqref{5.20} and Rellich-Kondrachev theorem render that $u_{\delta\varepsilon}\rightharpoonup_{\delta\to 0} u_{\varepsilon}$ weakly in $W^{2,2}(\Omega\cap B_{\sigma}(x_{0}),\mathbb{R}^{N})$, $Du_{\delta\varepsilon}\to_{\delta\to 0} Du_{\varepsilon}$ strongly in $L^{2}(\Omega\cap B_{\sigma}(x_{0}),\mathbb{R}^{N\times n})$ and, via a variant of the dominated convergence theorem, also that $V_{\mu_{\delta}}(Du_{\delta\varepsilon})\to_{\delta\to 0}V_{\mu}(Du_{\varepsilon})$ strongly in $L^{2}(\Omega\cap B_{\sigma}(x_{0}),\mathbb{R}^{N\times n})$. This means that we can send $\delta\to 0$ in \eqref{5.26.1} and use weak lower semicontinuity to get
\begin{flalign}\label{5.27}
&\nr{D^{2}u_{\varepsilon}}_{L^{2}(\Omega\cap B_{\sigma}(x_{0}))}+\nr{DV_{\mu}(Du_{\varepsilon})}_{L^{2}(\Omega\cap B_{\sigma}(x_{0}))}\nonumber \\
&\qquad \qquad \qquad\qquad  \le c(\rr-\sigma)^{-a}\left(\mf{F}^{d_{1}}+\texttt{o}(\varepsilon)\right)\left([f]_{n,1;\Omega\cap B_{\rr+\texttt{d}_{\varepsilon}/4}(x_{0})}^{d_{2}}+1\right),
\end{flalign}
which, together with \eqref{5.22} in turn implies that $u_{\varepsilon}\rightharpoonup_{\varepsilon\to 0} u$ weakly in $W^{2,2}(\Omega\cap B_{\sigma}(x_{0}),\mathbb{R}^{N})$, $Du_{\varepsilon}\to_{\varepsilon\to 0} Du$ strongly in $L^{2}(\Omega\cap B_{\sigma}(x_{0}),\mathbb{R}^{N\times n})$ and that $V_{\mu}(Du_{\varepsilon})\to_{\varepsilon\to 0}V_{\mu}(Du)$ strongly in $L^{2}(\Omega\cap B_{\sigma}(x_{0}),\mathbb{R}^{N\times n})$, so we can finally send $\varepsilon\to 0$ in \eqref{5.27} to conclude with \eqref{t2.1}.

\subsubsection*{Case $p>2$} 
As done for the singular case, we fix $\varepsilon>0$, let $0<\sigma<\rr$ and observe that by \eqref{5.1} and \eqref{5.7}, inequalities \eqref{hd2} with $r_{2}:=\sigma$ and $r_{1}=\rr$, \eqref{5.15}$_{3}$ and \eqref{5.17} apply so
\begin{flalign}\label{p.1}
\nr{DW_{\mu_{\delta}}(Du_{\delta\varepsilon})}_{L^{2}(\Omega\cap B_{\sigma}(x_{0}))}\le \frac{c}{(\rr-\sigma)^{a_{1}}}\left(\mf{F}^{d_{1}}+\texttt{o}_{\delta}(\varepsilon)+\texttt{o}(\varepsilon)\right)+\frac{c}{(\rr-\sigma)^{a_{2}}}[f]_{n,1;\Omega\cap B_{\rr+\texttt{d}_{\varepsilon}/4}(x_{0})}^{d_{2}}
\end{flalign}
holds with $c\equiv c(n,N,\Omega,\Lambda,p,q)$, $a_{1},a_{2}\equiv a_{1},a_{2}(n,p,q)$ and $d_{1},d_{2}\equiv d_{1},d_{2}(n,p,q)$. By Rellich-Kondrachov theorem we deduce that $W_{\mu_{\delta}}(Du_{\delta\varepsilon})\rightharpoonup_{\delta\to 0} W_{\varepsilon}$ weakly in $W^{1,2}(\Omega\cap B_{\sigma}(x_{0}),\mathbb{R}^{N\times n})$ and, as $\varepsilon\to 0$ in \eqref{p.1}, by weak lower semicontinuity we also have
\begin{flalign}\label{p.2}
\nr{DW_{\varepsilon}}_{L^{2}(\Omega\cap B_{\sigma}(x_{0}))}\le \frac{c}{(\rr-\sigma)^{a_{1}}}\left(\mf{F}^{d_{1}}+\texttt{o}(\varepsilon)\right)+\frac{c}{(\rr-\sigma)^{a_{2}}}[f]_{n,1;\Omega\cap B_{\rr+\texttt{d}_{\varepsilon}/4}(x_{0})}^{d_{2}},
\end{flalign}
which in turn implies that $W_{\varepsilon}\rightharpoonup W$ weakly in $W^{1,2}(\Omega\cap B_{\sigma}(x_{0}),\mathbb{R}^{N\times n})$ and, again by weak lower semicontinuity,
\begin{flalign}\label{p.3}
\nr{DW}_{L^{2}(\Omega\cap B_{\sigma}(x_{0}))}\le \frac{c\mf{F}^{d_{1}}}{(\rr-\sigma)^{a_{1}}}+\frac{c}{(\rr-\sigma)^{a_{2}}}[f]_{n,1;\Omega\cap B_{\rr}(x_{0})}^{d_{2}}
\end{flalign}
is verified for $c\equiv c(n,N,\Omega,\Lambda,p,q)$, $a_{1},a_{2}\equiv a_{1},a_{2}(n,p,q)$ and $d_{1},d_{2}\equiv d_{1},d_{2}(n,p,q)$. As a consequence of \eqref{p.1}, \eqref{p.2} and of Rellich-Kondrachov theorem we have in particular that
\begin{flalign}\label{p.0}
W_{\delta\varepsilon}(Du_{\delta\varepsilon})\to_{\delta\to 0} W_{\varepsilon}, \ \ W_{\varepsilon}\to_{\varepsilon\to 0} W\qquad \mbox{strongly in} \ \ L^{2}(\Omega\cap B_{\sigma}(x_{0}),\mathbb{R}^{N\times n}). 
\end{flalign}
Before proceeding further, let us extract from display \eqref{5.21.2} a couple of useful inequalities:
\begin{eqnarray}\label{y.0}
\mathcal{F}(u_{\varepsilon};\Omega_{\varepsilon}\cap B_{\sigma}(x_{0}))&\le&\texttt{o}(\varepsilon)+\texttt{o}(\varepsilon;\rr-\sigma)+\liminf_{\delta\to 0}\int_{\Omega_{\varepsilon}\cap B_{\rr}(x_{0})}\left[F_{\delta}(Du_{\delta\varepsilon})-f_{\varepsilon}\cdot u_{\delta\varepsilon}\right]\dx\nonumber \\
&\le&\texttt{o}(\varepsilon)+\texttt{o}(\varepsilon;\rr-\sigma)+\limsup_{\delta\to 0}\int_{\Omega_{\varepsilon}\cap B_{\rr}(x_{0})}\left[F_{\delta}(Du_{\delta\varepsilon})-f_{\varepsilon}\cdot u_{\delta\varepsilon}\right]\dx\nonumber \\
&\le&\texttt{o}(\varepsilon)+\texttt{o}(\varepsilon;\rr-\sigma)+\mathcal{F}(u;\Omega\cap B_{\rr}(x_{0}))
\end{eqnarray}
and
\begin{eqnarray}\label{y.1}
\mathcal{F}(u_{\varepsilon};\Omega_{\varepsilon}\cap B_{\sigma}(x_{0}))&\le&\texttt{o}(\varepsilon)+\texttt{o}(\varepsilon;\rr-\sigma)+\liminf_{\delta\to 0}\mathcal{F}_{\delta\varepsilon}(u_{\delta\varepsilon};\Omega_{\varepsilon}\cap B_{\rr}(x_{0}))\nonumber \\
&\le&\texttt{o}(\varepsilon)+\texttt{o}(\varepsilon;\rr-\sigma)+\limsup_{\delta\to 0}\mathcal{F}_{\delta\varepsilon}(u_{\delta\varepsilon};\Omega_{\varepsilon}\cap B_{\rr}(x_{0}))\nonumber\\
&\le&\texttt{o}(\varepsilon)+\texttt{o}(\varepsilon;\rr-\sigma)+\mathcal{F}(u;\Omega\cap B_{\rr}(x_{0})),
\end{eqnarray}
proving that for each fixed $\varepsilon>0$, the $\limsup$/$\liminf$ displayed above remain bounded. Combining the content of the two previous displays with \eqref{3.8.2}$_{1}$ and using \eqref{5.22}$_{1,2}$, weak lower semicontinuity and the absolute continuity of Lebesgue integral we obtain
\begin{eqnarray*}
 \mathcal{F}(u;\Omega\cap B_{\rr}(x_{0}))&\le&\mathcal{F}(u;\Omega\cap B_{\sigma}(x_{0}))+\mathcal{F}(u;\Omega\cap B_{\rr}(x_{0})\setminus B_{\sigma}(x_{0}))\nonumber \\
 &\le&\texttt{o}(\rr-\sigma)+\liminf_{\varepsilon\to 0}\mathcal{F}(u_{\varepsilon};\Omega_{\varepsilon}\cap B_{\sigma}(x_{0}))\nonumber \\
  &\le&\texttt{o}(\rr-\sigma)+\liminf_{\varepsilon\to 0}\liminf_{\delta\to 0}\int_{\Omega_{\varepsilon}\cap B_{\rr}(x_{0})}\left[F_{\delta}(Du_{\delta\varepsilon})-f_{\varepsilon}\cdot u_{\delta\varepsilon}\right]\dx\nonumber \\
 &\le&\texttt{o}(\rr-\sigma)+\limsup_{\varepsilon\to 0}\limsup_{\delta\to 0}\int_{\Omega_{\varepsilon}\cap B_{\rr}(x_{0})}\left[F_{\delta}(Du_{\delta\varepsilon})-f_{\varepsilon}\cdot u_{\delta\varepsilon}\right]\dx\nonumber \\
 &\le& \texttt{o}(\rr-\sigma)+\mathcal{F}(u;\Omega\cap B_{\rr}(x_{0}))
\end{eqnarray*}
and
\begin{eqnarray*}
 \mathcal{F}(u;\Omega\cap B_{\rr}(x_{0}))&\le&\mathcal{F}(u;\Omega\cap B_{\sigma}(x_{0}))+\mathcal{F}(u;\Omega\cap B_{\rr}(x_{0})\setminus B_{\sigma}(x_{0}))\nonumber \\
 &\le&\texttt{o}(\rr-\sigma)+\liminf_{\varepsilon\to 0}\mathcal{F}(u_{\varepsilon};\Omega_{\varepsilon}\cap B_{\sigma}(x_{0}))\nonumber \\
 &\le&\texttt{o}(\rr-\sigma)+\liminf_{\varepsilon\to 0}\liminf_{\delta\to 0}\mathcal{F}_{\delta\varepsilon}(u_{\delta\varepsilon};\Omega_{\varepsilon}\cap B_{\rr}(x_{0}))\nonumber \\
 &\le&\texttt{o}(\rr-\sigma)+\limsup_{\varepsilon\to 0}\limsup_{\delta\to 0}\mathcal{F}_{\delta\varepsilon}(u_{\delta\varepsilon};\Omega_{\varepsilon}\cap B_{\rr}(x_{0}))\nonumber \\
 &\le& \texttt{o}(\rr-\sigma)+\mathcal{F}(u;\Omega\cap B_{\rr}(x_{0})).
\end{eqnarray*}
Sending $\sigma\to \rr$ in the two above inequalities, we obtain
\begin{flalign}\label{5.29}
\limsup_{\varepsilon\to 0}\limsup_{\delta\to 0}&\int_{\Omega_{\varepsilon}\cap B_{\rr}(x_{0})}\left[F_{\delta}(Du_{\delta\varepsilon})-f_{\varepsilon}\cdot u_{\delta\varepsilon}\right]\dx=\limsup_{\varepsilon\to 0}\limsup_{\delta\to 0}\mathcal{F}_{\delta\varepsilon}(u_{\delta\varepsilon};\Omega_{\varepsilon}\cap B_{\rr}(x_{0}))\nonumber \\
&=\liminf_{\varepsilon\to 0}\liminf_{\delta\to 0}\int_{\Omega_{\varepsilon}\cap B_{\rr}(x_{0})}\left[F_{\delta}(Du_{\delta\varepsilon})-f_{\varepsilon}\cdot u_{\delta\varepsilon}\right]\dx=\liminf_{\varepsilon\to 0}\liminf_{\delta\to 0}\mathcal{F}_{\delta\varepsilon}(u_{\delta\varepsilon};\Omega_{\varepsilon}\cap B_{\rr}(x_{0}))\nonumber \\
&=\mathcal{F}(u;\Omega\cap B_{\rr}(x_{0})).
\end{flalign}
We then observe that \eqref{y.0}-\eqref{y.1} and \eqref{5.29} allow 
using well-known properties of the limit superior, that yield:
\begin{flalign}
&\limsup_{\varepsilon\to 0}\limsup_{\delta\to 0}\texttt{d}_{\varepsilon}^{\frac{2n}{n-1}}\int_{\Omega_{\varepsilon}\cap B_{\rr}(x_{0})}\ell_{\mu_{\delta}}(Du_{\delta\varepsilon})^{q}\dx\nonumber \\
&\qquad \qquad =\limsup_{\varepsilon\to 0}\limsup_{\delta\to 0}\left\{\mathcal{F}_{\delta\varepsilon}(Du_{\delta\varepsilon};\Omega_{\varepsilon}\cap B_{\rr}(x_{0}))-\int_{\Omega_{\varepsilon}\cap B_{\rr}(x_{0})}\left[F_{\delta}(Du_{\delta\varepsilon})-f_{\varepsilon}\cdot u_{\delta\varepsilon}\right]\dx\right\}\nonumber \\
&\qquad \qquad \le\limsup_{\varepsilon\to 0}\limsup_{\delta\to 0}\mathcal{F}_{\delta\varepsilon}(Du_{\delta\varepsilon};\Omega_{\varepsilon}\cap B_{\rr}(x_{0}))\nonumber \\
&\qquad \qquad \qquad -\liminf_{\varepsilon\to 0}\liminf_{\delta\to 0}\int_{\Omega_{\varepsilon}\cap B_{\rr}(x_{0})}\left[F_{\delta}(Du_{\delta\varepsilon})-f_{\varepsilon}\cdot u_{\delta\varepsilon}\right]\dx\stackrel{\eqref{5.29}}{=}0.\label{5.30}
\end{flalign}
Moreover, as a direct consequence of $\eqref{3.8.2}_{1}$, of the weak convergences in \eqref{5.20}, \eqref{5.22} and Rellich–Kondrachov theorem, and of $\eqref{5.15}_{2}$ we have
\begin{eqnarray}\label{5.31}
\int_{\Omega_{\varepsilon}\cap B_{\rr}(x_{0})}\snr{f_{\varepsilon}}\snr{\ti{u}_{\varepsilon}-u_{\delta\varepsilon}}\dx
=\texttt{o}(\varepsilon)+\texttt{o}_{\varepsilon}(\delta).
\end{eqnarray}
By means of \eqref{5.31} and \eqref{5.14} it is
\begin{eqnarray*}
\left| \ \int_{\Omega_{\varepsilon}\cap B_{\rr}(x_{0})}\left[F_{\delta}(Du_{\delta\varepsilon})-F_{\delta}(D\ti{u}_{\varepsilon})\right]\dx \ \right|&\le& \texttt{o}(\varepsilon)+\texttt{o}_{\varepsilon}(\delta)+\texttt{d}_{\varepsilon}^{\frac{2n}{n-1}}\int_{\Omega_{\varepsilon}\cap B_{\rr}(x_{0})}\ell_{\mu_{\delta}}(Du_{\delta\varepsilon})^{q}\dx\nonumber\\
&&+\mathcal{F}_{\delta\varepsilon}(\ti{u}_{\varepsilon};\Omega_{\varepsilon}\cap B_{\rr}(x_{0}))-\mathcal{F}_{\delta\varepsilon}(u_{\delta\varepsilon};\Omega_{\varepsilon}\cap B_{\rr}(x_{0})),
\end{eqnarray*}
where we also used the minimality of $u_{\delta\varepsilon}$ in the Dirichlet class $\ti{u}_{\varepsilon}+W^{1,q}_{0}(\Omega_{\varepsilon}\cap B_{\rr}(x_{0}),\mathbb{R}^{N})$, so we get
\begin{flalign*}
&\limsup_{\delta\to 0}\left| \ \int_{\Omega_{\varepsilon}\cap B_{\rr}(x_{0})}\left[F_{\delta}(Du_{\delta\varepsilon})-F_{\delta}(D\ti{u}_{\varepsilon})\right]\dx \ \right|\le \texttt{o}(\varepsilon)+\limsup_{\delta\to 0}\texttt{d}_{\varepsilon}^{\frac{2n}{n-1}}\int_{\Omega_{\varepsilon}\cap B_{\rr}(x_{0})}\ell_{\mu_{\delta}}(Du_{\delta\varepsilon})^{q}\dx\nonumber \\
&\qquad \qquad \qquad \qquad \qquad \qquad  +\limsup_{\delta\to 0}\left(\mathcal{F}_{\delta\varepsilon}(\ti{u}_{\varepsilon};\Omega_{\varepsilon}\cap B_{\rr}(x_{0}))-\mathcal{F}_{\delta\varepsilon}(u_{\delta\varepsilon};\Omega_{\varepsilon}\cap B_{\rr}(x_{0}))\right),
\end{flalign*}
which, together with \eqref{5.30}, \eqref{5.29}, \eqref{5.14} and \eqref{5.15}$_{1}$ eventually yield
\begin{flalign}\label{5.32}
  &\limsup_{\varepsilon\to 0}\limsup_{\delta\to 0}\left| \ \int_{\Omega_{\varepsilon}\cap B_{\rr}(x_{0})}\left[F_{\delta}(Du_{\delta\varepsilon})-F_{\delta}(D\ti{u}_{\varepsilon})\right]\dx \ \right|\nonumber \\
  &\qquad \qquad \qquad \le \limsup_{\varepsilon\to 0}\limsup_{\delta\to 0}\left(\mathcal{F}_{\delta\varepsilon}(\ti{u}_{\varepsilon};\Omega_{\varepsilon}\cap B_{\rr}(x_{0}))-\mathcal{F}_{\delta\varepsilon}(u_{\delta\varepsilon};\Omega_{\varepsilon}\cap B_{\rr}(x_{0}))\right)=0.
\end{flalign}
All the previous considerations hold for all $0<\sigma<\rr$. Next, we notice that by $\eqref{3.8.2}_{1}$ there is no loss of generality in assuming $\rr>2\texttt{d}_{\varepsilon}$, so now we let $0<\sigma<\rr-\texttt{d}_{\varepsilon}/2$ and recall that $u_{\delta\varepsilon}$ solves the integral identity \eqref{ela} and combine standard convexity/monotonicity arguments, \eqref{dvp.1}$_{1}$, $\eqref{5.1}_{2}$, \eqref{5.14} and \eqref{5.31} to get
\begin{flalign}\label{5.35}
&\mathcal{V}^{2}:=\int_{\Omega_{\varepsilon}\cap B_{\rr}(x_{0})}\snr{V_{\mu_{\delta}}(Du_{\delta\varepsilon})-V_{\mu_{\delta}}(D\ti{u}_{\varepsilon})}^{2}\dx\nonumber \\
&\quad \quad \quad \le c\int_{\Omega_{\varepsilon}\cap B_{\rr}(x_{0})}F_{\delta}(D\ti{u}_{\varepsilon})-F_{\delta}(Du_{\delta\varepsilon})-\langle \partial F_{\delta}(Du_{\delta\varepsilon}),D\ti{u}_{\varepsilon}-Du_{\delta\varepsilon}\rangle\dx\nonumber \\
&\quad \quad \quad =c\int_{\Omega_{\varepsilon}\cap B_{\rr}(x_{0})}F_{\delta}(D\ti{u}_{\varepsilon})-F_{\delta}(Du_{\delta\varepsilon})-f_{\varepsilon}\cdot (\ti{u}_{\varepsilon}-u_{\delta\varepsilon})\dx\nonumber \\
  &\quad \qquad \quad \quad +c\texttt{d}_{\varepsilon}^{\frac{2n}{n-1}}\int_{\Omega_{\varepsilon}\cap B_{\rr}(x_{0})}\ell_{\mu_{\delta}}(Du_{\delta\varepsilon})^{q-2}\langle Du_{\delta\varepsilon},D\ti{u}_{\varepsilon}-Du_{\delta\varepsilon}\rangle\dx\nonumber \\
  &\quad \quad \quad \le c\left| \ \int_{\Omega_{\varepsilon}\cap B_{\rr}(x_{0})}\left[F_{\delta}(D\ti{u}_{\varepsilon})-F_{\delta}(Du_{\delta\varepsilon})\right]\dx \ \right|+c\texttt{d}_{\varepsilon}^{\frac{2n}{n-1}}\int_{\Omega_{\varepsilon}\cap B_{\rr}(x_{0})}\ell_{\mu_{\delta}}(Du_{\delta\varepsilon})^{q}\dx+\texttt{o}_{\varepsilon}(\delta)+\texttt{o}(\varepsilon),
\end{flalign}
with $c\equiv c(n,N,\Omega,\Lambda,p,q)$. Next, using triangle inequality, \eqref{dvp.1}$_{2}$, \eqref{5.35}, \eqref{5.19} and \eqref{5.23} we estimate
\begin{flalign*}
&\int_{\Omega\cap B_{\sigma}(x_{0})}\snr{W_{\mu_{\delta}}(Du_{\delta\varepsilon})-W_{\mu}(Du)}^{2}\dx\le c\int_{\Omega\cap B_{\sigma}(x_{0})}\snr{W_{\mu_{\delta}}(Du_{\delta\varepsilon})-W_{\mu_{\delta}}(D\ti{u}_{\varepsilon})}^{2}\dx\nonumber \\
&\qquad \qquad \qquad \qquad +c\int_{\Omega\cap B_{\sigma}(x_{0})}\snr{W_{\mu_{\delta}}(D\ti{u}_{\varepsilon})-W_{\mu}(D\ti{u}_{\varepsilon})}^{2}\dx+c\int_{\Omega\cap B_{\sigma}(x_{0})}\snr{W_{\mu}(Du)-W_{\mu}(D\ti{u}_{\varepsilon})}^{2}\dx\nonumber \\
&\qquad \qquad \qquad \le c\left(1+\nr{Du_{\delta\varepsilon}}_{L^{\infty}(\Omega\cap B_{\sigma}(x_{0}))}^{p-2}+\nr{Du}_{L^{\infty}(\Omega\cap B_{\sigma+\texttt{d}_{\varepsilon}/4}(x_{0}))}^{p-2}\right)\mathcal{V}^{2}+ \texttt{o}_{\varepsilon}(\delta)\nonumber \\
&\qquad \qquad \qquad \qquad +\texttt{o}(\varepsilon)\left(1+\nr{Du}_{L^{\infty}(\Omega\cap B_{\sigma+\texttt{d}_{\varepsilon}/4}(x_{0}))}^{p-2}\right)\nonumber \\
&\qquad \qquad \qquad \le c\mathcal{M}_{\delta\varepsilon}(\sigma+\texttt{d}_{\varepsilon}/4,\rr)^{p-2}\left| \ \int_{\Omega_{\varepsilon}\cap B_{\rr}(x_{0})}\left[F_{\delta}(D\ti{u}_{\varepsilon})-F_{\delta}(Du_{\delta\varepsilon})\right]\dx \ \right|\nonumber \\
&\qquad \qquad \qquad \qquad +c\mathcal{M}_{\delta\varepsilon}(\sigma+\texttt{d}_{\varepsilon}/4,\rr)^{p-2}\texttt{d}_{\varepsilon}^{\frac{2n}{n-1}}\int_{\Omega_{\varepsilon}\cap B_{\rr}(x_{0})}\ell_{\mu_{\delta}}(Du_{\delta\varepsilon})^{q}\dx\nonumber \\
&\qquad \qquad \qquad \qquad +c\mathcal{M}_{\delta\varepsilon}(\sigma+\texttt{d}_{\varepsilon}/4,\rr)^{p-2}\left(\texttt{o}_{\varepsilon}(\delta)+\texttt{o}(\varepsilon)\right),
\end{flalign*}
for $c\equiv c(n,N,\Omega,\Lambda,p,q)$ and $\mathcal{M}_{\delta\varepsilon}(\sigma+\texttt{d}_{\varepsilon}/4,\rr)$ being defined in \eqref{5.19}. We then send $\delta\to 0$ above, exploit Fatou Lemma and the strong $L^{2}$-convergence in \eqref{p.0} to get
\begin{flalign*}
  \int_{\Omega\cap B_{\sigma}(x_{0})}\snr{W_{\varepsilon}-W_{\mu}(Du)}^{2}\dx\le& \limsup_{\delta\to 0}\int_{\Omega\cap B_{\sigma}(x_{0})}\snr{W_{\mu_{\delta}}(Du_{\delta\varepsilon})-W_{\mu}(Du)}^{2}\dx\nonumber \\
  \le &c\mathcal{M}_{\varepsilon}(\sigma+\texttt{d}_{\varepsilon}/4,\rr)^{p-2}\limsup_{\delta\to 0}\left|\ \int_{\Omega_{\varepsilon}\cap B_{\rr}(x_{0})}\left[F_{\delta}(D\ti{u}_{\varepsilon})-F_{\delta}(Du_{\delta\varepsilon})\right]\dx\ \right|\nonumber \\
  &+c\mathcal{M}_{\varepsilon}(\sigma+\texttt{d}_{\varepsilon}/4,\rr)^{p-2}\limsup_{\delta\to 0}\texttt{d}_{\varepsilon}^{\frac{2n}{n-1}}\int_{\Omega_{\varepsilon}\cap B_{\rr}(x_{0})}\ell_{\mu_{\delta}}(Du_{\delta\varepsilon})^{q}\dx\nonumber \\
  &+\texttt{o}(\varepsilon)\mathcal{M}_{\varepsilon}(\sigma+\texttt{d}_{\varepsilon}/4,\rr)^{p-2},
\end{flalign*}
for $c\equiv c(n,N,\Omega,\Lambda,p,q)$ and this time $\mathcal{M}_{\varepsilon}(\sigma+\texttt{d}_{\varepsilon}/4,\rr)$ is defined in \eqref{5.19.1}. Keeping this and \eqref{p.0}$_{2}$ in mind, we can apply Fatou lemma, pass to the limit as $\varepsilon\to 0$ in the previous display and use \eqref{5.30}-\eqref{5.32} to deduce that
\begin{flalign*}
\int_{\Omega\cap B_{\sigma}(x_{0})}\snr{W-W_{\mu}(Du)}^{2}\dx\le& \limsup_{\varepsilon\to 0}\int_{\Omega\cap B_{\sigma}(x_{0})}\snr{W_{\varepsilon}-W_{\mu}(Du)}^{2}\dx\nonumber \\
\le &c\mathcal{M}(\sigma,\rr)^{p-2}\limsup_{\varepsilon\to 0}\limsup_{\delta\to 0}\left|\ \int_{\Omega_{\varepsilon}\cap B_{\rr}(x_{0})}\left[F_{\delta}(D\ti{u}_{\varepsilon})-F_{\delta}(Du_{\delta\varepsilon})\right]\dx\ \right|\nonumber \\
&+c\mathcal{M}(\sigma,\rr)^{p-2}\limsup_{\varepsilon\to 0}\limsup_{\delta\to 0}\texttt{d}_{\varepsilon}^{\frac{2n}{n-1}}\int_{\Omega_{\varepsilon}\cap B_{\rr}(x_{0})}\ell_{\mu_{\delta}}(Du_{\delta\varepsilon})^{q}\dx=0,
\end{flalign*}
therefore $W=W_{\mu}(Du)$ a.e. in $\Omega\cap B_{\sigma}(x_{0})$ for all $\sigma\in (0,\rr)$ and \eqref{p.3} yields \eqref{t2.2}. Finally, from \eqref{dvp}$_{2}$ we obtain that if $\mu>0$ in \eqref{f.1}, then \eqref{t2.2} also implies that $u\in W^{2,2}(\Omega\cap B_{\sigma}(x_{0}),\mathbb{R}^{N})$ for all $0<\sigma<\rr$, with bounding constants in \eqref{t2.3} depending also on $\mu$. The proof is complete.
  
\end{document}